\newtheorem{definition}{Definition}[section]
\newtheorem{theorem}{Theorem}[section]
\newtheorem{proposition}{Proposition}[section]
\newtheorem{lemma}{Lemma}[section]
\newtheorem{remark}{Remark}[section]
\DeclareMathOperator{\cn}{div}
 \newcommand{\N}{\mbox{\rm I$\!$N}}
 \newcommand{\R}{\mbox{\rm I$\!$R}}
\newcommand{\RR}{\mathbb{R}}
\newcommand{\grad}{{\nabla}}
\newcommand{\C}{\mathcal{C}}
\def\runninghead#1#2{\pagestyle{myheadings}
  \markboth{{\protect\footnotesize\it\centerline{#1}}\hfill}
  {\hfill{\protect\footnotesize\it\centerline{#2}}}}
\def\qed{\hbox{${\vcenter{\vbox{			
  \hrule height 0.4pt\hbox{\vrule width 0.4pt height 6pt
  \kern5pt\vrule width 0.4pt}\hrule height 0.4pt}}}$}}
\newcommand{\TT}{\ensuremath{\mathcal{T}}}
\newcommand{\dx}{\ensuremath{\, dx}}
\newcommand{\dt}{\ensuremath{\, dt}}
\def\O{\Omega}
\def\lV{\left\lVert}
\def\rV{\right\rVert}
\def\h{{\mathcal H}}
\def\N {\textit{{\tiny N}}}
\def \sign {{Z}}
\def\Delt{{\scriptstyle\Delta}}
\def\dist{{\rm dist\,}}
\def\ph{{\varphi}}
\def\sign{{\rm sign\,}}
\def\meas{{\rm meas\,}}
\def\div{\mbox{$\mathrm{div}$}}
\def\ptI{{\!,}}
\def\ptK{{K}}
\def\ptL{{L}}
\def\K{{K}}
\def\L{{L}}
\def\KIL{{{\textstyle\sigma}_{\!\!\K,\!\L}}}
\def\ptKIL{{\!\ptK\ptI\ptL}}
\def\mK{{|K|}}
\def\diam{{\rm diam\,}}
\def\ptI{,}
\def\bel{\begin{equation}\label}
\def\ee{\end{equation}}
\def\ptTau{{h}}
\def\delt{{{\scriptstyle\Delta} t}}
\def\Frond{{\vec{\mathcal F}}}
\def\Tau{{\TT}}
\def\QKn{{Q_\ptK^n}}
\def\dsp{\displaystyle}
\def\size{{\rm size\,}}
\def\diam{{\rm diam\,}}
\def\ptl{{\scriptscriptstyle \partial}}
\newcommand{\charu}{{1\hspace*{-3.5pt}1}}
\def\ptl{{\partial}}
\DeclareSymbolFont{AMSb}{U}{msb}{m}{n}
\DeclareSymbolFontAlphabet{\mathbb}{AMSb}
\def\Bbb{\mathbb}
\newcommand{\Rset}{\ensuremath{{\Bbb R}}}
\newcommand{\norm}[1]{\ensuremath{\left\|#1\right\|}}
\newcommand{\abs}[1]{\ensuremath{\left|#1\right|}}
\newcommand{\Grad}{\mathrm{\nabla}}
\newcommand{\disp}{\ensuremath{\displaystyle}}
\newcommand{\sn}{\sum_{n=0}^{N-1}}
\begin{document}

%---------------------------------------------------------------------------
%--RUNNINGHEAD*				
\runninghead{Bendahmane, Khalil, Saad}{Finite volume scheme for water gas}
\thispagestyle{empty}

\begin{center}
{\large {\bf CONVERGENCE OF A FINITE VOLUME SCHEME FOR GAS WATER FLOW IN A MULTI-DIMENSIONAL POROUS MEDIA}} 
\hspace{2cm}
\medskip

MOSTAFA BENDAHMANE$^1$,  ZIAD KHALIL$^2$, MAZEN SAAD$^2$ \\
\end{center}
{\footnotesize \it
\begin{center}
$^1$Institut Mathematiques de Bordeaux, Universite Victor Segalen Bordeaux 2, 
Place de la Victoire, 33076 Bordeaux, France.\\
 $^2$ Ecole Centrale de Nantes, Universit\'e de Nantes,
Laboratoire de Math\'ematiques Jean Leray, UMR CNRS 6629,
1, rue de la No\'e, 44321 Nantes France\\
E-mail :mostafa.bendahmane@u-bordeaux2.fr, Ziad.Khalil@ec-nantes.fr,  Mazen.Saad@ec-nantes.fr \\
~\\
{Research partially supported by GNR MOMAS}
\end{center}
}
\bigskip
~~
\bigskip

\noindent{\bf Abstract.}
A classical model for water-gas flows in porous media is considered. The degenerate coupled system of equations obtained by mass conservation is usually approximated by finite volume schemes in the oil reservoir simulations. The convergence properties of these schemes  are only known for incompressible fluids. This chapter deals with construction and   convergence analysis of a finite volume scheme for compressible and immiscible flow in porous media. In comparison with incompressible fluid, compressible fluids requires more powerful techniques. We present a new result of convergence in a two or three dimensional porous medium and under the only modification that the density of gas depends on global pressure.

\bigskip

\section{Introduction}
\label{ch4:secintro}
\setcounter{equation}{0}

Mathematical study of a petroleum-engineering schemes takes an important place in oil recovery engineering for production of hydrocarbons from
petroleum reservoirs. In soil mechanics, engineers study the air-water flow in soils and they prefer the use of a two phase flow model. More recently, due to the effects of global warming on climate change,
two and multi-phase flow has been receiving an increasing attention in connection with the disposal of radioactive waste and sequestration of $CO_2$.

It has been shown that the governing equations describing two incompressible (compressible) phase flow in porous media can be written in a fractional flow formulation, i.e., in terms of global pressure and saturation and that formulation has been studied; For incompressible flow and from a mathematical point of view  \cite{A-AD85,B-CJ86} and it has  been used in numerical codes \cite{CCJDD,CCJEGW,CCJLB}.
For immiscible and compressible two-phase flows (e.g., air, water), Ewing and al. in (\cite{ewing1}, \cite{chen2}) follow the ideas of
Chavent by considering global pressure and saturation as unknowns of the system. This formulation leads to a global pressure equation coupled to the water
saturation equation. The authors proposed a finite element  and finite difference method to solve the saturation equation and
a mixed finite element to approximate the global pressure equation. Note that the global pressure reads  as a parabolic equation
with a source term involving the evolution of the capillary pressure term. This evolution term is approached by Picard
iterations. This algorithm converges numerically  and suggests a continuous dependence on  the  capillary
terms and legitimates some approximations for small capillary pressure.
Further, it has been proven that this fractional flow approach is far more efficient than the original two-pressure approach from the computational point of view \cite{chen} and the references cited therein. For  compressible flow and from mathematical  point of view, the fractional flow formulation  is sufficient enough at least for slightly compressible gas, i.e, when the density of gas depends on the global pressure \cite{A-GS08,A-GS08-2}. More recently and under the context of theoretical study of compressible flow in porous media, the two-pressure approach
has been treated by Z. Khalil, M. Saad \cite{1-KS10,2-KS10}.\\
%It has been shown that the governing equations describing two incompressible (compressible) phase flow in porous media can be written in a fractional flow formulation, i.e., in terms of global pressure and saturation \cite{B-CJ86, A-GS04,,,,}. Further, it has been proven that this fractional flow approach
%is far more efficient than the original two-pressure approach from the computational point of view \cite{chen} and the references therein. For  compressible flow and from theoretical point of view, the fractional flow formulation  is sufficient enough at least for slightly compressible gas, i.e, when the density of gas depends on the global pressure \cite{}. More recently and under the context of theoretical study of compressible flow in porous media, the two-pressure approach
%has been treated by Z. Khalil, M. Saad \cite{}.
In this paper, we consider immiscible two-phase flows; the gas phase is considered to be compressible and the water one to be incompressible. The model is derived by using the global pressure notation and is justified at least for slightly compressible gas. The system represents two kinds of degeneracy. The first one is the classical degeneracy of the diffusion operator in saturation due to the capillary effect. The second one represents a degeneracy in the evolution term in pressure occurring in the region where the gas saturation vanishes: A classical compactness result on pressure is missed in the region where the gas phase is missing.
%The mathematical analysis of the resulting equations has been developed in \cite{}.

The aim of the present paper is to show that the approximate solutions obtained with the proposed upwind finite volume scheme (\ref{ch4:prob-discr1})-(\ref{ch4:prob-discr2}), converges as the mesh size tends to zero, to a solution of system (\ref{ch4:gas})-(\ref{ch4:water}) in an appropriate sense defined in Section \ref{ch4:sec:formulation}. In Section \ref{ch4:sec:FVS} we introduce the finite volume discretization, the numerical scheme and state the main convergence results. In Section \ref{ch4:sec:basic-apriori}, maximum principle on saturation is attained and {\it  a priori } estimates on the discrete gradient of the capillary term  and on the discrete gradient of the global pressure are derived as the continuous case  in C. Galusinski, M. Saad \cite{A-GS08-2}. In Section \ref{ch4:sec-exist-est}, a well posed-ness of the scheme is inspired by H. W. Alt, S. Luckhaus\cite{A-AL83}.  Section \ref{ch4:sec-compactness} is devoted to a space-time $L^1$  compactness argument, in this section we follow B. Andreianov, M. Bendahmane and R. Ruiz-Baier \cite{}. Finally, the passage to the limit on the scheme needs a powerful techniques due to the lack of compactness result on global pressure in the region where the saturation of gas vanishes, and this performed in section \ref{ch4:sec:conv}.

%-------------------------------------------------------------------
\date{}

%================================================
\section{The mathematical formulation}
\label{ch4:sec:formulation}
\setcounter{equation}{0}
%================================================
The fractional flow formulation  describing the immiscible displacement of two compressible and incompressible
fluids are given by the following mass conservation of each phase \cite{A-GS08-2}:
\begin{multline}
\hspace{1.5cm}\partial_{t}( \phi\rho(p)s) - \cn ({\bf K}\rho(p) M_1(s) \nabla p)
-\cn({\bf K}\rho(p)\alpha (s)\nabla s)\\
+ \cn({\bf K}\rho^2(p) M_1(s) {\bf g})+ \rho(p)sf_{P}^{~}= 0,~~~~~~~~~~~~~\label{ch4:gas}
\end{multline}
\begin{align}
&\partial_{t}(\phi s) + \cn ({\bf K}M_2(s) \nabla p)
-\cn({\bf K}\alpha (s)\nabla s) +  \cn({\bf K}\rho_2 M_2(s) {\bf g})+sf_{P}^{~}= f_{P}^{~}-f_{I}^{~}.\label{ch4:water}
\end{align}

where $\phi$ and ${\bf K}$ are  the porosity and absolute permeability of the porous medium; $\rho, \rho_2, p$ and  $s$ are
respectively the densities of gas and water (density of water is  constant), the global pressure and the saturation of gas;
$f_{P}^{~}, f_{I}^{~}, M_1, M_2$ and  ${\bf g }$ are respectively the production and injection source terms, the mobilities of gas and water and the gravity term.\\
To define the capillary term $\alpha$, let us denote by $p_1, p_2$ to be respectively  the pressures of gas and water phases. Thus, we define the capillary pressure and the total mobility as
\begin{align}
\label{ch4:pc}
&p_{12}(s(t,x)) = p_{1}(t,x) - p_{2}(t,x)\\
\label{ch4:totalmob}
&M(s)=M_1(s)+ M_2(s)
\end{align}
and the function $s\mapsto p_{12}(s)$ is non-decreasing
$(\frac{dp_{12}}{ds}(s) \geq 0, \mbox{ for all } s \in [0,1])$.
In this paper, the forced displacement of fluids is modellized. It is
used in many enhanced recovery processes: a fluid, such as water,
is injected into some wells in a reservoir while the resident
hydrocarbons are produced from other wells.
Now we define the capillary term
$$\alpha (s)=  \frac{M_1(s)M_2(s)}{M(s)}\frac{dp_{12}}{ds}(s)\geq
0$$

defining  a function $\tilde{p}(s)$ such that
$\frac{d\tilde{p}}{ds}(s) = \frac{M_1(s)}{M(s)}\frac{dp_{12}}{ds}(s)$,
and setting
$p = p_{2}+\tilde{p}$, named global pressure \cite{B-CJ86}.
 Thus, each phase velocity given by Darcy's law can be written as
\begin{align}
{\bf V}_{1} &=-{\bf K}M_1(s)\nabla p - {\bf K} \alpha(s) \nabla s +{\bf K}M_1(s)\rho_1(p){\bf g}\label{ch4:V1}\\
{\bf V}_{2} &=-{\bf K}M_2(s)\nabla p + {\bf K} \alpha(s) \nabla s  +{\bf K}M_2(s)\rho_2{\bf g}.\label{ch4:V2}
\end{align}

Note that this system is strongly degenerate. In fact,
the lack of coercivity of the degenerate
diffusion term $\cn({\bf K}\alpha (s)\nabla s)$ is classical for
incompressible flows. An additional difficulty is due to the degeneracy
of the time derivative term $\phi(x) \partial_{t}( \rho(p)s)$ which vanishes
in the region where $s=0$.
Another difficulty seems to be the degenerate diffusive pressure term
$\cn ({\bf K}\rho(p) M_1(s) \nabla p)$ in (\ref{ch4:gas}) where the mobility of the gas phase $M_1$
vanishes in $s=0$. In fact, a pressure diffusion term appears also in
the saturation equation (\ref{ch4:water}) with the term $\cn ({\bf
  K}M_2(s) \nabla p)$.
An energy estimate coupling the two equations (\ref{ch4:gas})-(\ref{ch4:water})
lets appear a non-degenerate dissipative pressure term
(see section \ref{ch4:sec:basic-apriori}).

Consider a fixed time $T>0$ and let $\Omega$ be a bounded set of $\Rset^d$ ($d\ge 1$).
We set $Q_T=(0,T)\times\Omega$, $\Sigma_T=(0,T)\times\partial\Omega$.
To the system (\ref{ch4:gas})-(\ref{ch4:water}) we add the following mixed boundary
conditions and initial conditions. We consider the boundary
$\partial \Omega=\Gamma_{w} \cup \Gamma_{i} $, where  $\Gamma_{w}$
denotes the water injection boundary and $\Gamma_{i}$ the impervious one.
\begin{equation} \label{ch4:cl}
\left \{
%\begin{array} {ll}
\begin{aligned}
& s(t,x) = 0, \;\; p(t,x) = 0 \text{ on } \Gamma_{w}\\[0.5ex]
& {\bf V}_1\cdot{\bf n} = {\bf V}_2\cdot{\bf n} = 0 \text{ on }\Gamma_{i},
%\end{array}
\end{aligned}
\right .
\end{equation}
where ${\bf n}$ is the outward normal to the boundary $\Gamma_{i}$.
We force a constant pressure (shifted at zero) along the time on the
 region of water injection.

\noindent{\it Initial condition:}
\begin{equation}\label{ch4:ci}
\left\{
\begin{aligned}
%\label{ch4:ci}
%\left \{\begin{array} {ll}
& s(0,x) = s_{0}(x),\mbox{ in } \O \\[0.5ex]
& p(0,x) = p_0(x) \mbox{ in } \O
%\end{array} \right .
\end{aligned}
\right.
\end{equation}

We are going to construct a finite volume scheme on orthogonal admissible mesh, we treat here the case where
$$
K=k\mathcal{I}_d
$$
where $k$ is a constant positive. For clarity, we take $k=1$ which equivalent to change the scale in time.
In remark \ref{ch4:remarqueK} we give the scheme where $k$ is a function depending on space.

Next we introduce some physically relevant assumptions on
the coefficients of the system.

\bigskip

\begin{enumerate}[{(H}1{)}]
\item \label{ch4:hyp:H1} The porosity $\phi$ belongs to $L^{\infty}(\Omega)$ and
there exist two positive constants $\phi_{0}$ and $\phi_{1}$ such that
$\phi_0 \le \phi(x)\le
\phi_1$ a.e. $x\in\Omega$.
%\item \label{ch4:hyp:H2} The tensor
%${\bf K}$ belongs to $(W^{1,\infty}(\Omega))^{d\times d}$. Moreover, there exist
%two positive constants $k_0$ and $k_\infty$ such that
%\begin{equation*}
%\|{\bf K}\|_{(L^\infty(\Omega))^{d\times d}}\le k_\infty \quad\text{and}
%\quad
%({\bf K}(x)\xi,\xi)\ge k_0|\xi|^2 \qquad (\text{for all } \xi\in \Rset^d, \text{ a.e.
%} x \in \Omega).
%\end{equation*}
\item \label{ch4:hyp:H3} The functions $M_1$ and $M_2$ belong to ${\mathcal
	C}^0([0,1]; \Rset^+)$, $M_1(0)=0$ 	and $M_2(1)=0$.
	In addition, there is a positive constant $m_0$  such that,
	for all $s\in [0,1]$,
	$$
	M_1(s)+M_2(s) \ge m_0 .
	$$
\item \label{ch4:hyp:H4} The function $\alpha \in {\mathcal C}^2([0,1];\Rset^+)$ satisfies $\alpha(s)>0$ for $0<s\le 1$, and
      $\alpha(0)=0$.

	We define $\beta(s)=\int_0^s\alpha(z)dz$ and assume that $\beta ^{-1}$ is an H\"older function
	of order $\theta$, with $0<\theta \le 1$ on $[0,\beta(1)]$.
	This means that there exists a positive $c$ such that for all $s_1,s_2 \in [0,\beta(1)]$, one has
	$|\beta ^{-1}(s_1)-\beta ^{-1}(s_1)|\le c |s_1-s_2|^\theta$.
\item \label{ch4:hyp:H5} $(f_{P}^{~},f_{I}^{~})\in (L^2(Q_T))^2$, $f_{P}^{~}(t,x)$, $f_{I}^{~}(t,x) \ge 0$
a.e. $(t,x)\in Q_T$
\item  \label{ch4:hyp:H6} The density $\rho$ is a ${\mathcal C}^1(\Rset)$ function, increasing, and
there exist $\rho_m>0$, $\rho_M<+\infty$ such that\\
 $\rho_m\le \rho(p)\le \rho_M$ for all $p\in \Rset$.
\end{enumerate}

\bigskip

The assumptions ({H}\ref{ch4:hyp:H1})--({H}\ref{ch4:hyp:H6}) are classical for porous media. Especially,
a practical sufficient condition to handle ({H}\ref{ch4:hyp:H4}) is to consider that
$\alpha$ is an H\"older function at ${s=0}$. This contains several relevant physical cases of two-phase flows in porous media (see \cite[chapter $V$]{B-CJ86}).

\smallbreak
Define
$$
H^1_{\Gamma_w}(\Omega)= \{\, u\in H^1(\Omega) \,;\, u=0 \text{ on } \Gamma_w \,\},
$$
this is an Hilbert space when equipped with the norm
$\lV u\rV_{H^1_{\Gamma_w}(\Omega)} =\lV\nabla u\rV_{(L^2(\Omega))^d}$.

In the next section we introduce the existence of solutions to
system (\ref{ch4:gas})-(\ref{ch4:water}) under the conditions ({H}\ref{ch4:hyp:H1})--({H}\ref{ch4:hyp:H6}).
\begin{definition}\label{ch4:thmain}(Weak solutions)\\
Let (H\ref{ch4:hyp:H1})-(H\ref{ch4:hyp:H6}) hold. Assume $p_0$ (defined by $(\ref{ch4:ci})$)
belongs to $L^2(\Omega)$, and $s_0$ satisfies $0\le s_0\le 1$
almost everywhere in $\Omega$. Then, the pair $(s,p)$ is a weak solution of the problem (\ref{ch4:gas})-(\ref{ch4:water}) if
\begin{align*}
0\le s\le 1 \text{ a.e. in } Q_T,
~~\beta(s)\in L^2(0,T;H^1_{\Gamma_w}(\Omega)),~~
p\in  L^2(0,T;H^1_{\Gamma_w}(\Omega)),
\end{align*}
such that for all $\varphi,\xi\in \mathcal{D}\big([0,T)\times\Omega\big)$,
\begin{multline}
 -\int_{Q_T}  \phi \rho(p) s \partial_t\varphi\,dxdt
-\int_\Omega \phi (x) u_0(x)\varphi(0,x)\,dx \\
+\int_{Q_T}\rho(p)
M_1(s) { \nabla} p\cdot\nabla \varphi \,dxdt
+\int_{Q_T}\rho(p)\nabla
\beta(s)\cdot\nabla \varphi\,dxdt\\
-\int_{Q_T} \rho^2(p)
M_1(s) {\bf g}\cdot\nabla \varphi \,dxdt
+\int_{Q_T}\rho(p)s f_{P}^{~} \varphi\,dxdt=0,
\label{ch4:pmain}
\end{multline}

\begin{multline}
- \int_{Q_T}  \phi s \partial_t\xi\,dxdt-\int_\Omega \phi s_0(x) \xi(0,x)\,dx
+\int_{Q_T}{ \nabla} \beta(s)\cdot\nabla \xi \,dxdt\\
-\int_{Q_T}M_2(s){\nabla} p\cdot\nabla \xi \,dxdt
-\int_{Q_T}\rho_2 M_2(s){\bf g}\cdot\nabla \xi \,dxdt\\
+\int_{Q_T} s f_{P}^{~} \xi \,dxdt
 = \int_{Q_T} (f_{P}^{~}-f_{I}^{~})\xi \,dxdt. \label{ch4:smain}
\end{multline}
\end{definition}
%================================================
\section{The finite volume scheme}
\label{ch4:sec:FVS}
\setcounter{equation}{0}
%================================================
%%%%%%%%%%%%%%%%%%%%%%%%%%%%%%%%%%%%%%%%%%%%%%%%%%%%%%%%%%%%

Now, we want to discretize the problem \eqref{ch4:gas}-\eqref{ch4:water}.
Let $\mathcal{T}$ be a regular and admissible mesh of the domain $\Omega$, constituting of open and
convex polygons called control volumes with maximum size (diameter) $h$.

We let $\Omega$ be an open bounded polygonal connected subset of
$R^3$ with boundary $\partial \Omega$. Let $\mathcal{T}$ be an admissible mesh of
the domain $\Omega$ consisting of open and convex polygons called control
volumes with maximum size (diameter) $h$.
For all $K \in \mathcal{T}$, let by $x_K$ denote the center of $K$, $N(K)$ the set of the
neighbors of $K$ i.e. the set of cells of $\mathcal{T}$ which have a common
interface with $K$, by $N_{\text{int}}(K)$ the set of the neighbors of $K$
located in the interior of $\mathcal{T}$, by $N_{\text{ext}}(K)$ the set of
edges of $K$ on the boundary $\partial \Omega$. \\
Furthermore, for all $L \in N_{int}(K)$
denote by $d_{K,L}$ the distance between $x_K$ and $x_{L}$, by $\sigma_{K,L}$
the interface between $K$ and $L$, by $\eta_{K,L}$ the unit normal vector
to $\sigma_{K,L}$ outward to $K$. And for all $\sigma \in N_{\text{ext}}(K)$, denoted by
$d_{K,\sigma}$ the distance from $x_K$ to $\sigma$.

\begin{figure}[ht]
\begin{center}
\includegraphics[width=0.4\linewidth]{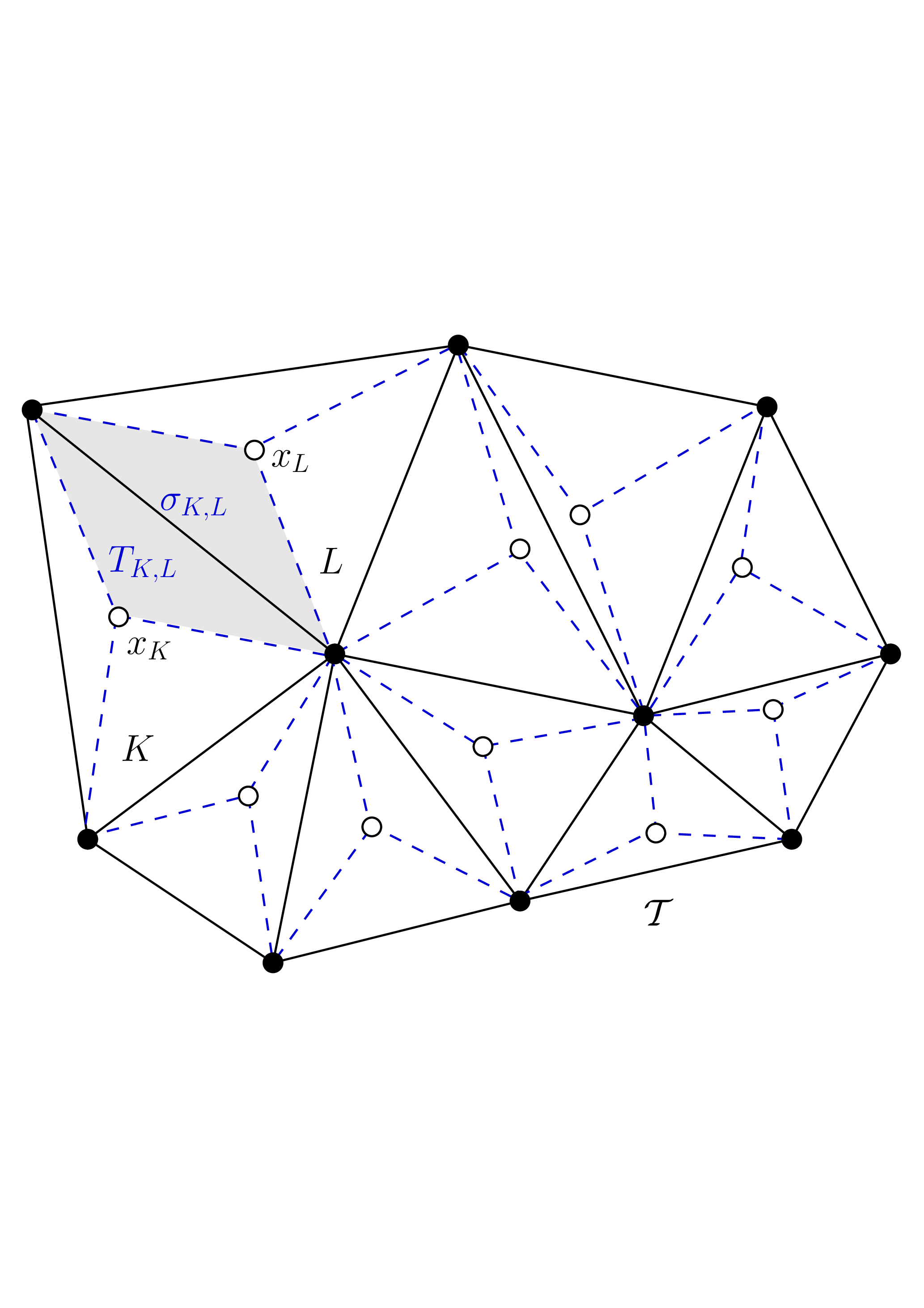}
\end {center}
\caption{\footnotesize
Control volumes, centers and diamonds}
\label{ch4:fig:diamond}
\end{figure}

For all $K \in \mathcal{T}$, we denote by
$\abs{K}$ the measure of $K$. The admissibility of $\mathcal{T}$ implies that
$\overline{\Omega}=\cup_{K\in \mathcal{T}} \overline{K}$, $K\cap L=\emptyset$
if $K,L\in \mathcal{T}$ and $K \ne L$, and there exist a finite sequence
of points $(x_{K})_{K\in \mathcal{T}}$ and the straight line $\overline{x_{K}x_{L}}$
is orthogonal to the edge $\sigma_{K,L}$.
We also need some regularity on the mesh:
$$
\min_{K\in \mathcal{T},L \in N(K)}
\frac{d_{K,L}}{\text{diam}(K)}\ge \alpha
$$
for some $\alpha \in \R^+$.

We denote by $H_h(\Omega) \subset L^2(\Omega)$ the space of functions
which are piecewise constant on each control volume $K \in \mathcal{T}$. For all
$u_h \in H_h(\Omega)$ and for all $K \in \mathcal{T}$, we denote by $U_K$ the constant value
of $u_h$ in $K$. For $(u_h,v_h)\in (H_h(\Omega))^2$, we define the following inner product:
$$
\left \langle u_h,v_h \right \rangle_{H_h}= l\sum_{K \in \mathcal{T} }\sum_{L \in N(K) }
\frac{\abs{\sigma_{K,L}}}{d_{K,L}}(U_{L}-U_{K})(V_{L}-V_K).
$$
In the case of homogeneous Neumann boundary condition, for example $\nabla u\cdot \eta=\nabla v\cdot \eta= 0$ on $\Gamma_i\subset \partial \Omega$, so we impose $U_{L}-U_{K} = V_{L}-V_{K} =0$ if $\sigma_{K,L}\subset \Gamma_i$. And in the case of homogeneous Dirichlet  boundary condition $u = v=0$ on $\Gamma_w\subset \partial \Omega$, so we impose $U_L=V_L=0$ if $\sigma_{K,L}\subset \Gamma_w$ and $d_{K,L}$ denotes the distance form $x_K$ to $\sigma_{K,L}$, more precisely,
$$
\left \langle u_h,v_h \right \rangle_{H_h}= l\sum_{K \in \mathcal{T} }\sum_{L \in N_{int}(K) }
\frac{\abs{\sigma_{K,L}}}{d_{K,L}}(U_{L}-U_{K})(V_{L}-V_K) + l\sum_{K \in \mathcal{T} }\sum_{\sigma \in N_{ext}(K)\cap\Gamma_w }
\frac{\abs{\sigma}}{d_{K,\sigma}}U_{K}V_K.
$$
We define a norm in $H_h(\Omega)$ by
$$
\norm{u_h}_{H_h(\Omega)}=(\left \langle u_h,u_h \right \rangle_{H_h})^{1/2}.
$$
Finally, we define $L_h(\Omega) \subset L^2(\Omega)$ the space of functions
which are piecewise constant on each control volume $K \in \mathcal{T}$ with the associated
norm
$$
\left (u_h,v_h \right )_{L_h(\Omega)}= \sum_{K \in \mathcal{T} }
\abs{K}U_{K} V_K,\qquad \norm{u_h}^2_{L_h(\Omega)}
=\sum_{K \in \mathcal{T} }\abs{K}\abs{U_{K}}^2,
$$
for $(u_h,v_h)\in (L_h(\Omega))^2$.

Next, we let $K \in \mathcal{T}$ and $L \in N(K)$ with common vertexes
$(a_{\ell,K,L})_{1\le \ell\le I}$ with $I \in \N^{\star}$. Next let
$T_{K,L}$ (respectivley $T^{\text{ext}}_{K,\sigma}$ for $\sigma\in N_{\text{ext}}(K)$) be the
open and convex polygon with vertexes $(x_K,x_L)$ ($x_K$ respectively) and
$(a_{\ell,K,L})_{1\le \ell\le I}$. Observe that
$$
\displaystyle \Omega=\cup_{K\in \mathcal{T}}
\Biggl(\Bigl(\cup_{L\in N(K)}\overline{T}_{K,L}\Bigl)
\cup \Bigl(\cup_{\sigma \in N_{\text{ext}}(K)}\overline{T}^{\text{ext}}_{K,\sigma}\Bigl)\Biggl)
$$
The discrete gradient $\Grad_h u_h$ of a constant per control volume function $u_h$ is defined as the constant per diamond $T_{K,L}$ $\R^l$-valued function with values
$$
\Grad_h u_{h}(x)=\begin{cases}
& l \frac{U_{L}-U_{K}}{d_{K,L}}\eta_{K,L}\text{ if $x \in T_{K,L}$},\\
&l \frac{U_{\sigma}-U_{K}}{d_{K,\sigma}}\eta_{K,\sigma}\text{ if $x \in T^{\text{ext}}_{K,\sigma}$},
\end{cases}
$$
Notice that : \\
$\bullet$ The $l$-dimensional mesure $\abs{T_{K,L}}$ of $T_{K,L}$ equals to $\frac 1 l \abs{\sigma_{K,L}}d_{K,L}$.\\
$\bullet$ The semi-norm $\|u_h\|_{H_h}$ coincides with the $L^2(\Omega)$ norm of $\Grad_h u_{h}$, in fact
\begin{multline*}
\|\Grad_h u_h\|_{L^2(\Omega)}^2 =  \sum_{K \in \mathcal{T} }\sum_{L \in N(K) }\int_{T_{K,L}}|\nabla_h u_h|^2\,dx =
l^2 \sum_{K \in \mathcal{T} }\sum_{L \in N(K) }\abs{T_{K,L}} \frac{|U_{L}-U_{K}|^2}{|d_{K,L}|^2}
 \\
 = l \sum_{K \in \mathcal{T} }\sum_{L \in N(K) }\frac{\abs{\sigma_{K,L}} }{d_{K,L}}|U_{L}-U_{K}|^2
 = \norm{u_h}_{H_h(\Omega)}^2
 \end{multline*}
 $\bullet$
 Let $\vec{F}_{K,L}$ for an arbitrary $\R^l$ vector associated with the interface $\sigma_{K,L}$ satisfying $\vec{F}_{K,L}= \vec{F}_{L,K}$. We denote by $\mathcal{E}_h$ the set of interfaces $\sigma_{K,L}$. Then, a discrete field $(\vec{F}_{K,L})_{\sigma_{K,L}\in\mathcal{E}_h}$  is assimilated to the piecewise constant vector function
 $$
 \vec{F}_h = \sum_{\sigma_{K,L}\in\mathcal{E}_h} \vec{F}_{L,K} \chi_{T_{K,L}}.
 $$
 The discrete divergence of the field $\vec{F}_h$ is defined as the discrete function $w_h=div_h\vec{F}_h $ with the entires
 $$
div_K \vec{F}_h :=\frac{1}{\abs{K}} \sum_{L\in N(K)} |\sigma_{K,L}|\vec{F}_{K,L}\cdot \eta_{K,L}.
 $$
 A key point of the analysis of the two-point finite volume schemes is the following kind of discrete duality property :\\
 \begin{lemma}\label{ch4:lam:stokes}
For all discrete function $w_h$ on  $\Omega$  which is null on $\partial\Omega$, for all discrete field $\vec{F}_{h}$  on  $\Omega$,
$$
 \sum_{K\in \mathcal{T}} \abs{K}w_K div_K \vec{F}_{h} = - \sum_{\sigma_{K,L}\in \mathcal{E}_h}\abs{T_{K,L}}\nabla_{K,L}w_h \cdot\vec{F}_{K,L}.
 $$
 \end{lemma}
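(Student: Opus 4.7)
The plan is to expand the left-hand side using the definition of the discrete divergence, reorganize the resulting double sum over cell--neighbor pairs into a single sum over interfaces, and then match the paired expression against the right-hand side via the diamond-cell geometry $|T_{K,L}|=|\sigma_{K,L}|d_{K,L}/l$. The hypothesis $w_h\equiv 0$ on $\partial\Omega$ is there precisely to kill the boundary residue produced by the rearrangement.

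Concretely, I would first substitute
\[
\mathrm{div}_K \vec{F}_h = \frac{1}{|K|}\sum_{L\in N(K)}|\sigma_{K,L}|\,\vec{F}_{K,L}\cdot\eta_{K,L}
\]
into $\sum_{K}|K|\,w_K\,\mathrm{div}_K\vec{F}_h$, cancelling $|K|$ to obtain
\[
\mathrm{LHS}=\sum_{K\in\mathcal{T}}\sum_{L\in N(K)}|\sigma_{K,L}|\,w_K\,\vec{F}_{K,L}\cdot\eta_{K,L}.
\]
Each interior interface $\sigma_{K,L}\in\mathcal{E}_h$ appears exactly twice in this sum, once as $(K,L)$ and once as $(L,K)$. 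Using the conservativity $\vec{F}_{L,K}=\vec{F}_{K,L}$ and the orientation rule $\eta_{L,K}=-\eta_{K,L}$, the two occurrences combine to
\[
|\sigma_{K,L}|\,(w_K-w_L)\,\vec{F}_{K,L}\cdot\eta_{K,L}.
\]
For boundary edges in $N_{\text{ext}}(K)$ (if they are included in the divergence), one assigns the ghost value $w_\sigma=0$; the assumption $w_h=0$ on $\partial\Omega$ makes these contributions vanish either way.

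The final step is algebraic. From $\nabla_{K,L}w_h = l\,(W_L-W_K)/d_{K,L}\,\eta_{K,L}$ and $|T_{K,L}|=|\sigma_{K,L}|d_{K,L}/l$, I read off
\[
|T_{K,L}|\,\nabla_{K,L}w_h = |\sigma_{K,L}|\,(W_L-W_K)\,\eta_{K,L},
\]
so the paired expression above rewrites exactly as $-|T_{K,L}|\,\nabla_{K,L}w_h\cdot\vec{F}_{K,L}$. Summing over $\mathcal{E}_h$ yields the right-hand side. The argument is essentially bookkeeping; the one point requiring some care is confirming that every interface is indeed counted twice with opposite orientation and that the boundary hypothesis absorbs the leftover one-sided terms, which is why this discrete duality is labeled as a lemma rather than folded silently into the main proofs.
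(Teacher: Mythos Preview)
Your proof is correct and follows essentially the same route as the paper: expand $\mathrm{div}_K\vec F_h$, pair the two contributions of each interface using $\vec F_{L,K}=\vec F_{K,L}$ and $\eta_{L,K}=-\eta_{K,L}$, and then invoke $|T_{K,L}|=\tfrac{1}{l}|\sigma_{K,L}|d_{K,L}$ together with the definition of $\nabla_{K,L}w_h$ to recognise the right-hand side. The only cosmetic difference is that the paper keeps the double sum with a prefactor $\tfrac12$ until the very last line, whereas you pass to the single edge sum immediately; your explicit remark on the boundary contributions is a welcome addition that the paper leaves implicit.
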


 \begin{proof}
 \begin{align*}
 \sum_{K\in \mathcal{T}} \abs{K} w_K div_K \vec{F}_{h}
 &=\sum_{K\in \mathcal{T}}\sum_{L\in N(K)}\abs{\sigma_{K,L}} w_K \vec{F}_{K,L}\cdot \eta_{K,L}\\
 &=-\frac{1}{2}\sum_{K\in \mathcal{T}}\sum_{L\in N(K)}\abs{\sigma_{K,L}}    (w_L-w_K) \eta_{K,L} \cdot\vec{F}_{K,L}\\
 &=-\frac{1}{2}\sum_{K\in \mathcal{T}}\sum_{L\in N(K)}\frac 1 l \abs{\sigma_{K,L}} d_{K,L}\,\,  l\frac{ (w_L-w_K)}{d_{K,L} } \eta_{K,L} \cdot\vec{F}_{K,L} \\
 &=- \sum_{\sigma_{K,L}\in \mathcal{E}_h}\abs{T_{K,L}}\nabla_{K,L}w_h \cdot\vec{F}_{K,L} .
\end{align*}

 \end{proof}
Next, we approximate $M_i(s)\Grad p\cdot\eta_{K,L},~~ (i=1,2.)$ by means of the values $s_K,s_L$ and $p_K,p_L$
that are available in the neighborhood of the interface $\sigma_{K,L}$. To do this,
let us use some function $G_i$ of $(a,b,c)\in\R^3$.
The numerical convection flux functions $G_i$,
$G_i\in C(\R^3,\R)$, satisfies the following properties:
\begin{equation}\label{ch4:Hypfluxes}
\begin{cases}
\text{(a) $G_i(\cdot,b,c)$ is non-decreasing for all $b,c\in\R$,}\\
\hspace*{15pt} \text{and $G_i(a,\cdot,c)$ is non-increasing
 for all $ a,c\in \R$};\\
%\text{(b) $G_i(a,b,\cdot)=-G_i(b,a,\cdot)$ for all $a,b\in \R$};\\
\text{(b) $G_1(a,a,c)=-M_1(a)\,c$ and $G_2(a,a,c)=M_2(a)\,c$ for all $a,c \in \R$};
\\ \text{(c) $G_i(a,b,c)=-G_i(b,a,-c)$
and there exists $C>0$ such that }\\
 \,\,\quad|G_i(a,b,c)|\leq C\,
\bigl(|a|+|b|\bigr)|c| \text{ for all $a,b,c \in \R$}\\
 \text{(d) There exists a constant $m_0$ such that}\\
 \text{ $(G_2(a,b,c)-G_1(a,b,c))c \ge m_0|c|^2$
 for all $ a,b,c \in \R$}.
\end{cases}
\end{equation}

\begin{remark}
Note that  the assumptions (a), (b) and (c) are standard and they respectively ensure the maximum principle on saturation, the consistency of the numerical flux, and the conservation of the numerical flux on each interface. Moreover, the last assumption (d) will be used to obtain the $L^2$ estimate of discrete gradient of the pressure $p$. Practical examples of numerical convective flux functions can be found in \cite{EyGaHe:book}.

In our context, one possibility to construct the numerical
flux $G_i$ satisfying \eqref{ch4:Hypfluxes} is to split $M_i$ in the
non-decreasing part ${M_i}_{\uparrow}$ and the non-increasing part
${M_i}_{\downarrow}$:
$$
{M_i}_\uparrow(z):=\int_0^z ({M_i}'(s))^+\,ds \quad
{M_i}_\downarrow(z):=-\int_0^z ({M_i}'(s))^-\,ds.
$$
Herein, $s^+=\max(s,0)$ and $s^-=\max(-s,0)$. Then we take
$$
G_i(a,b;c)
={c}^+\Bigl({M_i}_\uparrow(a)+{M_i}_\downarrow(b)\Bigr)-{c}^-\Bigl({M_i}_\uparrow(b)+{M_i}_\downarrow(a)\Bigr),
$$
which leads to,

\begin{equation}\label{ch4:}
\begin{cases}
G_1(a,b;c)=-M_1(b)\,{c}^+ - (-M_1(a))\,{c}^-\\
G_2(a,b;c)=M_2(b)\,{c}^+  -M_2(a)\,{c}^-
\end{cases}
\end{equation}
Note that the function $s\mapsto M_1(s)$ is non-decreasing, and the function $s\mapsto M_2(s)$ is non-increasing which lead to the monotony property of the function $G_i$. Furthermore, depending on the assumption (H\ref{ch4:hyp:H3}) on the total mobility we have,
\begin{align}\label{ch4:tmob}
\Big( G_2(a,b,c)-G_1(a,b,c)\Big) c=M(b){c^+}^2 + M(a){c^-}^2\geq m_0 c^2.
\end{align}
\end{remark}

The next goal is to discretize the problem (\ref{ch4:gas})-(\ref{ch4:water}).
We denote by $\mathcal{D}$ an admissible discretization of $Q_T$, which consists of
an admissible mesh of $\Omega$, a time step $\Delta t>0$, and a positive number $N$ chosen
as the smallest integer such that $N\Delta t\ge T$.
We set,
 \begin{align*}
t^n&:=n \Delta t
&&\text{ for } n\in  \{0,\ldots,N\}\\
dp^{n+1}_{K,L}&:=\frac{\abs{\sigma_{K,L}}}{d_{K,L}}(p^{n+1}_{L}-p^{n+1}_{K})
&&\text{ for } n\in  \{0,\ldots,N-1\}\\
\rho^{n+1}_{K,L}&:=\frac{1}{p_L^{n+1}-p_K^{n+1}}\int_{p_K^{n+1}}^{p_L^{n+1}} \rho(\zeta)\,d\zeta
&&\text{ for } n\in  \{0,\ldots,N-1\}\\
f^{n+1}_{P,K}&:=\frac{1}{\Delta t \abs{K}}\int_{t^n}^{t^{n+1}}\int_K f_p(x)\,dx dt
&&\text{ for } n\in  \{0,\ldots,N-1\}\\
f^{n+1}_{I,K}&:=\frac{1}{\Delta t \abs{K}}\int_{t^n}^{t^{n+1}}\int_K f_I(x)\,dx dt
&&\text{ for } n\in  \{0,\ldots,N-1\}\\
{\bf g}_{K,L}&:=\int_{K/L}({\bf g}\cdot \eta_{K,L})^+\, d\gamma(x)=\int_{K/L}({\bf g}\cdot \eta_{L,K})^-\, d\gamma(x)
&&
\end{align*}

A finite volume  scheme for the discretization of the problem
(\ref{ch4:gas})-(\ref{ch4:water})
is giving by the following set of equations with unknowns
$P=(p_{K}^{n+1})_{K \in \mathcal{T}},n\in[0,N]$ and
$S=(s_{K}^{n+1})_{K \in \mathcal{T}},n\in[0,N]$, for
all $K \in \mathcal{T}$ and $n \in [0,N]$
\begin{equation}\label{ch4:prob:init}
p_K^0=\frac{1}{\abs{K}} \int_{K} p_0(x) \,dx,\qquad
s_K^0=\frac{1}{\abs{K}} \int_{K} s_0(x) \,dx,
\end{equation}
and
\begin{multline}\label{ch4:prob-discr1}
\abs{K}\phi_K\frac{\rho(p^{n+1}_K)s^{n+1}_K-\rho(p^{n}_K)s^{n}_K}{\Delta t}
-\sum_{L \in N(K) }
\frac{\abs{\sigma_{K,L}}}{d_{K,L}}\rho^{n+1}_{K,L}(\beta(s^{n+1}_{L})-\beta(s^{n+1}_{K}))
\\
 + \sum_{L \in N(K) } \rho^{n+1}_{K,L}G_1(s^{n+1}_K,s^{n+1}_{L};dp^{n+1}_{K,L}) +F^{(n+1)}_{1,K}+ \abs{K}\rho(p^{n+1}_K)s^{n+1}_K f_{P,K}^{n+1}=0,
\end{multline}
\begin{multline}\label{ch4:prob-discr2}
\abs{K}\phi_K\frac{s^{n+1}_K-s^{n}_K}{\Delta t}- \sum_{L \in N(K) }
\frac{\abs{\sigma_{K,L}}}{d_{K,L}}(\beta(s^{n+1}_{L})-\beta(s^{n+1}_{K}))\\
+ \sum_{L \in N(K) } G_2(s^{n+1}_K,s^{n+1}_{L};dp^{n+1}_{K,L}) + F^{(n+1)}_{2,K}+ \abs{K}(s^{n+1}_K-1) f_{P,K}^{n+1}=
-\abs{K} f_{I,K}^{n+1},
\end{multline}
where $\disp F^{n+1}_{1,K}$ the approximation of $\disp \int_{\partial K}\rho^2(p^{n+1}) M_1(s^{n+1}) {\bf g}\cdot \eta_{K,L}\, d\gamma(x)$ by an upwind scheme :
\begin{equation}
 F^{n+1}_{1,K}=\sum_{L \in N(K) }F^{n+1}_{1,K,L}=\sum_{L \in N(K) }\Big(\rho^2(p^{n+1}_K) M_1(s^{n+1}_K){\bf g}_{K,L}
- \rho^2(p^{n+1}_L) M_1(s^{n+1}_L){\bf g}_{L,K}\Big),
\end{equation}
and  similarly $\disp F^{n+1}_{2,K}$ the approximation of $\disp\int_{\partial K}\rho_2 M_2(s^{n+1}) {\bf g}\cdot \eta_{K,L}\, d\gamma(x)$ such that
\begin{align}
 F^{(n+1}_{2,K}=\sum_{L \in N(K)} F^{(n+1}_{2,K,L}=\sum_{L \in N(K)}\Big(\rho_2 M_2(s^{n+1}_L){\bf g}_{K,L}
- \rho_2 M_2(s^{n+1}_K){\bf g}_{L,K}\Big).
\end{align}
Note that the numerical fluxes to approach the gravity terms $F_1, F_2$ are nondecreasing with respect to $s_K$ and nonincreasing with respect $s_L$.

We extend the mobility functions $s\mapsto M_1(s)$ and $s\mapsto M_2(s)$ outside $[0,1]$ by continues constant functions as follows,
%$$
%\begin{tabular}{ll}
%M_1(s)=
%\begin{cases}
%M_1(0)=0 & \text{for } s\leq 0\\
%M_1(s)& \text{for } s\in[0,1]\\
%M_1(1)& \text{for } s\geq 1
%\end{cases}
%&
%M_2(s)=
%\begin{cases}
%M_2(0) &\text{for } s\leq 0\\
%M_2(s)&\text{for } s\in[0,1]\\
%M_2(1)=0&\text{for } s\geq1
%\end{cases}
%\end{tabular}
%$$
The approximate solutions, $p_{\delta t,h},~s_{\delta t,h}: \R^+\times \Omega \to \R$
given for all $K \in \mathcal{T}$ and $n \in [0,N]$ by

\begin{equation}\label{ch4:prob:general}
p_{\delta t,h}(t,x)=p_{K}^{n+1} \text{ and }s_{\delta t,h}(t,x)=s_K^{n+1},
\end{equation}

for all $x \in K$ and $t \in (n \Delta t,(n+1)\Delta t)$.

\medskip
The main result of this paper is the following theorem.
\begin{theorem} \label{ch4:theo1}
Assume that (H\ref{ch4:hyp:H1})-(H\ref{ch4:hyp:H6}) hold. Let $(p_{0},s_{0})\in L^2(\Omega, \R)\times L^\infty(\Omega,\R)$ and $0\le s_0\le 1$ a.e. in $\Omega$.
Then there exists an approximate
solution $(p_{\delta t,h},s_{\delta t,h})$ to the system
(\ref{ch4:prob-discr1})-(\ref{ch4:prob-discr2}), which converges (up to a subsequence)
to $(p,s)$ as $(\delta t,h) \to (0,0)$, where $(p,s)$ is a weak solution
to the system (\ref{ch4:gas})-(\ref{ch4:water}) in the sense of the Definition \ref{ch4:thmain}.
\end{theorem}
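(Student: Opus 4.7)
The proof naturally splits into four stages, as sketched in the introduction: existence of discrete solutions together with the discrete maximum principle on $s$, coupled a priori estimates on the discrete gradients, space-time $L^1$ compactness of the saturation, and passage to the limit in the discrete weak formulation.

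First I would show that, given $(p_K^n, s_K^n)_{K\in\mathcal{T}}$ with $0\le s_K^n\le 1$, the nonlinear system (\ref{ch4:prob-discr1})--(\ref{ch4:prob-discr2}) admits a solution $(p_K^{n+1}, s_K^{n+1})$ satisfying $0\le s_K^{n+1}\le 1$. The discrete maximum principle is obtained by testing (\ref{ch4:prob-discr2}) against $-(s_K^{n+1})^-$ and against $(s_K^{n+1}-1)^+$: the monotonicity properties (a)--(b) of $G_2$, the condition $M_2(1)=0$, the upwind structure of $F^{n+1}_{2,K}$, and the nonnegativity of the sources (H\ref{ch4:hyp:H5}) eliminate every unfavourable term. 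Existence of a solution to the algebraic system itself follows from a topological-degree argument in the spirit of \cite{A-AL83}, connecting the scheme through a homotopy to a linear reference problem whose uniform-in-parameter a priori bounds are furnished by the next step.

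Second, I would establish the coupled energy estimates. Testing (\ref{ch4:prob-discr1}) against (essentially) $p_K^{n+1}$ and (\ref{ch4:prob-discr2}) against an appropriate capillary potential depending on $s_K^{n+1}$, summing in $K$ and $n$, and invoking the discrete Stokes formula of Lemma \ref{ch4:lam:stokes}, cross-cancellations leave a coupled convective contribution bounded from below by $m_0\rho_m\sum|dp^{n+1}_{K,L}|^2$ thanks to property (d) of (\ref{ch4:Hypfluxes}); the dissipation of $\beta$ yields the bound on $\nabla_h\beta(s_{\delta t,h})$. This produces the uniform estimate
\begin{equation*}
\norm{\nabla_h p_{\delta t,h}}_{L^2(Q_T)} + \norm{\nabla_h \beta(s_{\delta t,h})}_{L^2(Q_T)} \le C.
\end{equation*}
I would then deduce space-translate estimates for $p_{\delta t,h}$ and $\beta(s_{\delta t,h})$ via the Kolmogorov--Riesz criterion adapted to piecewise-constant finite-volume functions. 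Time-translate estimates for $s_{\delta t,h}$ are extracted directly from (\ref{ch4:prob-discr2}) by multiplying with $\beta(s^{n+1}_K)-\beta(s^{m+1}_K)$ and telescoping in the Alt--Luckhaus manner; the Hölder continuity of $\beta^{-1}$ in (H\ref{ch4:hyp:H4}) then transfers relative $L^1(Q_T)$ compactness from $\beta(s_{\delta t,h})$ to $s_{\delta t,h}$. Up to a subsequence, $s_{\delta t,h}\to s$ a.e.\ and in every $L^q(Q_T)$ with $q<\infty$, $\beta(s_{\delta t,h})\to\beta(s)$ strongly in $L^2$, while $\nabla_h\beta(s_{\delta t,h})\rightharpoonup\nabla\beta(s)$ and $\nabla_h p_{\delta t,h}\rightharpoonup\nabla p$ weakly in $(L^2(Q_T))^d$.

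The last and most delicate step is passing to the limit to recover (\ref{ch4:pmain})--(\ref{ch4:smain}). After testing the equations against cell averages of a smooth $\varphi$ and applying Lemma \ref{ch4:lam:stokes}, the majority of terms converge via weak--strong pairings: the upwind convective fluxes $G_i(s^{n+1}_K,s^{n+1}_L;dp^{n+1}_{K,L})$ reconstruct $M_i(s)\nabla p$ using consistency (b), continuity of $M_i$ on $[0,1]$, and the pairing of $\nabla_h p_{\delta t,h}\rightharpoonup\nabla p$ with the strong limit of $M_i(s_{\delta t,h})$. The genuine obstacle, and the reason the compressible setting is strictly harder than the incompressible one, is the passage to the limit in the density-weighted terms, since $p_{\delta t,h}$ is not known to converge pointwise on the degenerate set $\{s=0\}$. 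Following \cite{A-GS08-2}, I would handle this by two devices. For the convective term $\rho^{n+1}_{K,L}G_1$, the coefficient $\rho(p)M_1(s)$ converges strongly in $L^2$ because $M_1(0)=0$, $M_1$ is continuous, $\rho$ is bounded by (H\ref{ch4:hyp:H6}), and $s_{\delta t,h}\to s$ strongly; pairing with the weak limit of $\nabla_h p_{\delta t,h}$ then identifies the nonlinear limit. For the degenerate evolution term $\phi\rho(p)s$, I would adapt a Minty-type monotonicity argument based on the strict monotonicity of $\rho$ on $\{s>0\}$ and on the fact that $\rho(p)s$ vanishes wherever $s$ vanishes, so that only the behaviour on $\{s>0\}$, where sufficient integrability of $p$ can be recovered from the energy estimate, needs to be identified. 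This last identification of the degenerate time derivative is the principal technical hurdle.
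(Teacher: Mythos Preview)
Your four-stage architecture is correct and matches the paper, but two concrete steps are misdescribed in ways that would not close.

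For the maximum principle, testing only the water equation (\ref{ch4:prob-discr2}) against $-(s_K^{n+1})^-$ does not give $s\ge 0$: the mobility $M_2$ does not vanish at $0$, so after using monotonicity of $G_2$ the surviving term $M_2(s_K^{n+1})\,dp_{K,L}\,(s_K^{n+1})^-$ has no sign, and the injection source $f_{I,K}(s_K^{n+1})^-$ sits on the wrong side. The paper obtains $s\ge 0$ from the \emph{gas} equation (\ref{ch4:prob-discr1}), using $M_1(0)=0$, and $s\le 1$ from the water equation, using $M_2(1)=0$.

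More importantly, your description of the coupled energy estimate misidentifies the test function. In Proposition~\ref{ch4:prop:LPBV} the gas equation is tested with $p_K^{n+1}$ and the water equation with $g(p_K^{n+1})$, where $g'(p)=-\rho(p)$, a \emph{pressure}-dependent function---not a capillary potential in $s$. The whole point of this pairing, together with the specific interface density $\rho_{K,L}^{n+1}=(p_L^{n+1}-p_K^{n+1})^{-1}\int_{p_K^{n+1}}^{p_L^{n+1}}\rho(\zeta)\,d\zeta$, is that the capillary diffusion contributions cancel exactly ($E_2=0$ in the paper's notation); only then does the convective coupling via property~(d) produce the coercive pressure term. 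Testing water with a function of $s$ does not yield this cancellation and the pressure estimate does not close. The bound on $\nabla_h\beta(s)$ is obtained in a \emph{separate} step by testing the water equation alone with $\beta(s_K^{n+1})$.

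On compactness you take a genuinely different but viable route. The paper does not run Alt--Luckhaus time translates on $s_h$; it proves translate estimates for the product $\phi_h\rho(p_h)s_hB(s_h)$ (and for $s_hB(s_h)$) via a Kruzhkov-type mollifier argument, then recovers a.e.\ convergence of $s_h$ from that of $s_hB(s_h)$. Your handling of the degenerate density-weighted terms---strong convergence of $\rho(p)M_1(s)$ because $M_1(0)=0$, and a Minty argument to identify $\rho(p)s$---is essentially the paper's device, cf.\ (\ref{ch4:r=rhos}) and (\ref{ch4:f1f2}).
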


%%%%%%%%%%%%%%
\section{A priori estimates}\label{ch4:sec:basic-apriori}
%%%%%%%%%%%%%%%%%%%%%%%%%%%%%%%%%%%%%%%%%%%%%%%%%%%%%%%%%
%%%%%%%%%%%%%%%%%%%%%%%%%%%%%%%%%%%%%%%%%%%%%%%%%%%%%%%
We are now concerned with a uniform estimate on the discrete gradient of $\beta(s)$, and on the discrete gradient of the global  pressure $p$.
\subsection{Nonnegativity}
%%%%%%%%%%%%%%%%%%%%%%%%%%%%%%%%%%%%%%%%%%%%%%%%%%%%%%%%%%
We aim to prove the following lemma which is a basis to the analysis that we are going to perform.
\begin{lemma}
Let $(s_K^{0})_{K \in \mathcal{T}}\in [0,1]$.
Then, the solution $(s_K^{n})_{K \in \mathcal{T},n \in  \{0,\ldots,N\}}$,  of the finite volume scheme \eqref{ch4:prob:init}-\eqref{ch4:prob-discr2} remains in  $[0,1]$.
\end{lemma}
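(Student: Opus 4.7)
The plan is to argue by induction on $n$, combining two test-function arguments applied separately to the two equations of the scheme. The base case is the assumption on the initial data. Granting $s_K^n\in[0,1]$ for every $K\in\mathcal{T}$, the lower bound $s_K^{n+1}\ge 0$ will be obtained from the gas equation \eqref{ch4:prob-discr1} tested with $-(s_K^{n+1})^-$, while the upper bound $s_K^{n+1}\le 1$ will come from the water equation \eqref{ch4:prob-discr2} tested with $(s_K^{n+1}-1)^+$. The reason for this specific pairing lies in hypothesis (H\ref{ch4:hyp:H3}): the degeneracies $M_1(0)=0$ and $M_2(1)=0$, together with the continuous constant extension of the mobilities outside $[0,1]$ announced just above the statement, make the convection and gravity numerical fluxes vanish at the mobility factor carried by cell $K$ precisely where the chosen test function is supported. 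Selecting the right equation/test-function pair is the main obstacle; with the wrong choice (e.g.\ testing \eqref{ch4:prob-discr2} with $-(s_K^{n+1})^-$) one would be left with the nondegenerate mobility $M_2(0)>0$ multiplying sign-indefinite pressure and gravity factors, and no clean sign could be extracted.

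For the lower bound, I multiply \eqref{ch4:prob-discr1} by $-(s_K^{n+1})^-$ and sum over $K$. On the set where $s_K^{n+1}<0$ one has $M_1(s_K^{n+1})=0$, so the explicit formulae from the Remark reduce the convective flux to $G_1(s_K^{n+1},s_L^{n+1};dp^{n+1}_{K,L}) = -M_1(s_L^{n+1})(dp^{n+1}_{K,L})^+$ and the gravity flux to $F^{n+1}_{1,K,L} = -\rho^2(p_L^{n+1})M_1(s_L^{n+1}){\bf g}_{L,K}$; both are nonpositive, so their multiplication by $-(s_K^{n+1})^-\le 0$ (and by $\rho^{n+1}_{K,L}\ge \rho_m>0$ for the convective term) yields nonnegative contributions. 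The time derivative is controlled by the pointwise convex inequality
$$
\bigl(\rho(p_K^{n+1})s_K^{n+1}-\rho(p_K^n)s_K^n\bigr)\bigl(-(s_K^{n+1})^-\bigr)\ge \rho_m\bigl((s_K^{n+1})^-\bigr)^2,
$$
which uses $s_K^n\ge 0$; after the factor $|K|\phi_K/\Delta t$ and summation it produces the coercive term $\tfrac{\rho_m\phi_0}{\Delta t}\sum_K|K|\bigl((s_K^{n+1})^-\bigr)^2$. The diffusion contribution, after antisymmetric rearrangement across interfaces, is $-\tfrac12\sum_{K,L}\tfrac{|\sigma_{K,L}|}{d_{K,L}}\rho^{n+1}_{K,L}\bigl(\beta(s_L^{n+1})-\beta(s_K^{n+1})\bigr)\bigl((s_L^{n+1})^--(s_K^{n+1})^-\bigr)$, which is nonnegative because $\beta$ is nondecreasing while $s\mapsto s^-$ is nonincreasing. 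The source term equals $\rho(p_K^{n+1})\bigl((s_K^{n+1})^-\bigr)^2 f^{n+1}_{P,K}\ge 0$, and the right-hand side of \eqref{ch4:prob-discr1} is zero. Collecting signs forces $(s_K^{n+1})^-\equiv 0$.

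The upper bound follows by the mirror argument applied to \eqref{ch4:prob-discr2} with test function $(s_K^{n+1}-1)^+$: where $s_K^{n+1}>1$ the extension gives $M_2(s_K^{n+1})=0$, so $G_2$ collapses to $M_2(s_L^{n+1})(dp^{n+1}_{K,L})^+$ and $F^{n+1}_{2,K,L}$ to $\rho_2 M_2(s_L^{n+1}){\bf g}_{K,L}$, both nonnegative, contributing nonnegatively after multiplication by $(s_K^{n+1}-1)^+\ge 0$. The time term gives at least $\tfrac{\phi_0}{2\Delta t}\sum_K|K|\bigl((s_K^{n+1}-1)^+\bigr)^2$ via the convexity of $s\mapsto \tfrac12\bigl((s-1)^+\bigr)^2$ and the induction hypothesis $s_K^n\le 1$; the diffusion is handled exactly as above; the source $(s_K^{n+1}-1)f^{n+1}_{P,K}(s_K^{n+1}-1)^+\ge 0$; and the right-hand side $-|K|f^{n+1}_{I,K}(s_K^{n+1}-1)^+\le 0$ cooperates since $f_I\ge 0$. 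This forces $(s_K^{n+1}-1)^+\equiv 0$, completing the induction step.
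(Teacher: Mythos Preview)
Your proof is correct, but it takes a genuinely different route from the paper's. The paper does \emph{not} sum over $K$: instead it fixes the single control volume $K$ where $s_K^{n+1}$ attains its minimum (respectively maximum) over $\mathcal{T}$, multiplies the relevant equation by $-(s_K^{n+1})^-$ (respectively $(s_K^{n+1}-1)^+$), and exploits minimality directly to obtain $\beta(s_L^{n+1})-\beta(s_K^{n+1})\ge 0$ for every neighbor $L$, together with the abstract monotonicity property (a) of $G_i$ in \eqref{ch4:Hypfluxes} to handle the convective flux. This is a pointwise discrete maximum-principle argument, and it uses only the abstract hypotheses (a)--(c) on the numerical fluxes $G_i$.

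Your argument is a global Stampacchia-type truncation: you sum over all $K$, use the discrete integration-by-parts lemma for the diffusion term (relying on the opposite monotonicities of $\beta$ and $s\mapsto s^-$), and control the convective and gravity fluxes cell by cell via the \emph{explicit} upwind formulae of the Remark, which make $G_1$ and $F_{1,K,L}$ vanish in the $s_K$-mobility factor whenever $M_1(s_K^{n+1})=0$. This is closer in spirit to the continuous PDE proof and arguably more systematic, but as written it leans on the specific form $G_1(a,b;c)=-M_1(b)c^++M_1(a)c^-$: with only the abstract properties (a)--(b), the sign of $G_1(s_K^{n+1},s_L^{n+1};dp)$ is not immediately controlled when \emph{both} $s_K^{n+1}$ and $s_L^{n+1}$ are negative, whereas the extremal-cell argument sidesteps this because $s_L^{n+1}\ge s_K^{n+1}$ at the minimum. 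Both proofs are valid in the present setting; the paper's is slightly more general with respect to the flux hypotheses, yours is more directly transposable from the continuous case.
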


\begin{proof}
Let us show by induction in $n$ that  for all $K \in \mathcal{T}, ~ s^n_K\geq 0$. The claim is true for $n=0$ and for all $K \in \mathcal{T}$. We argue by induction that for all $K \in \mathcal{T}$, the claim is true up to order $n$. We consider the control volume $K$ such that $s^{n+1}_K=\min{\{s^{n+1}_L\}}_{L\in \mathcal{T}}$, and we seek that $s^{n+1}_K\geq 0$.\\
For the above mentioned purpose, multiply the equation in \eqref{ch4:prob-discr1} by $-(s_{K}^{n+1})^-$, we obtain
  \begin{align}
  &-\abs{K}\phi_K\frac{\rho(p^{n+1}_K)s^{n+1}_K-\rho(p^{n}_K)s^{n}_K}{\Delta t}(s_{K}^{n+1})^- \notag\\
&+\sum_{L \in N(K) }
\frac{\abs{\sigma_{K,L}}}{d_{K,L}}\rho^{n+1}_{K,L}(\beta(s^{n+1}_{L})-\beta(s^{n+1}_{K}))(s_{K}^{n+1})^-
\notag\\
& - \sum_{L \in N(K) } \rho^{n+1}_{K,L}G_1(s^{n+1}_K,s^{n+1}_{L};dp^{n+1}_{K,L})(s_{K}^{n+1})^- \notag\\
 &-F^{(n+1)}_{1,K}(s_{K}^{n+1})^-- \abs{K}\rho(p^{n+1}_K)s^{n+1}_K f_{P,K}(s_{K}^{n+1})^-=0,\label{ch4:nonnegat:I}
   \end{align}
Observe that $\beta(s^{n+1}_{L})-\beta(s^{n+1}_{K})\geq 0$ (recall that $\beta$ is nondecreasing). Which implies
\begin{equation}\label{ch4:nonnegat:I:1}\begin{split}
  &\sum_{L \in N(K)}
  \frac{\abs{\sigma_{K,L}}}{d_{K,L}}(\beta(s^{n+1}_{L})-\beta(s^{n+1}_{K}))(s_{K}^{n+1})^-\geq 0.
\end{split}\end{equation}
The numerical flux $G_1$ is nonincreasing with respect to $s_L^{n+1}$ (see (a) in \eqref{ch4:Hypfluxes}), and consistence (see (c) in \eqref{ch4:Hypfluxes}), we get
\begin{equation}\begin{split}\label{ch4:nonnegat:I:2}
    G_1(s^{n+1}_K,s^{n+1}_{L};dp^{n+1}_{K,L})\,(s_{K}^{n+1})^-
    &\le G_1(s^{n+1}_K,s^{n+1}_{K};dp^{n+1}_{K,L})\,(s_{K}^{n+1})^-\\
    &=dp^{n+1}_{K,L}\,M_1(s^{n+1}_{K})\,(s_{K}^{n+1})^-=0.
\end{split}\end{equation}
Using the identity $s_{K}^{n+1}=({s_{K}^{n+1}})^+-(s_{K}^{n+1})^-$, and the mobility $M_1$ extended by zero on $]-\infty, 0]$, then $M_1(s^{n+1}_K) (s_{K}^{n+1})^- = 0$ and
\begin{multline}\label{ch4:nonnegat:I:3}
-F^{(n+1)}_{1,K}(s_{K}^{n+1})^-- \abs{K}\rho(p^{n+1}_K)s^{n+1}_K f_{P,K}^{n+1}(s_{K}^{n+1})^-\\
=\sum_{L \in N(K) } \rho^2(p^{n+1}_L) M_1(s^{n+1}_L){\bf g}_{L,K}(s_{K}^{n+1})^-
+\abs{K}\rho(p^{n+1}_K) f_{P,K}|(s_{K}^{n+1})^-|^2\geq 0.
\end{multline}
Then, we deduce from \eqref{ch4:nonnegat:I} that
$$
\abs{K}\phi_K\frac{\rho(p^{n+1}_K)|(s_{K}^{n+1})^-|^2+\rho(p^{n}_K)s^{n}_K(s_{K}^{n+1})^-}{\Delta t}\le 0,
$$
and from the nonnegativity of $s^{n}_{K}$, we obtain $(s_{K}^{n+1})^-=0$.
This implies that  $s_{K}^{n+1}\geq 0$ and
$$
0\le s^{n+1}_K \le s^{n+1}_L \text{ for all } n \in [0,N-1] \text{ and }L \in \mathcal{T}.
$$
%%%

To prove that $ s^{n+1}_K \leq 1 \text{ for all}\, n \in [0,N-1]\text{ and}\, K \in \mathcal{T}$.
 We argue by induction that for all $K\in \mathcal{T}$, $s^n_K\leq 1$.
Let the control volume $K$ such that $s^{n+1}_K=\max{\{s^{n+1}_L\}}_{L \in \mathcal{T}}$, and let us show that $s^{n+1}_K\leq 1$.\\
For the mentioned claim, we multiply the equation in \eqref{ch4:prob-discr2} by $(s_{K}^{n+1}-1)^+$,
\begin{multline}\label{ch4:nonnegat:IE}
\abs{K}\phi_K\frac{s^{n+1}_K-s^{n}_K}{\Delta t}(s_{K}^{n+1}-1)^+ - \sum_{L \in N(K) }
\frac{\abs{\sigma_{K,L}}}{d_{K,L}}(\beta(s^{n+1}_{L})-\beta(s^{n+1}_{K}))(s_{K}^{n+1}-1)^+\\
+ \sum_{L \in N(K) } G_2(s^{n+1}_K,s^{n+1}_{L};dp^{n+1}_{K,L})(s_{K}^{n+1}-1)^+ + F^{(n+1)}_{2,K}(s_{K}^{n+1}-1)^+ \\
+ \abs{K}(s^{n+1}_K -1)f_{P,K}(s_{K}^{n+1}-1)^+= -\abs{K} f_{I,K}^{n+1}(s_{K}^{n+1}-1)^+
\end{multline}
Since $\beta$ is nondecreasing, we get $\beta(s^{n+1}_{L})-\beta(s^{n+1}_{K})\leq 0$. This implies
\begin{equation}\label{ch4:nonnegat:IE:1}
- \sum_{L \in N(K) }
\frac{\abs{\sigma_{K,L}}}{d_{K,L}}(\beta(s^{n+1}_{L})-\beta(s^{n+1}_{K}))(s_{K}^{n+1}-1)^+\geq0.
\end{equation}
Next, we use the fact that the numerical flux $G_2$ is nondecreasing with respect to $s_{K}^{n+1}$ and consistence (see $(b)$ and $(c)$ in (\ref{ch4:Hypfluxes}) to deduce
\begin{equation}\begin{split}\label{ch4:nonnegat:IE:2}
    G_2(s^{n+1}_K,s^{n+1}_{L};dp^{n+1}_{K,L})\,(s_{K}^{n+1}-1)^+
    &\geq G_2(s^{n+1}_K,s^{n+1}_{K};dp^{n+1}_{K,L})\,(s_{K}^{n+1}-1)^+\\
    &=dp^{n+1}_{K,L}\,M_2(s^{n+1}_{K})\,(s_{K}^{n+1}-1)^+=0,
\end{split}\end{equation}
now, we rely on the extension of the mobility $M_2$ by zero on $[1, \infty[$, thus  $M_2(s^{n+1}_K)$ $ (s_{K}^{n+1}-1)^+=0$, to deduce
\begin{equation}\label{ch4:nonnegat:IE:3}
 F^{(n+1)}_{2,K} (s_{K}^{n+1}-1)^+
=\sum_{L \in N(K)}\rho_2 M_2(s^{n+1}_L){\bf g}_{K,L} (s_{K}^{n+1}-1)^+ \geq 0
\end{equation}
It is clear that the production source term in the left hand side of (\ref{ch4:nonnegat:IE}) is nonnegative and the injection source term on the right hand side is nonpositive.\\
Using the above estimates to deduce from (\ref{ch4:nonnegat:IE}) that,
\begin{multline} \label{ch4:s1}
\abs{K}\phi_K\frac{s^{n+1}_K-s^{n}_K}{\Delta t}(s_{K}^{n+1}-1)^+=
\\\frac{\abs{K}\phi_K}{\Delta t}\Big((s^{n+1}_K-1)(s_{K}^{n+1}-1)^+
-(s^{n}_K-1)(s_{K}^{n+1}-1)^+\Big)\leq0
\end{multline}
Using again the identity $(s_{K}^{n+1}-1)=({s_{K}^{n+1}}-1)^+-(s_{K}^{n+1}-1)^-$, and that\\ $s_{K}^{n}\leq1$ to deduce from \eqref{ch4:s1} that $({s_{K}^{n+1}}-1)^+=0$.  Consequently, we obtain
$$
 s^{n+1}_L \le s^{n+1}_K \leq 1 \text{ for all}\, n \in [0,N-1]\text{ and}\, L \in \mathcal{T}.
 $$
\end{proof}

%%%%%%%%%%%%%%%%%%%%%%%%%%%%%%%%%%%%%%%%%%%%
\subsection{Discrete a priori estimates}
%%%%%%%%%%%%%%%%%%%%%%%%%%%%%%%%%%%%%%%%%%%%%
Let us recall the following two lemmas ::
\begin{lemma}\label{ch4:lem-disc-sob}(Discrete Poincar\'{e} inequality)\cite{EyGaHe:book} \\
Let $\O$ be an open bounded polygonal subset of $\R^d$, $ d=2~ \text{or}~ 3 $, $\mathcal{T}$ an admissible finite volume mesh in the sense given in
 the  section \ref{ch4:sec:FVS}, and let $u$ be a function which is constant on each cell $K \in \mathcal{T}$, that is,
 $u(x)=u_K$ if $x\in K,$ then
$$\norm{u}_{L^2(\O)}\leq diam(\O) \norm{u}_{H_h(\O)},$$
where $\norm{\cdot}_{H_h(\O)}$ is the discrete $H_0^1$ norm.
\end{lemma}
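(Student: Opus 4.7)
The plan is to use the ray-integration technique that reproduces, at the discrete level, the continuous Poincar\'e inequality $\norm{u}_{L^2(\Omega)} \le \diam(\Omega)\norm{\nabla u}_{L^2(\Omega)}$ for $u\in H^1_0(\Omega)$. First I would extend $u$ by zero outside $\Omega$; this is compatible with the norm $\norm{\cdot}_{H_h(\Omega)}$, since the boundary contribution $\sum_{\sigma\subset\partial\Omega}(\abs{\sigma}/d_{K,\sigma})U_K^2$ is precisely the squared jump across the boundary with exterior value zero. Fix a unit direction $e\in\R^d$, and for every $x\in\Omega$ consider the ray $R_x = \{x-te : t\ge 0\}$, which exits $\Omega$ after a total length at most $\diam(\Omega)$.

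For $x$ in cell $K_0$, enumerate the mesh interfaces crossed by $R_x$ as $\sigma_1,\ldots,\sigma_m$, separating cells $K_0,K_1,\ldots,K_m$, with the convention $u_{K_m}=0$ (exterior value). The telescoping identity
$$u_{K(x)} = \sum_{i=1}^m \bigl(u_{K_{i-1}}-u_{K_i}\bigr),$$
combined with the Cauchy--Schwarz inequality applied with weights $w_i := d_{K_{i-1},K_i}\abs{\eta_{\sigma_i}\cdot e}$, yields
$$\abs{u_{K(x)}}^2 \le \Bigl(\sum_{i=1}^m w_i\Bigr)\sum_{i=1}^m\frac{\abs{u_{K_{i-1}}-u_{K_i}}^2}{w_i}.$$
Since each segment $[x_{K_{i-1}},x_{K_i}]$ is orthogonal to $\sigma_i$ (admissibility) and has length $d_{K_{i-1},K_i}$, the quantity $w_i$ is exactly its projection onto the direction $e$, so $\sum_i w_i \le \diam(\Omega)$.

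It remains to integrate the pointwise estimate over $x\in\Omega$ and to reorganize the double sum by interface via Fubini. For each fixed mesh interface $\sigma$, the set of $x\in\Omega$ whose ray $R_x$ crosses $\sigma$ is contained in a cylinder whose base is the projection of $\sigma$ onto $e^\perp$ (with area $\abs{\sigma}\abs{\eta_\sigma\cdot e}$) and whose height is at most $\diam(\Omega)$, so its Lebesgue measure is bounded by $\diam(\Omega)\abs{\sigma}\abs{\eta_\sigma\cdot e}$. This factor $\abs{\eta_\sigma\cdot e}$ cancels exactly the factor hidden in $1/w_i$, so that each interior interface $\sigma_{K,L}$ contributes precisely $(\abs{\sigma_{K,L}}/d_{K,L})\abs{U_L-U_K}^2$, and each boundary interface $\sigma\subset\partial\Omega$ contributes $(\abs{\sigma}/d_{K,\sigma})U_K^2$, which together reassemble $\norm{u_h}_{H_h(\Omega)}^2$ up to the normalization present in the definition of the discrete inner product. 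Altogether one obtains $\norm{u}_{L^2(\Omega)}^2 \le \diam(\Omega)^2 \norm{u}_{H_h(\Omega)}^2$.

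The main difficulty of this scheme is precisely the formal blow-up at interfaces whose normal $\eta_\sigma$ is nearly orthogonal to $e$, where $\abs{\eta_\sigma\cdot e}$ can be arbitrarily small and the pointwise estimate looks dangerous. The payoff is that only a correspondingly small cylinder of rays crosses such an interface, and this exact cancellation after integration is what both finishes the proof and renders the constant independent of the mesh-regularity parameter $\alpha$. A clean alternative, if one wants to avoid the direction-dependent accounting, is to integrate the same argument over $e$ on the unit sphere of $\R^d$; this introduces a dimension-dependent factor but is otherwise identical and also preserves the scaling in $\diam(\Omega)$.
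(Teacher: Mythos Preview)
The paper does not actually give its own proof of this lemma; it simply cites \cite{EyGaHe:book} and moves on. Your argument is precisely the standard ray--integration proof from that reference, so in substance you have reproduced what the paper defers to. The proof is correct.

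One step you state a little too quickly is the bound $\sum_i w_i \le \diam(\Omega)$. You say that each $w_i = d_{K_{i-1},K_i}\abs{\eta_{\sigma_i}\!\cdot e}$ is the length of the projection of the segment $[x_{K_{i-1}},x_{K_i}]$ onto the line $\R e$, which is true by admissibility; but projections of successive segments of a broken line can in general overlap, so their lengths need not add up to at most $\diam(\Omega)$. What makes it work here is that the ray crosses each interface $\sigma_i$ from the $K_{i-1}$ side to the $K_i$ side, hence $\eta_{K_{i-1},K_i}\cdot e$ has the same (strict) sign for every $i$. Consequently the broken line $x_{K_0},x_{K_1},\ldots$ is monotone in the $e$--direction, the projections do not overlap, and the telescoping sum is bounded by $\abs{(x_{K_m}-x_{K_0})\cdot e}\le \diam(\Omega)$. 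It would strengthen your write-up to make this monotonicity explicit.

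A second minor point: the paper's $H_h$ inner product carries a dimensional prefactor $l$ and, for interior edges, a double-counting factor $2$ from the summation $\sum_K\sum_{L\in N(K)}$. Your edge-by-edge sum $\sum_\sigma(\abs{\sigma}/d_\sigma)\abs{D_\sigma u}^2$ is therefore \emph{dominated} by $\norm{u}_{H_h}^2$, which is exactly the direction needed; your phrase ``up to the normalization present in the definition'' is correct but you might say explicitly that the normalization only helps. Finally, note the paper's Remark following the lemma: in the actual application the Dirichlet condition is imposed only on $\Gamma_w\subsetneq\partial\Omega$, and then your ray may exit through $\Gamma_i$ where $u$ is not set to zero. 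The version you proved is the full $H^1_0$ case stated in the lemma; the partial-Dirichlet extension requires connectedness of $\Omega$ and a slightly different argument, which the paper also only cites rather than proves.
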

\begin{remark}(Dirichlet condition on part of the boundary) This \it{lemma} gives a discrete Poincar\'{e} inequality for Dirichlet  boundary conditions
on the boundary $\partial \O$. In the case of Dirichlet condition on part of the boundary only, it is still possible to prove a discrete Poincar\'{e} inequality
provided that the polygonal bounded open set $\O$ is  connected.
\end{remark}
\begin{lemma}\label{ch4:lemm-integ-parts}(Discrete integration by parts formula)
Let $\O$ be an open bounded polygonal subset of $\R^d$, $\mathcal{T}$ an admissible finite volume mesh in the sense given in
 the  subsection \ref{ch4:sec:FVS}. Let $F_{K/L},~ K\in \mathcal{T}$  and $L\in N(K)$ be a value in $\R$ depends on $K$ and $L$ such that $F_{K/L}=-F_{L/K}$, and let $\varphi$ be a function which is constant on each cell $K \in \mathcal{T}$, that is,
$\varphi(x)=\varphi_K$ if $x\in K,$ then
\begin{equation}
\label{ch4:pppF}
\sum_{K\in \mathcal{T}}\sum_{L\in N(K)}F_{K/L} \varphi_K=-\frac{1}{2}\sum_{K\in \mathcal{T}}\sum_{L\in N(K)}F_{K/L}(\varphi_L-\varphi_K)
\end{equation}

Consequently, if $F_{K/L}= a_{K/L}(b_L-b_K)$, with $a_{K/L}=a_{L/K}$, then
\begin{equation}
\label{ch4:pppB}
\sum_{K\in \mathcal{T}}\sum_{L\in N(K)}a_{K/L}(b_L-b_K)\varphi_K=-\frac{1}{2}\sum_{K\in \mathcal{T}}\sum_{L\in N(K)}a_{K/L}(b_L-b_K)(\varphi_L-\varphi_K)
\end{equation}
\end{lemma}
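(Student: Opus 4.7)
The identity is a purely combinatorial consequence of the antisymmetry $F_{K/L}=-F_{L/K}$ together with the reciprocity of the neighbor relation $L\in N(K)\Leftrightarrow K\in N(L)$ implicit in the admissibility of $\mathcal{T}$. The plan is to produce two different expressions for the same double sum and then average them.

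First, I set $S:=\sum_{K\in\mathcal{T}}\sum_{L\in N(K)}F_{K/L}\,\varphi_K$ and re-express $S$ by exchanging the summation indices. Because the set of ordered pairs $\{(K,L):L\in N(K)\}$ is symmetric under swapping the two entries, I can rewrite
\begin{equation*}
S \;=\; \sum_{L\in\mathcal{T}}\sum_{K\in N(L)}F_{K/L}\,\varphi_K \;=\; \sum_{K\in\mathcal{T}}\sum_{L\in N(K)} F_{L/K}\,\varphi_L
\end{equation*}
after merely relabelling the dummy indices in the second step. Using now the antisymmetry hypothesis $F_{L/K}=-F_{K/L}$ gives the alternative representation $S=-\sum_{K}\sum_{L\in N(K)} F_{K/L}\,\varphi_L$.

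Averaging the original and alternative forms of $S$, I obtain
\begin{equation*}
S \;=\; \tfrac{1}{2}\Bigl(\sum_{K}\sum_{L\in N(K)} F_{K/L}\,\varphi_K \;-\; \sum_{K}\sum_{L\in N(K)} F_{K/L}\,\varphi_L\Bigr) \;=\; -\tfrac{1}{2}\sum_{K}\sum_{L\in N(K)} F_{K/L}\,(\varphi_L-\varphi_K),
\end{equation*}
which is exactly the identity (\ref{ch4:pppF}). Identity (\ref{ch4:pppB}) is then an immediate specialization: if $F_{K/L}:=a_{K/L}(b_L-b_K)$ with $a_{K/L}=a_{L/K}$, then $F_{L/K}=a_{L/K}(b_K-b_L)=-a_{K/L}(b_L-b_K)=-F_{K/L}$, so the antisymmetry hypothesis of the first part is automatic and (\ref{ch4:pppF}) yields (\ref{ch4:pppB}) verbatim.

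There is no genuine difficulty here; the only subtlety worth flagging is the treatment of the boundary edges in $N_{\text{ext}}(K)$, which are not paired with an opposite cell in the relabeling step. The statement as written concerns the symmetric (interior) part of the neighbor relation; when applied in Section \ref{ch4:sec-exist-est} to a $\varphi$ vanishing on $\Gamma_w$ the boundary contributions can be absorbed by extending the convention $\varphi_{\sigma}=0$ for $\sigma\in N_{\text{ext}}(K)\cap\Gamma_w$, so that the same relabeling argument applies without any modification.
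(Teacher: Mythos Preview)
Your proof is correct and follows essentially the same route as the paper: the paper reorganizes the double sum as a sum over edges (each interior edge contributing once from each side) and then uses the antisymmetry $F_{K/L}=-F_{L/K}$, which is exactly your index-relabeling step in different clothing, followed by the same averaging/symmetrization to obtain \eqref{ch4:pppF}. Your closing remark on boundary edges is extra commentary not present in the paper's proof but is harmless.
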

\begin{proof}
The sum $\sum_{K\in \mathcal{T}}\sum_{L\in N(K)}$ can be reorganized by edge. In fact, on each edge $\sigma_{K,L}$ between the mesh $K$ and $L$, there are two contributions : from $K$ to $L$ named $F_{K,L}\varphi_K$ and from $L$  to $K$ named $F_{L,K}\varphi_L$, then
\begin{equation}
\label{ch4:pppF1}
\sum_{K\in \mathcal{T}}\sum_{L\in N(K)}F_{K/L} \varphi_K=  \sum_{\sigma_{K,L}} (F_{K/L}\varphi_K + F_{L/K}\varphi_L)
\end{equation}
Using now the fact that $F_{K/L}$ is antisymmetric, then we have
\begin{equation}\label{ch4:pppF2}
\begin{split}
\sum_{K\in \mathcal{T}}\sum_{L\in N(K)}F_{K/L} \varphi_K&=  \sum_{\sigma_{K,L}} F_{K/L}(\varphi_K - \varphi_L)\\
&= \frac 1 2 \sum_{\sigma_{K,L}} \Big(F_{K/L}(\varphi_K - \varphi_L) +  F_{L/K}(\varphi_L - \varphi_K)\Big).
\end{split}
\end{equation}
Finally, reorganise the last summation on edge by mesh, we obtain exaclty \eqref{ch4:pppF}.
The equality \eqref{ch4:pppB} is a direct consequence of \eqref{ch4:pppF}  $F_{K/L}= a_{K/L}(b_L-b_K) = a_{K/L}(b_K-b_L) = -F_{L/K}$.
\end{proof}

We  derive in the next proposition, the main uniform estimates on the discrete gradient of the capillary term $\beta(s)$ and the discrete gradient of the global pressure $p$.
\begin{proposition}\label{ch4:prop:LPBV}
Let $(p_{K}^{n},s_{K}^{n})_{K \in \mathcal{T},n \in
\{0,\ldots,N\}}$, be a solution of the finite volume scheme
\eqref{ch4:prob-discr1}-\eqref{ch4:prob-discr2}. Then, there exist a constant $C>0$, depending
on $\Omega$, $T$, $s_{0}$, $p_{0}$ and $\alpha$ such that
\begin{equation}\label{ch4:est:grad-norm}
\begin{split}
&
\sum_{K\in \mathcal{T}} \abs{K}s_K^N \h(p_K^N) -\sum_{K\in \mathcal{T}} \abs{K}s_K^0 \h(p_K^0) \\
&
+\frac{c_1}{2}  \sum_{n=0}^{N-1}\Delta t\sum_{K\in \mathcal{T}}\sum_{L\in N(K)}
\frac{\abs{\sigma_{K,L}}}{d_{K,L}}
\abs{p_{K}^{n+1}-p_{L}^{n+1}}^2 \le C
\end{split}\end{equation}
and
\begin{equation}\label{ch4:est:grad-norm1}\begin{split}
&
\sum_{K\in \mathcal{T}} \abs{K}B(s_K^N)  -\sum_{K\in \mathcal{T}} \abs{K}B(s_K^0) \\
&
+\frac{1}{4} \sum_{n=0}^{N-1}\Delta t\sum_{K\in \mathcal{T}}\sum_{L\in N(K)}
\frac{\abs{\sigma_{K,L}}}{d_{K,L}}
\abs{\beta(s_{K}^{n+1})-\beta(s_{L}^{n+1})}^2 \le C
\end{split}\end{equation}
where $\disp B^\prime(s)= \beta(s) $, and $\disp \h(p)=g(p)+ \rho(p)p$ with $\disp g^\prime(p)=-\rho(p)$.
\end{proposition}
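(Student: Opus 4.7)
The plan for the pressure estimate \eqref{ch4:est:grad-norm} is to test the gas balance \eqref{ch4:prob-discr1} by $p_K^{n+1}$ and the water balance \eqref{ch4:prob-discr2} by $g(p_K^{n+1})$, then sum both over $K\in\mathcal{T}$ and $n\in\{0,\ldots,N-1\}$ (weighted by $\Delta t$) and add the two identities, systematically invoking the discrete integration-by-parts formula (Lemma \ref{ch4:lemm-integ-parts}) and the antisymmetry of the numerical convective fluxes (property (c) of \eqref{ch4:Hypfluxes}) to move the tested quantity from a cell to its neighbor. The capillary estimate \eqref{ch4:est:grad-norm1} is obtained in parallel by testing \eqref{ch4:prob-discr2} alone by $\beta(s_K^{n+1})$, using \eqref{ch4:est:grad-norm} to close the argument.

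The key algebraic identity for the time derivatives is the discrete chain rule
\begin{equation*}
\bigl(\rho(p^{n+1}_K) s^{n+1}_K - \rho(p^n_K) s^n_K\bigr)\, p^{n+1}_K + (s^{n+1}_K - s^n_K)\, g(p^{n+1}_K) \;\ge\; s^{n+1}_K h(p^{n+1}_K) - s^n_K h(p^n_K),
\end{equation*}
in which the remainder $-s^n_K\bigl[\rho(p^n_K)(p^{n+1}_K-p^n_K) - \int_{p^n_K}^{p^{n+1}_K}\rho(\zeta)\,d\zeta\bigr]$ has the correct sign because $s^n_K\ge 0$ (previous lemma) and $\rho$ is nondecreasing (hypothesis (H\ref{ch4:hyp:H6})); telescoping in $n$ produces the boundary term $s^N h(p^N) - s^0 h(p^0)$ of \eqref{ch4:est:grad-norm}. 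For the fluxes, the capillary contribution $\sum\frac{|\sigma_{K,L}|}{d_{K,L}}\rho^{n+1}_{K,L}(\beta(s^{n+1}_L)-\beta(s^{n+1}_K))(p^{n+1}_L-p^{n+1}_K)$ coming from the gas side cancels against the water side thanks to the identity $g(p^{n+1}_L)-g(p^{n+1}_K) = -\rho^{n+1}_{K,L}(p^{n+1}_L-p^{n+1}_K)$, which is built into the mean-value definition of $\rho^{n+1}_{K,L}$. The leftover convective contributions regroup as $\tfrac{1}{2}\sum \rho^{n+1}_{K,L}(G_2-G_1)(s^{n+1}_K,s^{n+1}_L; dp^{n+1}_{K,L})(p^{n+1}_L-p^{n+1}_K)$, and property (d) of \eqref{ch4:Hypfluxes} together with $\rho\ge\rho_m$ yields the lower bound $\tfrac{m_0\rho_m}{2}\sum\frac{|\sigma_{K,L}|}{d_{K,L}}|p^{n+1}_L-p^{n+1}_K|^2$, which fixes $c_1 = m_0\rho_m$.

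The gravity fluxes $F^{n+1}_{i,K}$ and the source contributions $\rho(p)s f_P\cdot p$, $(s-1)f_P\cdot g(p)$, $f_I\cdot g(p)$ are each handled by discrete IBP, Cauchy--Schwarz, the pointwise bound $|g(p)|\le \rho_M|p|$ from (H\ref{ch4:hyp:H6}), and Young's inequality; the resulting $\|p\|_{L_h^2}$ factors are controlled by the discrete Poincar\'e inequality (Lemma \ref{ch4:lem-disc-sob}) since $p$ vanishes on $\Gamma_w$, and choosing the Young parameter small enough absorbs a fraction of $\sum\frac{|\sigma_{K,L}|}{d_{K,L}}|p^{n+1}_L-p^{n+1}_K|^2$ into the dissipative term. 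For \eqref{ch4:est:grad-norm1}, convexity of $B$ (since $B''=\alpha\ge 0$) telescopes the time term into $\sum_K|K|(B(s^N_K)-B(s^0_K))$, discrete IBP on the capillary flux directly produces $\tfrac{1}{2}\sum\frac{|\sigma_{K,L}|}{d_{K,L}}|\beta(s^{n+1}_L)-\beta(s^{n+1}_K)|^2$, and the convective term is bounded by $|G_2|\le C|dp^{n+1}_{K,L}|$ and Young's inequality by a small fraction of the same quantity plus a multiple of $\sum\frac{|\sigma_{K,L}|}{d_{K,L}}|p^{n+1}_L-p^{n+1}_K|^2$, which is controlled by \eqref{ch4:est:grad-norm}. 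The main technical obstacle is the exact cancellation of the capillary terms in paragraph two, which depends on the specific mean-value definition of $\rho^{n+1}_{K,L}$; combined with property (d) of the numerical flux (the discrete counterpart of the nondegeneracy of the total mobility $M$), it is what promotes the joint convective contribution to a genuine $L^2$ bound on the discrete gradient of $p$, reproducing at the discrete level the continuous energy argument of \cite{A-GS08-2}.
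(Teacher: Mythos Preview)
Your proposal is correct and follows essentially the same strategy as the paper's proof: the same test functions $p_K^{n+1}$ and $g(p_K^{n+1})$, the same discrete chain-rule inequality for the time terms (the paper phrases it via concavity of $g$, you via monotonicity of $\rho$, which are equivalent), the same cancellation of the capillary cross-terms through the mean-value definition of $\rho^{n+1}_{K,L}$, the same use of property (d) of \eqref{ch4:Hypfluxes} for the dissipative lower bound, and the same Cauchy--Schwarz/Young/Poincar\'e treatment of gravity and sources. The second estimate is likewise obtained by testing \eqref{ch4:prob-discr2} with $\beta(s_K^{n+1})$ and invoking convexity of $B$, exactly as the paper does.
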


\begin{proof}
To prove the estimate \eqref{ch4:est:grad-norm}, we multiply the gas discrete equation \eqref{ch4:prob-discr1} and the water discrete equation \eqref{ch4:prob-discr2} respectively  by $p_K^{n+1}, \, g(p^{n+1}_{K})= \h(p^{n+1}_{K})-\rho(p^{n+1}_{K})p^{n+1}_{K}$ and adding them, then summing
the resulting equation over $K$ and $n$, and this yields to,

\begin{equation}\label{ch4:disc-est}
  E_{1}+E_{2}+E_{3}+E_{4}+E_{5}=0
  \end{equation}
where
\begin{align*}
 &E_{1} = \sn \sum_{K \in \mathcal{T}} \abs{K}\phi_K \Big( (\rho(p^{n+1}_K)s^{n+1}_K-\rho(p^{n}_K)s^{n}_K)\; p_K^{n+1}
+ (s^{n+1}_K-s^{n}_K)\; g(p_K^{n+1})\Big),\\
& E_{2} = -\sn \Delta t \sum_{K \in \mathcal{T}}
\sum_{L \in N(K)}\frac{\abs{\sigma_{K,L}}}{d_{K,L}} \Big( \rho^{n+1}_{K,L}(\beta(s^{n+1}_{L})-\beta(s^{n+1}_{K}))\; p_K^{n+1}+\\
&\hspace{5cm}(\beta(s^{n+1}_{L})-\beta(s^{n+1}_{K}))\; g(p_K^{n+1})\Big),\\
 &E_{3}=\sn\Delta t \sum_{K \in \mathcal{T}} \sum_{L \in N(K)}
\Big(\rho_{K,L}^{n+1}G_1(s^{n+1}_K,s^{n+1}_{L};dp^{n+1}_{K,L})\; p_K^{n+1}+\\
&\hspace{5cm}G_2(s^{n+1}_K,s^{n+1}_{L};dp^{n+1}_{K,L})\; g(p_K^{n+1})\Big),\\
& E_{4}=\sn\Delta t \sum_{K \in \mathcal{T}}\Big( F^{(n+1)}_{1,K,L}\; p_K^{n+1}
+F^{(n+1)}_{2,K,L}\; g(p_K^{n+1})\Big),\\
 &E_{5}=\sn\Delta t \sum_{K \in \mathcal{T}}\abs{K} \Big( \rho(p^{n+1}_K)s^{n+1}_K f_{P,K}^{n+1}\; p_K^{n+1}
+(s^{n+1}_K-1) f_{P,K}^{n+1}\;g(p_K^{n+1})+ \\
&\hspace{3cm}f_{I,K}^{n+1}\; g(p_K^{n+1})\Big).
\end{align*}
To handle  the first  term of the equality (\ref{ch4:disc-est}), let us prove that : for all $s\ge 0$ and $s^\star\ge 0$,
\begin{equation}
\label{ch4:magic}
 \bigl(\rho(p)s-\rho(p^\star)s^\star\bigr)p+(s-s^\star)(\h(p)-\rho(p)p)\ge \h(p)s-\h(p^\star)s^\star.
\end{equation}
Indeed, denote $g(p) = \h(p)-\rho(p)p$ then $g'(p) = -\rho(p)$,
\begin{multline*}
\bigl(\rho(p)s-\rho(p^\star)s^\star\bigr)p+(s-s^\star)(\h(p)-\rho(p)p) \\
= s(\h(p)-s^\star(\rho(p^\star)p+ g(p))
= s\h(p) -s^\star\h(p^\star) + s^\star\bigl(\h(p^\star)-\rho(p^\star)p- g(p) \bigr).
\end{multline*}
We have to show that
$$
\h(p^\star)-\rho(p^\star)p- g(p)  \ge 0.
$$
We expand this quantity as follows,
$$
\h(p^\star)-\rho(p^\star)p- g(p) = g(p^\star)+\rho(p^\star)(p^\star-p)-g(p) =
g(p^\star)-g(p) -g'(p^\star)(p^\star-p),
$$
as the function $g$ is concave $(g^{''}(p)=-\rho'(p)\le 0)$ , we get
$$
g(p)\le g(p^\star)+g'(p^\star)(p-p^\star).
$$
So, (\ref{ch4:magic}) is established, and this yields to

\begin{equation}\label{ch4:est(11+12)}
 \sum_{K\in \mathcal{T}} \abs{K}s_K^N \h(p_K^N) -\sum_{K\in \mathcal{T}} \abs{K}s_K^0 \h(p_K^0)\le E_{1}
\end{equation}
Integrating  by parts, see {\it lemma} \ref{ch4:lemm-integ-parts}, we obtain
\begin{multline*}
  E_{2}= \frac{1}{2}\sn \Delta t \sum_{K \in \mathcal{T}}
\sum_{L \in N(K)}\frac{\abs{\sigma_{K,L}}}{d_{K,L}} \Big(\beta(s^{n+1}_{L})-\beta(s^{n+1}_{K})\Big)\\
 \Big( \rho^{n+1}_{K,L} (p_L^{n+1}-p_K^{n+1})+ (g(p_L^{n+1})-g(p_K^{n+1}))\Big).
\end{multline*}
Due to the correct choice of the density of the gas on each interface,
$$\rho^{n+1}_{K,L}=
\frac{(g(p_L^{n+1})-g(p_K^{n+1}))}{(p_K^{n+1}-p_L^{n+1})}$$
we succeed to obtain,
\begin{equation}\label{ch4:est(12+22)}
  E_{2}=0.
\end{equation}
The choice of the density on the interfaces is  the key point to vanish the dissipative term on saturation and obtain a uniform estimate on the discrete gradient of pressure $p$.\\
 Using the fact that the numerical fluxes $G_1$ and $G_2$ are conservative in the sense of (c) in (\ref{ch4:Hypfluxes}), we can apply {\it lemma} \ref{ch4:lemm-integ-parts} and we obtain
\begin{multline*} \label{ch4:}
  E_{3}=\frac{1}{2}\sn\Delta t \sum_{K \in \mathcal{T}} \sum_{L \in N(K)}
\rho_{K,L}^{n+1}\Big( G_2(s^{n+1}_K,s^{n+1}_{L};dp^{n+1}_{K,L})\\
-G_1(s^{n+1}_K,s^{n+1}_{L};dp^{n+1}_{K,L})\Big) \Big(p_L^{n+1}-p_K^{n+1}\Big),
\end{multline*}
Recall that inequality (\ref{ch4:tmob}),
$$\Big( G_2(a,b,c)-G_1(a,b,c)\Big) c=M(b){c^+}^2 + M(a){c^-}^2\geq m_0 c^2,$$
this with the hypothesis (H \ref{ch4:hyp:H6}) allow us to deduce that,
\begin{equation}\label{ch4:est(13+23)}
 m_0 \rho_m  \sum_{n=0}^{N-1}\Delta t\sum_{K\in \mathcal{T}}\sum_{L\in N(K)}
\frac{\abs{\sigma_{K,L}}}{d_{K,L}}
\abs{p_{K}^{n+1}-p_{L}^{n+1}}^2\le E_{3}.
\end{equation}
To handle the other terms of the equality (\ref{ch4:disc-est}), firstly let us remark that the numerical flux satisfies $F_{1,K,L}^{n+1}= -F_{1,L,K}^{n+1}$ and $F_{2,K,L}^{n+1}= -F_{2,L,K}^{n+1}$, so we integrate by parts and we obtain
$$
E_{4}= \frac1 2\sn\Delta t \sum_{K \in \mathcal{T}}\sum_{L \in N(K)} |\sigma_{K,L}|\Big (  F^{(n+1)}_{1,K,L} (p_K^{n+1} - p_L^{n+1} )
+F^{(n+1)}_{2,K,L} (g(p_K^{n+1}) - g(p_L^{n+1} ) \Big),
$$
use now the fact that the  mobilities and densities are bounded from (H\ref{ch4:hyp:H3})-(H\ref{ch4:hyp:H6}), and  the map $g$ is uniformly Lipschitz, we have, there exists a positive constant independent of $\Delta t$ and $h$ such that
$$
|E_{4}| \le C_1\sn\Delta t \sum_{K \in \mathcal{T}}\sum_{L \in N(K)} |\sigma_{K,L}| |p_K^{n+1} - p_L^{n+1}|.
$$
From the following inequality
$
|\sigma_{K,L}| = (|\sigma_{K,L}| d_{K,L})^{\frac{1}{2}}
\frac{ |\sigma_{K,L}|^{\frac{1}{2}}}{ d_{K,L} ^{\frac{1}{2}}}
$,
 and apply the Cauchy-Schwarz inequality to obtain
\begin{align*}
|E_{4}| \le &C_1\sn\Delta t \sum_{K \in \mathcal{T}}\sum_{L \in N(K)} |\sigma_{K,L}| d_{K,L}\\
&+\frac{m_0 \rho_m}{4}  \sum_{n=0}^{N-1}\Delta t\sum_{K\in \mathcal{T}}\sum_{L\in N(K)}
\frac{\abs{\sigma_{K,L}}}{d_{K,L}} \abs{p_{K}^{n+1}-p_{L}^{n+1}}^2\\
  \le & C_1 T |\Omega| + \frac{m_0 \rho_m}{4}  \sum_{n=0}^{N-1}\Delta t\sum_{K\in \mathcal{T}}\sum_{L\in N(K)}
\frac{\abs{\sigma_{K,L}}}{d_{K,L}} \abs{p_{K}^{n+1}-p_{L}^{n+1}}^2.
\end{align*}
The last term will be absorbed by the dissipative term on global pressure from the estimate \eqref{ch4:est(13+23)}.

In order to estimate $E_5$, using again the fact that the densities are bounded and the map $g$ is sublinear (i.e.$ |g(p)|\le C |p|$), we have
$$
 \abs{E_{5}} \leq
 C_1 \sum_{n=0}^{N-1}\Delta t\sum_{K\in \mathcal{T}} \abs{K} (f_{P,K}+ f_{I,K}^{n+1} ) \abs{p_K^{n+1}}
$$
we apply Holder inequality to deduce,
\begin{align*}
 \abs{E_{5}} \leq
  &C_1 \big(\sum_{n=0}^{N-1}\Delta t \sum_{K\in \mathcal{T}} \abs{K} \abs{f_{P,K}+ f_{I,K}^{n+1}}^2\big)^{\frac{1}{2}}  \big(\sum_{n=0}^{N-1}\Delta t\sum_{K\in \mathcal{T}} \abs{K} \abs{p_K^{n+1}}^2\big)^{\frac{1}{2}}\\
 \leq & C_1(\norm{f_P+f_I}_{L^2(Q_T)}\big( \sum_{n=0}^{N-1}\Delta t
\norm{p^{n+1}_h}_{L^2({\O})}^2\big)^{\frac{1}{2}}
 \end{align*}
Now, from the discrete Poincar\'e inegqality lemma  \ref{ch4:lem-disc-sob},  leads to,
$$
 \abs{E_{5}} \leq
 C_2 \big( \sum_{n=0}^{N-1}\Delta t
\norm{p^{n+1}_h}_{H_h}^2\big)^{\frac{1}{2}},
 $$
 where $C_2$ is a constant depends only on $\norm{f_P+f_I}_{L^2(Q_T)}$.
Finally, under the assumption (H\ref{ch4:hyp:H4})  on the source terms and as an application of Young's inequality ($a\cdot b \leq \eta a^2+ \frac{b^2}{4\eta}$.), we get
\begin{align}\label{ch4:est(4,5,6)}
 \abs{E_{5}} \leq C3 + \frac{m_0 \rho_m }{4} \sum_{n=0}^{N-1}\Delta t\sum_{K\in \mathcal{T}}\sum_{L\in N(K)}
\frac{\abs{\sigma_{K,L}}}{d_{K,L}}
\abs{p_{K}^{n+1}-p_{L}^{n+1}}^2
\end{align}
this estimate (\ref{ch4:est(4,5,6)}), with (\ref{ch4:est(11+12)}), (\ref{ch4:est(12+22)})  and (\ref{ch4:est(13+23)}) achieve the proof of (\ref{ch4:est:grad-norm}).

To prove the estimate \eqref{ch4:est:grad-norm1}, we multiply  the water discrete equation in \eqref{ch4:prob-discr2}   by $\beta(s^{n+1}_{K}) $  then summing
the resulting equation over $K$ and $n$, and this yields to

\begin{equation}\label{ch4:disc-est1}
  J_1+ J_2 + J_3  = 0
  \end{equation}
where
\begin{equation*}
  \begin{split}
& J_{1} = \sn \sum_{K \in \mathcal{T}}\abs{K}\phi_K(s^{n+1}_K-s^{n}_K)\cdot \beta(s_K^{n+1}),
\\
& J_{2} = -\sn \Delta t \sum_{K \in \mathcal{T}}
\sum_{L \in N(K)}\frac{\abs{\sigma_{K,L}}}{d_{K,L}}(\beta(s^{n+1}_{L})-\beta(s^{n+1}_{K}))\cdot \beta(s_K^{n+1}),
\\
& J_{3}=\sn\Delta t \sum_{K \in \mathcal{T}} \sum_{L \in N(K)}
G_2(s^{n+1}_K,s^{n+1}_{L};dp^{n+1}_{K,L})\cdot \beta(s_K^{n+1})\\
&~~~~+\sn\Delta t \sum_{K \in \mathcal{T}}F^{(n+1)}_{2,K,L}\cdot \beta(s_K^{n+1})
\\
&~~~~+\sn\Delta t \sum_{K \in \mathcal{T}} \abs{K}\Big((s^{n+1}_K-1) f_{P,K}^{n+1} + f_{I,K}^{n+1}\Big) \cdot \beta(s_K^{n+1}).
\end{split}
\end{equation*}

Let $\displaystyle B(s)=\int_0^s \beta(r)\,dr$. From the convexity of $B(s)$
(recall that $\beta''(s)=a(s)\ge 0$), we obtain
\begin{equation}\label{ch4:est:E1}
\begin{split}
J_1 &= \sn \sum_{K \in \mathcal{T}} \abs{K}(s^{n+1}_{K}-s^{n}_{K}) \beta(s^{n+1}_{K})\\
&\ge \sn \sum_{K \in \mathcal{T}} \abs{K}(B(s^{n+1}_{K})-{B}(s^{n}_{K}))\\
&=\sum_{K \in \mathcal{T}} \abs{K}
\left({B}(s^{N}_{K})-{B}(s^{0}_{K})\right)
\end{split}
\end{equation}

Applying {\it lemma} \ref{ch4:lemm-integ-parts}, we obtain
\begin{equation}\begin{split} \label{ch4:est:E2}
 J_{2} &= \frac{1}{2}\sn \Delta t \sum_{K \in \mathcal{T}}
\sum_{L \in N(K)}\frac{\abs{\sigma_{K,L}}}{d_{K,L}}(\beta(s^{n+1}_{L})-\beta(s^{n+1}_{K}))\cdot (\beta(s_L^{n+1})-\beta(s_K^{n+1}))
\\&=\frac{1}{2}\sn \Delta t \sum_{K \in \mathcal{T}}
\sum_{L \in N(K)}\frac{\abs{\sigma_{K,L}}}{d_{K,L}}\rho^{n+1}_{K,L}\abs{\beta(s^{n+1}_{L})-\beta(s^{n+1}_{K})}^2.
\end{split}
\end{equation}

The other terms in the equality (\ref{ch4:disc-est1}) can be treated as (\ref{ch4:est(4,5,6)}), using Holder's and Young's inequalities with the help of \textit{lemma} \ref{ch4:lem-disc-sob} and  the assumptions on mobilities (H\ref{ch4:hyp:H3}), source terms (H\ref{ch4:hyp:H4}) and densities (H\ref{ch4:hyp:H6}) to get,

\begin{equation}\begin{split} \label{ch4:est:E3456}
\abs{ J_3 }\leq C + \frac{1}{4} \sum_{n=0}^{N-1}\Delta t\sum_{K\in \mathcal{T}}\sum_{L\in N(K)}
\frac{\abs{\sigma_{K,L}}}{d_{K,L}}
\abs{\beta(s_{K}^{n+1})-\beta(s_{L}^{n+1})}^2
\end{split}
\end{equation}
Now, collecting (\ref{ch4:est:E1}), (\ref{ch4:est:E2}) and (\ref{ch4:est:E3456}) we obtain,
\begin{equation*}\label{ch4:}\begin{split}
&
\sum_{K\in \mathcal{T}} \abs{K}B(s_K^N)  -\sum_{K\in \mathcal{T}} \abs{K}B(s_K^0) \\
&
+\frac{1}{4} \sum_{n=0}^{N-1}\Delta t\sum_{K\in \mathcal{T}}\sum_{L\in N(K)}
\frac{\abs{\sigma_{K,L}}}{d_{K,L}}
\abs{\beta(s_{K}^{n+1})-\beta(s_{L}^{n+1})}^2 \le C,
\end{split}\end{equation*}
for some constant $C\geq0.$ This concludes the proof of proposition \ref{ch4:prop:LPBV}.
\end{proof}

\section{Existence of the finite volume scheme}\label{ch4:sec-exist-est}

The existence of a solution to the finite volume scheme
will be obtained with the help of the following lemma
proved in \cite{lionsj} and \cite{Tem:2001}.

\begin{lemma}\label{ch4:lem:exist-classic}
Let $\mathcal{A}$ be a finite dimensional Hilbert space with scalar product $[\cdot,\cdot]$
and norm $\norm{\cdot}$, and let $\mathcal{P}$ be a continuous mapping from $\mathcal{A}$
into itself such that
$$
[\mathcal{P}(\xi),\xi]>0 \text{ for } \norm{\xi} =r>0.
$$
Then there exists $\xi \in \mathcal{A}$ with $\norm{\xi} \le r$ such that
$$
\mathcal{P}(\xi)=0.
$$
\end{lemma}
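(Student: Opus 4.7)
The plan is to establish this as a standard consequence of Brouwer's fixed point theorem, via a contradiction argument. Since $\mathcal{A}$ is finite dimensional, the closed ball $\overline{B}_r = \{\xi \in \mathcal{A} : \norm{\xi} \le r\}$ is compact and convex, which is exactly the setting in which Brouwer applies.

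First I would suppose, for contradiction, that $\mathcal{P}(\xi) \ne 0$ for every $\xi \in \overline{B}_r$. Under this assumption the map
\[
\Phi : \overline{B}_r \longrightarrow \overline{B}_r, \qquad \Phi(\xi) = -\,r\,\frac{\mathcal{P}(\xi)}{\norm{\mathcal{P}(\xi)}}
\]
is well defined and continuous (the continuity of $\mathcal{P}$ plus nonvanishing of $\norm{\mathcal{P}(\cdot)}$), and its image lies on the sphere $\partial B_r \subset \overline{B}_r$. By Brouwer's theorem, $\Phi$ admits a fixed point $\xi_0 \in \overline{B}_r$. Since $\Phi(\xi_0) \in \partial B_r$, necessarily $\norm{\xi_0} = r$ and
\[
\xi_0 = -\,\frac{r}{\norm{\mathcal{P}(\xi_0)}}\,\mathcal{P}(\xi_0), \qquad \text{i.e.} \qquad \mathcal{P}(\xi_0) = -\,\frac{\norm{\mathcal{P}(\xi_0)}}{r}\,\xi_0.
\]

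Taking the inner product of the second identity with $\xi_0$ yields
\[
[\mathcal{P}(\xi_0),\xi_0] = -\,\frac{\norm{\mathcal{P}(\xi_0)}}{r}\,\norm{\xi_0}^2 = -\,r\,\norm{\mathcal{P}(\xi_0)} < 0,
\]
which directly contradicts the hypothesis $[\mathcal{P}(\xi),\xi] > 0$ on the sphere $\norm{\xi} = r$. Hence the original assumption fails and there must exist some $\xi \in \overline{B}_r$ with $\mathcal{P}(\xi) = 0$.

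The only subtle point, and the one I would take care to justify cleanly, is the construction of $\Phi$: the nonvanishing of $\mathcal{P}$ on the whole closed ball is precisely what guarantees both the well-posedness and the continuity of $\xi \mapsto \mathcal{P}(\xi)/\norm{\mathcal{P}(\xi)}$. Everything else is a mechanical application of Brouwer and a sign check; no compactness or regularity beyond the finite dimensionality of $\mathcal{A}$ is needed, which is why this lemma will be used downstream to produce a solution of the nonlinear algebraic system \eqref{ch4:prob-discr1}--\eqref{ch4:prob-discr2} once the a priori bounds of Proposition \ref{ch4:prop:LPBV} are rephrased as a coercivity estimate $[\mathcal{P}(\xi),\xi]>0$ on a sufficiently large sphere.
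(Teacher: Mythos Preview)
Your proof is correct and is precisely the classical argument; the paper, however, does not give its own proof of this lemma but simply cites Lions and Temam for it. The Brouwer-based contradiction you wrote out is exactly the proof one finds in those references, so there is nothing to compare.
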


The existence for the finite volume scheme is given in
\begin{proposition}
\label{ch4:prop:exist-fv}
Let $\mathcal{D}$ be an admissible discretization of $Q_{T}$.
Then the problem \eqref{ch4:prob-discr1}-\eqref{ch4:prob-discr2} admits at least one solution
$(p^{n}_{K},s^{n}_{K})_{(K,n) \in \Omega_R\times  \{0,\ldots,N\}}$.
\end{proposition}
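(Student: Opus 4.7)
The plan is to argue by induction on $n\in\{0,\ldots,N-1\}$: given $(p_K^n,s_K^n)_{K\in\mathcal T}$ with $0\le s_K^n\le 1$ (true for $n=0$, and propagated by the discrete maximum principle of Section~\ref{ch4:sec:basic-apriori}), the goal is to produce $(p_K^{n+1},s_K^{n+1})_{K\in\mathcal T}$ satisfying the coupled nonlinear system \eqref{ch4:prob-discr1}--\eqref{ch4:prob-discr2}. I will apply Lemma~\ref{ch4:lem:exist-classic} to a continuous residual map $\mathcal{P}$ on the finite-dimensional Hilbert space $\mathcal{A}=\mathbb{R}^{|\mathcal T|}\times\mathbb{R}^{|\mathcal T|}$ whose zeros are exactly the sought $(p^{n+1},s^{n+1})$. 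For $(P,S)\in\mathcal{A}$, define $\mathcal{P}(P,S)=(\mathcal{P}_{1,K},\mathcal{P}_{2,K})_{K\in\mathcal T}$ by setting $\mathcal{P}_{i,K}$ to be the left-hand side of equation \eqref{ch4:prob-discr1} (for $i=1$) or \eqref{ch4:prob-discr2} (for $i=2$) with $(p_K^{n+1},s_K^{n+1})$ replaced by $(P_K,S_K)$ and the (known) $(p_K^n,s_K^n)$ fixed, normalized by $|K|\phi_K$. Continuity of $\mathcal{P}$ is immediate from continuity of $\rho$, $\beta$, the mobilities $M_i$ (extended by constants outside $[0,1]$), and the numerical fluxes $G_i$ and $F_{i,K,L}$.

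The core step is verifying the sign condition $[\mathcal{P}(\xi),\xi]>0$ for $\|\xi\|=r$ with $r$ large enough; this will follow from a one-step analogue of the energy estimate of Proposition~\ref{ch4:prop:LPBV}. The natural pairing is to test $\mathcal{P}_{1,K}$ against $P_K$ and $\mathcal{P}_{2,K}$ against $g(P_K)=\mathcal{H}(P_K)-\rho(P_K)P_K$: the pivotal inequality \eqref{ch4:magic} produces the nonnegative contribution $\sum_K|K|\phi_K\bigl(S_K\mathcal{H}(P_K)-s_K^n\mathcal{H}(p_K^n)\bigr)$ from the time-derivative terms, the deliberate choice of the interface density $\rho^{n+1}_{K,L}$ cancels the mixed capillary/pressure term (so that $E_2=0$), the total-mobility bound (d) of \eqref{ch4:Hypfluxes} supplies the dissipation $m_0\rho_m\,\|\nabla_h P\|_{L^2(\Omega)}^2$, and the gravity and source contributions are absorbed by Cauchy--Schwarz, Young's inequality and the discrete Poincaré inequality (Lemma~\ref{ch4:lem-disc-sob}), exactly as in the derivation of \eqref{ch4:est:grad-norm}. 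Combined with the boundedness of the $S$-component (which, via the extension of $M_i$ and $\beta$ by constants outside $[0,1]$, can be enforced by a one-sided truncation in the construction of $\mathcal{P}$ without changing its zeros inside the physical range), one reaches an estimate of the form $[\mathcal{P}(\xi),\xi]\ge c\,\|\xi\|_{\mathcal{A}}^2-C_n$ with $c>0$ and $C_n$ depending only on the step-$n$ data. Choosing $r^2>C_n/c$ then gives the required sign condition, and Lemma~\ref{ch4:lem:exist-classic} delivers the desired zero, which is $(p_K^{n+1},s_K^{n+1})_{K\in\mathcal T}$.

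The hard part will be reconciling this natural ``energy'' pairing $(P,g(P))$ with the symmetric framework of Lemma~\ref{ch4:lem:exist-classic}, which demands the pairing of $\mathcal{P}(\xi)$ against $\xi$ itself. Since $g'=-\rho$ with $-\rho_M\le g'\le-\rho_m$ under (H\ref{ch4:hyp:H6}), the map $g:\mathbb{R}\to\mathbb{R}$ is a bi-Lipschitz bijection, so the change of unknown $\xi=(Q,S):=(g(P),S)$ (with the associated reformulation of $\mathcal{P}$ in the new variables) makes the energy pairing coincide, up to equivalent constants, with the symmetric one $(\xi,\xi)$. This is the only non-routine ingredient: once it is in place, the computation reduces to a one-step transcription of the a priori estimates already proved in Section~\ref{ch4:sec:basic-apriori}, and iterating over $n\in\{0,\ldots,N-1\}$ yields the full discrete trajectory and hence the proposition.
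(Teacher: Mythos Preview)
Your overall plan---induction on $n$, apply Lemma~\ref{ch4:lem:exist-classic} to a residual map built from the one-step nonlinear system, and feed it the a~priori energy identities of Section~\ref{ch4:sec:basic-apriori}---matches the paper's strategy. The gap is in the last paragraph, where you try to reconcile the energy pairing with the symmetric one.

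Your proposed change of unknown $\xi=(g(P),S)$ does not achieve what you claim. With this choice, the symmetric bracket $[\mathcal{P}(\xi),\xi]$ becomes $\sum_K\bigl(\mathcal{P}_{1,K}\,g(P_K)+\mathcal{P}_{2,K}\,S_K\bigr)$, which is \emph{not} the energy pairing $\sum_K\bigl(\mathcal{P}_{1,K}\,P_K+\mathcal{P}_{2,K}\,g(P_K)\bigr)$ you computed. The fact that $g$ is bi-Lipschitz makes \emph{norms} $\|g(P)\|$ and $\|P\|$ comparable, but it does not make the inner products $(\mathcal{P}_1,g(P))$ and $(\mathcal{P}_1,P)$ comparable; since $g$ is strictly decreasing they can even have opposite signs. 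Moreover, relying on the pressure estimate alone and handling $S$ by truncation does not give the sign condition on the whole sphere $\|\xi\|=r$: once $\mathcal{P}$ no longer depends on $S$ outside $[0,1]$, the contribution $(\mathcal{P}_2,S)$ stays bounded while $\|S\|$ can be made arbitrarily large, so $[\mathcal{P}(\xi),\xi]$ need not be positive along the direction $\xi=(0,S)$.

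The paper fixes both issues with a different change of variables: it sets $v_K=g(p_K)+\beta(s_K)$ and works with $\xi=(p_{\mathcal{M}},v_{\mathcal{M}})$, defining $\mathcal{P}_h=\mathcal{T}_h\circ\mathcal{F}^{-1}$ where $\mathcal{F}(p,s)=(p,g(p)+\beta(s))$ is a homeomorphism of $\mathbb{R}^{2\mathcal{M}}$. Then the symmetric bracket is exactly
\[
\sum_K\Bigl(\mathcal{T}_{1,K}\,p_K+\mathcal{T}_{2,K}\,\bigl(g(p_K)+\beta(s_K)\bigr)\Bigr),
\]
which is the sum of \emph{both} one-step energy identities of Proposition~\ref{ch4:prop:LPBV}: the pressure estimate (test \eqref{ch4:prob-discr1} with $p$, \eqref{ch4:prob-discr2} with $g(p)$) \emph{and} the saturation estimate (test \eqref{ch4:prob-discr2} with $\beta(s)$). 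This yields a lower bound by $C\bigl(\|p_h^{n+1}\|_{H_h}^2+\|\beta(s_h^{n+1})\|_{H_h}^2\bigr)-C'$, and since $g$ is Lipschitz one has $\|(p_{\mathcal{M}},v_{\mathcal{M}})\|_{\mathbb{R}^{2\mathcal{M}}}\le C\bigl(\|p_h^{n+1}\|_{H_h}+\|\beta(s_h^{n+1})\|_{H_h}\bigr)$ via the discrete Poincar\'e inequality, which closes the argument. So the missing idea is precisely to build the second energy estimate (the $\beta(s)$ one) into the change of variables, rather than trying to circumvent coercivity in $S$ by truncation.
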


\begin{proof}
At the beginning of the proof, we set the following notations;
\begin{align*}
\mathcal{M}:=Card(\mathcal{T})\\
s_{\mathcal{M}}:=\{s^{n+1}_K \}_{K\in\mathcal{T}} \in\R^{\mathcal{M}},\\
p_{\mathcal{M}}:=\{p^{n+1}_K \}_{K\in\mathcal{T}} \in\R^{\mathcal{M}}
%\varphi_R:={\{\varphi_K\}}_{K\in \O_R} \in \R^{\mathcal{N}}
\end{align*}
We define the map $\mathcal{T}_h:\R^\mathcal{M}\times\R^\mathcal{M}\longrightarrow \R^\mathcal{M}\times\R^\mathcal{M},$

$$\mathcal{T}_h(s_{\mathcal{M}}, p_{\mathcal{M}})=(\{\mathcal{T}_{1,K}\}_{K\in \mathcal{T}}, \{\mathcal{T}_{2,K}\}_{K\in \mathcal{T}})\,\, \text{where,} $$
\begin{align}
&\mathcal{T}_{1,K}=\abs{K}\phi_K\frac{\rho(p^{n+1}_K)s^{n+1}_K-\rho(p^{n}_K)s^{n}_K}{\Delta t}
-\sum_{L \in N(K) }
\frac{\abs{\sigma_{K,L}}}{d_{K,L}}\rho^{n+1}_{K,L}(\beta(s^{n+1}_{L})-\beta(s^{n+1}_{K}))
\notag\\
&\qquad \qquad + \sum_{L \in N(K) } \rho^{n+1}_{K,L}G_1(s^{n+1}_K,s^{n+1}_{L};dp^{n+1}_{K,L}) +F^{(n+1)}_{1,K,L}+ \abs{K}\rho(p^{n+1}_K)s^{n+1}_K f_{P,K}^{n+1},\label{ch4:def-T1} \\
&\mathcal{T}_{2,K}=\abs{K}\phi_K\frac{s^{n+1}_K-s^{n}_K}{\Delta t}- \sum_{L \in N(K) }
\frac{\abs{\sigma_{K,L}}}{d_{K,L}}(\beta(s^{n+1}_{L})-\beta(s^{n+1}_{K}))\notag\\
&\qquad \qquad + \sum_{L \in N(K) } G_2(s^{n+1}_K,s^{n+1}_{L};dp^{n+1}_{K,L}) + F^{(n+1)}_{2,K,L}+ \abs{K}(s^{n+1}_K-1) f_{P,K}^{n+1}
+\abs{K} f_{I,K}^{n+1}\label{ch4:def-T2}.
\end{align}
Note that $\mathcal{T}_h$ is well defined as a continues function. Also we define the following homeomorphism $\mathcal{F}:\R^\mathcal{M}\times\R^\mathcal{M}\mapsto \R^\mathcal{M}\times\R^\mathcal{M}$ such that,
$$\mathcal{F}(p_{\mathcal{M}},s_{\mathcal{M}})=(p_{\mathcal{M}}, v_{\mathcal{M}})$$
where, $v_{\mathcal{M}}=\{g(p^{n+1}_K)+\beta(s^{n+1}_K)\}_{K\in \mathcal{T}}.$\\
%$$\mathcal{F}(\{p^{n+1}_K\}_{K\in \O_R}, \{g(p^{n+1}_K)+\beta(s^{n+1}_K)\}_{K\in \O_R})=(\{s^{n+1}_K\}_{K\in \O_R},\{p^{n+1}_K\}_{K\in \O_R}).$$
Now let us consider the following continues mapping $\mathcal{P}_h$ defined as
\begin{align*}
\mathcal{P}_h(p_{\mathcal{M}}, v_{\mathcal{M}})&=\mathcal{T}_h\circ\mathcal{F}^{-1}(p_{\mathcal{M}}, v_{\mathcal{M}})\\
&=\mathcal{T}_h(s_{\mathcal{M}}, p_{\mathcal{M}}).
\end{align*}
Our goal now is to show that,
\begin{align}
[\mathcal{P}_h(p_{\mathcal{M}}, v_{\mathcal{M}}),(p_{\mathcal{M}}, v_{\mathcal{M}})] > 0,~~~~
 \text{for}\,\, \norm{(p_{\mathcal{M}}, v_{\mathcal{M}})}_{\R^{2\mathcal{M}}}=r>0,\label{ch4:goal-exis}
\end{align}
and for a sufficiently large $r$.\\
We observe that
\begin{equation*}%\label{ch4:eq:disc-exist:4}
\begin{split}
[\mathcal{P}_h(p_{\mathcal{M}}, v_{\mathcal{M}}),(p_{\mathcal{M}}, v_{\mathcal{M}})] \ge &\frac{1}{\Delta t}
\sum_{K\in \mathcal{T}} \abs{K}s_K^{n+1} H(p_K^{n+1}) -\frac{1}{\Delta t} \sum_{K\in \mathcal{T}} \abs{K}s_K^n H(p_K^n)\\
 +&\frac{1}{\Delta t} \sum_{K \in \mathcal{T}} \abs{K} {B}(s^{n+1}_{K})-\frac{1}{\Delta t} \sum_{K\in \mathcal{T}} {B}(s^{n}_{K})\\
 +&C(m_0,\rho_m) \norm{p^{n+1}_{h}}^2_{H_h(\Omega)}+1/2\norm{\beta(s^{n+1}_{h})}^2_{H_h(\Omega)}-C,
\end{split}
\end{equation*}
for some constants $C(m_0,\rho_m),C>0$. This implies that
\begin{equation}\label{ch4:eq:disc-exist:5}
\begin{split}
[\mathcal{P}_h(p_{\mathcal{M}}, v_{\mathcal{M}}),(p_{\mathcal{M}}, v_{\mathcal{M}})]  \ge & -\frac{1}{\Delta t} \sum_{K\in \mathcal{T}} \abs{K}s_K^n H(p_K^n)
 -\frac{1}{\Delta t} \sum_{K\in \mathcal{T}} {B}(s^{n}_{K})\\
& +C(\norm{p^{n+1}_{h}}^2_{H_h(\Omega)}+\norm{\beta(s^{n+1}_{h})}^2_{H_h(\Omega)})-C',
\end{split}
\end{equation}
for some constants $C,C'>0$.
Finally using the fact that $g$ is a Lipschitz function, then there exists a constant $C>0$ such that
%$$
%\norm{\bu^{n+1}_{h}}_{E_h}\le C(\norm{\beta(s^{n+1}_{h})}_{H_h(\Om)}) +\norm{p^{n+1}_{h})}_{H_h(\Om)}).
%$$
\begin{align*}
\norm{(\{p^{n+1}_K\}_{K\in \mathcal{T}}, \{g(p^{n+1}_K)+\beta(s^{n+1}_K)\}_{K\in \mathcal{T}})}_{\R^{2\mathcal{M}}}
\le C(\norm{\beta(s^{n+1}_{h})}_{H_h(\Omega)}) +\norm{p^{n+1}_{h}}_{H_h(\Omega)}.
\end{align*}
Using this to deduce from \eqref{ch4:eq:disc-exist:5} that (\ref{ch4:goal-exis}) holds for $r$ large enough.
Hence, we obtain the existence of at least one solution to the scheme
\eqref{ch4:prob-discr1}-\eqref{ch4:prob-discr2}.
\end{proof}

\section{Space and time translation estimates}\label{ch4:sec-compactness}

In this section we derive estimates on differences of space and time translates of the
function $\phi_h \rho(p_h)s_hB(s_h)$ which imply that the sequence
$\phi_h \rho(p_h)s_hB(s_h)$ is relatively compact in $L^1(Q_T)$.

We replace the study of
discrete functions $U^h=\phi_h \rho(p_h)s_h B(s_h)$ (constant per cylinder $\QKn:=(t^n,t^{n+1})]\times K$) by the study
of functions $\bar U^h=\phi_h \rho(\bar p_h)\bar s_h B(\bar s_h)$ piecewise continuous in $t$ for all $x$,
constant in $x$ for all volume $\K$, defined as
$$\bar U^h(t,x)=\sum_{n=0}^{N_h}\sum_{\ptK\in\TT_h} \frac
1{\delt}\Bigl(\;(t-n\delt)U^{n+1}_\ptK
\,+\,((n+1)\delt-t)U^{n}_\ptK \; \Bigr)\;\charu_\QKn(t,x).$$
For a given discrete field $\vec{\mathcal{F}_h}:=\sum_{\sigma_{K,L}} \vec{\mathcal{F}}_{K,L} \charu_{T_{K,L}}$, its discrete divergence is defined as a discrete function with entries on each control volume $K;$\\
$$\div_K \vec{\mathcal{F}_h}:=\frac{1}{\abs{K}}\sum_{L\in N(K)} \sigma_{K,L}\vec{\mathcal{F}}_{K,L}\cdot \eta_{K,L} $$
Observe that we can write the discrete scheme \eqref{ch4:prob-discr1}-\eqref{ch4:prob-discr2} in the following from:
\begin{equation*}
\begin{split}
&\phi_h\frac{\rho(p^{n+1}_h)s^{n+1}_h-\rho(p^{n}_h)s^{n}_h}{\Delta t}=\div_\ptTau \Frond^{n+1}_{1,h}+f^{n+1}_{1,h}\\
&\phi_h\frac{s^{n+1}_h-s^{n}_h}{\Delta t}=\div_\ptTau \Frond^{n+1}_{2,h}+f^{n+1}_{2,h}.
\end{split}
\end{equation*}
where,
\begin{align*}
\Frond^{n+1}_{1,h}:=\sum_{\sigma_{K,L}}\Big( \frac{\rho^{n+1}_{K,L}}{d_{K,L}}(\beta(s^{n+1}_{L})-\beta(s^{n+1}_{K}))-
\frac{\rho^{n+1}_{K,L}}{\sigma_{K,L}} G_1(s^{n+1}_K,s^{n+1}_{L};dp^{n+1}_{K,L})\\
- \frac{1}{\sigma_{K,L}}(\rho^2(p^{n+1}_K) M_1(s^{n+1}_K){\bf g}_{K,L}
- \rho^2(p^{n+1}_L) M_1(s^{n+1}_L){\bf g}_{L,K})\Big)\cdot \eta_{K,L} \charu_{T_{K,L}}
\end{align*}
\begin{align*}
\Frond^{n+1}_{2,h}:=\sum_{\sigma_{K,L}}\Big( \frac{1}{d_{K,L}}(\beta(s^{n+1}_{L})-\beta(s^{n+1}_{K}))-
\frac{1}{\sigma_{K,L}} G_2(s^{n+1}_K,s^{n+1}_{L};dp^{n+1}_{K,L})\\
- \frac{1}{\sigma_{K,L}}(\rho_2 M_2(s^{n+1}_L){\bf g}_{K,L}
- \rho_2 M_2(s^{n+1}_K){\bf g}_{L,K})\Big)\cdot \eta_{K,L} \charu_{T_{K,L}}
\end{align*}
\begin{equation*}
\begin{split}
&f^{n+1}_{1,h}:=\rho(p^{n+1}_K)s^{n+1}_K f_{P,K}^{n+1}     \\
&f^{n+1}_{2,h}:=-(s^{n+1}_K-1) f_{P,K}^{n+1}-\abs{K} f_{I,K}^{n+1}.
\end{split}
\end{equation*}

%\begin{multline}\label{ch4:prob-discr1}
%\abs{K}\phi_K\frac{\rho(p^{n+1}_K)s^{n+1}_K-\rho(p^{n}_K)s^{n}_K}{\Delta t}
%-\sum_{L \in N(K) }
%\frac{\abs{\sigma_{K,L}}}{d_{K,L}}\rho^{n+1}_{K,L}(\beta(s^{n+1}_{L})-\beta(s^{n+1}_{K}))
%\\
% + \sum_{L \in N(K) } \rho^{n+1}_{K,L}G_1(s^{n+1}_K,s^{n+1}_{L};dp^{n+1}_{K,L}) +F^{(n+1)}_{1,K,L}+ \abs{K}\rho(p^{n+1}_K)s^{n+1}_K f_{P,K}=0,
%\end{multline}
%\begin{multline}\label{ch4:prob-discr2}
%\abs{K}\phi_K\frac{s^{n+1}_K-s^{n}_K}{\Delta t}- \sum_{L \in N(K) }
%\frac{\abs{\sigma_{K,L}}}{d_{K,L}}(\beta(s^{n+1}_{L})-\beta(s^{n+1}_{K}))\\
%+ \sum_{L \in N(K) } G_2(s^{n+1}_K,s^{n+1}_{L};dp^{n+1}_{K,L}) + F^{(n+1)}_{2,K,L}+ \abs{K}(s^{n+1}_K-1) f_{P,K}=
%-\abs{K} f_{I,K},
%\end{multline}

We also extend $\bar U^h$ by the constant in time value $U^{\ptTau,N_h\!+\!1}$
on $[\delt (N_h\!+\!1),+\infty)$; as to $\Frond_{1,h}$,  $\Frond_{2,h}$, $f_{1,h}$ and $f_{2,h}$, they are
  extended by zero values for $t>\delt (N_h\!+\!1)$.  The above
  definitions permit us to rewrite the equations \eqref{ch4:prob-discr1}-\eqref{ch4:prob-discr2} under
  the form
  \begin{equation}\label{ch4:eq:Kr-Scheme-bis}
  \begin{split}
&\partial_t \phi_h \rho(\bar p_h)\bar s_h =\div_\ptTau \Frond_{1,h}+f_{1,h}\\
&\partial_t  \phi_h \bar s_h=\div_\ptTau \Frond_{2,h}+f_{2,h},
\end{split}
\end{equation}
where $\Frond^{}_{1,h}$, $\Frond^{}_{2,h}$, $f^{}_{1,h} $ \text{ and } $f^{}_{2,h}$  are respectively the discrete functions of values $\Frond^{n+1}_{1,h}$, $\Frond^{n+1}_{2,h}$, $f^{n+1}_{1,h}$ \text{ and } $f^{n+1}_{2,h}$ on each interval $]t^n, t^{n+1}]$.

These equations are satisfied in $W^{1,1}(\R^+)$ in time, for a.e. $x\in \Omega$.

\begin{lemma}\label{ch4:sp-ti-trslt}
There exists positive a constant $C>0$ depending on
$\Omega$, $T$, $u_{0}$ and $v_{0}$ such that
\begin{equation}\label{ch4:sp-trslt}
\iint_{\Omega ' \times (0,T)}\abs{\bar U(t,x+y)-\bar U(t,x)}^2 \,dx \, dt
\le C\abs{y}(\abs{y}+2h),
\end{equation}
for all $y \in \R^3$ with $\Omega '=\{x \in \Omega, \, [x,x+y]\subset \Omega\}$, and
\begin{equation}\label{ch4:ti-trslt}
\iint_{\Omega \times
(0,T-\tau)}\abs{\bar U(t+\tau,x)-\bar U(t,x)}^2\,dx \,dt \le C(\tau+\Delta t),
\end{equation}
for all $\tau\in (0,T)$.
\end{lemma}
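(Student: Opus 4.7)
}

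The proof combines the two classical ingredients for compactness of finite volume approximations: a space translate estimate based on the discrete $H^1$ bounds from Proposition \ref{ch4:prop:LPBV}, and a time translate estimate obtained by testing the scheme against the time difference itself. The central difficulty is that $\bar U^h$ is a nonlinear product of $\phi_h$, $\rho(\bar p_h)$ and $\bar s_h B(\bar s_h)$, so neither quantity appears directly in the discrete equations, and the estimates must be transferred through the factors.

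\medskip

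\textbf{Space translates.} I would first bound the discrete jumps of $U^h$ across an interface $\sigma_{K,L}$ by the jumps of the fundamental quantities for which the scheme gives bounds. Writing
\[
U^{n+1}_L - U^{n+1}_K = \rho(p^{n+1}_L)\bigl[s^{n+1}_L B(s^{n+1}_L) - s^{n+1}_K B(s^{n+1}_K)\bigr] + s^{n+1}_K B(s^{n+1}_K)\bigl[\rho(p^{n+1}_L)-\rho(p^{n+1}_K)\bigr],
\]
using that $\rho$ is $C^1$ and bounded, that the composite function $\sigma\mapsto \beta^{-1}(\sigma)\,B(\beta^{-1}(\sigma))$ is Lipschitz on $[0,\beta(1)]$ (which follows from the H\"older regularity of $\beta^{-1}$ in (H\ref{ch4:hyp:H4}) together with the boundedness of $sB(s)$ on $[0,1]$), and the uniform bound $0\le s\le 1$, I expect an estimate of the form
\[
|U^{n+1}_L-U^{n+1}_K|^2 \le C_1|p^{n+1}_L-p^{n+1}_K|^2 + C_2|\beta(s^{n+1}_L)-\beta(s^{n+1}_K)|^2.
\]
Then I would invoke the classical translate lemma for piecewise constant functions on an admissible mesh,
\[
\int_{\Omega'} |u_h(x+y) - u_h(x)|^2\,dx \le |y|(|y|+2h)\sum_{K\in\mathcal T}\sum_{L\in N(K)}\frac{|\sigma_{K,L}|}{d_{K,L}}|u_L-u_K|^2,
\]
apply it to $\bar U^h(t,\cdot)$ at each time, integrate in $t$, and absorb the resulting time-integrated discrete seminorm via the uniform bounds \eqref{ch4:est:grad-norm} and \eqref{ch4:est:grad-norm1} of Proposition \ref{ch4:prop:LPBV}.

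\medskip

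\textbf{Time translates.} Here I would rely on the rewriting \eqref{ch4:eq:Kr-Scheme-bis} of the scheme. The identity
\[
U^{n+1}_K - U^n_K = B(s^{n+1}_K)\bigl[\rho(p^{n+1}_K)s^{n+1}_K - \rho(p^n_K)s^n_K\bigr] + \rho(p^n_K)s^n_K\bigl[B(s^{n+1}_K)-B(s^n_K)\bigr]
\]
together with the bound $|B(s^{n+1}_K)-B(s^n_K)|\le\beta(1)|s^{n+1}_K - s^n_K|$ and the two discrete equations shows that $U^{n+1}_K-U^n_K$ can be written as $\Delta t$ times a linear combination of the discrete divergences $\mathrm{div}_K\Frond^{n+1}_{1,h}$, $\mathrm{div}_K\Frond^{n+1}_{2,h}$ and the source terms, with $L^\infty$ coefficients. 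Following the standard device, one writes
\[
\iint_{\Omega\times(0,T-\tau)}|\bar U(t+\tau,x)-\bar U(t,x)|^2\,dx\,dt = \iint A(t,x)\int_t^{t+\tau}\partial_s \bar U^h(s,x)\,ds\,dx\,dt,
\]
with $A(t,x)=\bar U(t+\tau,x)-\bar U(t,x)$. Using the discrete duality Lemma \ref{ch4:lam:stokes} to transfer the discrete divergence onto the test function turns the right-hand side into a pairing of the discrete fluxes $\Frond_{i,h}$ with the discrete gradients of the $B$- and $(\rho s)$-weighted increments of $\bar U$ (plus a straightforward source-term contribution). Cauchy–Schwarz together with the uniform $L^2$ bounds on the discrete gradients of $p$, $\beta(s)$ and the $L^\infty$ bounds on $s,\rho(p),M_i$ from Proposition \ref{ch4:prop:LPBV} yields an estimate by $C(\tau+\Delta t)$, the $\Delta t$ coming from the boundary effect of the time integration when $t$ and $t+\tau$ lie in a common discrete interval.

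\medskip

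\textbf{Principal obstacle.} The main difficulty is the time translate bound, because $\bar U^h$ is not itself a conserved quantity: its discrete time derivative has to be expressed through \emph{both} evolution equations, and the coupling coefficients $B(s^{n+1})$ and $\rho(p^n)s^n$ must be handled so that after the discrete integration by parts one only sees quantities controlled by Proposition \ref{ch4:prop:LPBV}. Controlling the cross terms from $\rho(p^{n+1})s^{n+1}-\rho(p^n)s^n$ (which carries a potentially unbounded discrete gradient when $s$ vanishes) without spoiling the linear dependence on $\tau$ is the delicate point; it is precisely for this reason that the compact variable is chosen as $\rho(p)sB(s)$, the extra factor $B(s)$ providing the vanishing weight that tames the gas pressure flux in the degenerate region.
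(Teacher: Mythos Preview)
Your space-translate argument coincides with the paper's: both decompose $U^{n+1}_L-U^{n+1}_K$ into a $\rho(p)$-jump and an $sB(s)$-jump, reduce to the discrete seminorms of $p$ and $\beta(s)$, and invoke the standard segment-counting lemma together with Proposition~\ref{ch4:prop:LPBV}.

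For the time translates the paper takes a \emph{genuinely different} route. Instead of testing the scheme against the time increment $A=\bar U(t+\tau,\cdot)-\bar U(t,\cdot)$ as you propose, the paper follows the Kruzhkov mollification device (in the spirit of Andreianov--Bendahmane--Ruiz-Baier): one tests \eqref{ch4:eq:Kr-Scheme-bis} against the discretisation of $\rho_\delta*\bigl(\mathrm{sign}\,W^h(t)\,\chi_{\Omega'}\bigr)$, which has $L^\infty$ norm at most $1$ and discrete gradient bounded by $C\delta^{-(l+1)}$. This yields $I^h_\delta(\Delta)\le C\Delta(1+\delta^{-l-1})$; the gap $|I^h(\Delta)-I^h_\delta(\Delta)|$ is then controlled through the key pointwise inequality $\bigl||a|-a\,\mathrm{sign}\,b\bigr|\le 2|a-b|$ together with the \emph{space} translate estimate already established. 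Optimising in $\delta$ produces a modulus of continuity $\tilde\omega(\tau)\to0$, which---note---is \emph{not} the linear bound $C(\tau+\Delta t)$ announced in the statement, but suffices for the Fr\'echet--Kolmogorov compactness used downstream.

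Your direct approach is the more classical Alt--Luckhaus\,/\,Eymard--Gallou\"et--Herbin--Michel strategy and would in principle yield the sharper linear estimate. Its cost is that after the discrete duality of Lemma~\ref{ch4:lam:stokes} you must control $\nabla_{K,L}\bigl[B(s_h(s))\,A(t)\bigr]$ and $\nabla_{K,L}\bigl[\rho(p_h)s_h\beta(s_h)(s)\,A(t)\bigr]$; the cross terms contain $|B(s^{n+1}_L)-B(s^{n+1}_K)|$, which under (H\ref{ch4:hyp:H4}) is a~priori only bounded by $C|\beta(s^{n+1}_L)-\beta(s^{n+1}_K)|^\theta$ with possibly $\theta<1$, so a direct appeal to \eqref{ch4:est:grad-norm1} does not close immediately. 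The Kruzhkov argument sidesteps this by trading the gradient of the test function for a uniform $\delta$-dependent $L^\infty$ bound, at the price of the weaker (nonexplicit) modulus. If you keep the direct route you should say how the H\"older-only control on $B(s_L)-B(s_K)$ is absorbed---for instance by interpolating with the $L^\infty$ bound on $\beta(s)$, or by noting that the pairing with the $L^2$-bounded flux tolerates a sub-quadratic power after Young's inequality.
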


\begin{proof}

The proof is similar to that found in, e.g, \cite{Eymard_etal_II:2000}.

{\it Proof of \eqref{ch4:sp-trslt}}.
First to simplify the notation, we write
$$
\sum_{\sigma_{K,L}}\quad \text{instead of} \quad
\sum_{\{(K,L)\in {\mathcal{T}}^2,\,K\ne L,\,m(\sigma_{K,L})\ne 0\}}.
$$
Let $y \in \R^3$, $x \in \Omega '$, and $L \in N(K)$. We set
$$
\beta_{\sigma_{K,L}}=
\begin{cases}
    1, & \text{if the line segment $[x,x+y]$ intersects $\sigma_{K,L}$, $K$ and $L$},\\
    0, & \text{otherwise}.
\end{cases}
$$
Next, the value $c_{\sigma_{K,L}}$ is defined by
$\displaystyle c_{\sigma_{K,L}}=\frac{y}{\abs{y}}\cdot \eta_{K,L}$
with $c_{\sigma_{K,L}}>0$.
We observe that (see for more details \cite{EyGaHe:book})
\begin{equation}\label{ch4:est:space}\begin{split}
&\int_{\Omega '}\beta_{\sigma_{K,L}}(x) \,dx \le m(\sigma_{K,L}) \abs{y} c_{\sigma_{K,L}},\\
&\sum_{\sigma_{K,L}} \beta_{\sigma_{K,L}}(x)
c_{\sigma_{K,L}}d_{K,L}\le \abs{y}+2h.
\end{split}\end{equation}
With this and an application of the Cauchy-Schwarz inequality leads to
\begin{equation}\label{ch4:est:space:4}
      \begin{split}
&\iint_{(0,T)\times\Omega '}\abs{U^h(t,x+y)-U^h(t,x)}^2\,dx\\
&\qquad \le \sum_{\sigma_{K,L}} \beta_{\sigma_{K,L}}(x)c_{\sigma_{K,L}}
d_{K,L} \sn \Delta t \sum_{\sigma_{K,L}} \frac{\abs{U_L^{n+1}-U_K^{n+1}}^2}
{c_{\sigma_{K,L}}d_{K,L}} \int_{\Omega '} \beta_{\sigma_{K,L}}(x)\,dx\\
&\qquad \le  (\abs{y}+2h)\sn \Delta t
\sum_{\sigma_{K,L}}\frac{\abs{U_L^{n+1}-U_K^{n+1}}^2}
{c_{\sigma_{K,L}}d_{K,L}}\int_{\Omega '} \beta_{\sigma_{K,L}}(x)\,dx\\
&\qquad \le \abs{y} (\abs{y}+2h)\sn \Delta t
\sum_{\sigma_{K,L}}\frac{\abs{\phi \rho(p_L^{n+1})s_L^{n+1} B(s_L^{n+1})-\phi \rho(p_K^{n+1})s_K^{n+1} B(s_K^{n+1})}^2}
{c_{\sigma_{K,L}}d_{K,L}}\\
&\qquad \le C \abs{y} (\abs{y}+2h)\sn \Delta t
\sum_{\sigma_{K,L}}\frac{\Biggl(\abs{p_L^{n+1}-p_L^{n+1}}^2+\abs{\beta(s_L^{n+1})-\beta(s_K^{n+1})}^2\Biggl)}
{c_{\sigma_{K,L}}d_{K,L}},
\end{split}
\end{equation}
for some constant $C>0$.
In addition, we have
\begin{align*} \int_0^{+\infty}\!\!\!\!\int_{\Omega '}\!\! |\bar U^h(t,x\!+\!\dx)
\!-\! \bar U^h(t,x) |\,dxdt \leq &2 \int_0^{T}\!\!\!\!\int_{\Omega '}
\!\!|U^h(t,x\!+\!\dx) \!-\! U^h(t,x) |\,dxdt \\
+&2\delt_h\int_{\Omega '_\Delt}\!\!\!\! |U^h_0(x)|\,dx,
\end{align*}
where $U_0=\rho(p_0)s_0 B(s_0)$ and $\Omega '_\Delt=\{x \in \Omega, \, \dist(x,\Omega ')< \abs{\Delta}\}$. By (\ref{ch4:sp-trslt}), the assumption $\delt_h\to 0$ as $h\to 0$
and the boundedness of $(u^h_0)_h$ in $L^1(\Omega '_\Delt)$, then the space
translates of $\bar U^h$ on $\Omega '$ are estimated uniformly for all
sequence $(h_i)_i$ convergent to zero. In the sequel, we drop the subscript $i$ in the notation.

{\it Proof of \eqref{ch4:ti-trslt}}.  Now we show a
uniform estimate of the time translates of $(\bar U^h)_h$ :
\begin{equation}\label{ch4:eq:timetransl}
 \text{for all $\Delt\in (0,\tau]$}, \quad
{\displaystyle\int_0^{+\infty}\!\!\!\int_{\Omega}} |U^h(t\!+\!\Delt, x)\!
-\! U^h(t,x) |\,dx dt \leq \widetilde{\omega}(\tau)
\end{equation}
uniformly in $h$. Here $\widetilde{\omega}:\RR^+\longrightarrow
 \RR^+$ is a modulus of continuity, i.e., $\lim\limits_{\tau\to 0}
 \widetilde{\omega}(\tau)=0$.

\medskip
Let us construct $\widetilde{\omega}(\cdot)$ verifying
(\ref{ch4:eq:timetransl}). First fix $h$ and fix $\Delt \in(0,\tau]$.
  Denote by $I^h(\Delt)$ the left-hand side of
  \eqref{ch4:eq:timetransl}. For $t\geq 0$, set $W^h(t,\cdot)=\bar
  U^h(t+\Delt,\cdot)-\bar U^h(t,\cdot)$. Notice that
  $W^h(t,\cdot)\equiv 0$ for large $t$.

Take a standard family $(\rho_\delta)_\delta$ of mollifiers on $\R^l$
defined as $\rho_\delta(x):={\delta^{-l}}\rho(x/\delta)$, where $\rho$
is a Lipschitz continuous, nonnegative function supported in the unit
ball of $\R^l$, and $\int_{\R^l} \rho(x)\,dx=1$. In particular, we
have
$$|\grad \rho_\delta|\leq \frac C{\delta^{l+1}}.$$ Here and throughout
the proof, $C$ will denote a generic constant independent of $h$ and
$\delta$. For all $t>0$, define the function
$\ph(t,\cdot):\R^l\longrightarrow \R$ by $\ph(t):=\rho_\delta*(\sign
w^h(t)\charu_{\Omega '})$. In order to lighten the notation, we do not
stress the dependence of $\ph$ on $h$ and $\delta$. Discretize
$\ph(t,\cdot)$ on the mesh $\Tau_h$ by setting $\ph_\ptK(t)=\frac 1\mK
\int_\ptK \ph(t,x)\,dx$; we denote $\ph^h(t)$ the corresponding
discrete function. Denote $\size(\Tau_h):=\max_{\ptK\in\TT_h}
\diam(\K)$.  By the definition of $\ph(t,\cdot)$, the discrete
function $\ph^h(t)$ is null on the set $\Bigl\{ x\in\Omega \,\Bigl|\,
\dist(x,\overline{\Omega '})\geq \delta+\size(\Tau_h) \Bigr\}$, for all
$t$. Thus for all sufficiently small $h$ and $\delta$, the support of
$\ph^h(t)$ is included in some domain $\Omega ''$, $\Omega ''\subset\Omega$.

Note that
$$
\partial_t U^h=\partial_t (\rho(p_h)s_h B(s_h))=\partial_t (\rho(p_h)s_h) B(s_h)+\rho(p_h)s_h\partial_t B(s_h).
$$
Now for for all $x\in\K$, we multiply equation
\eqref{ch4:eq:Kr-Scheme-bis} by $\mK\ph(t)_\ptK$, integrate in $t$ on
$[s,s+\Delt]$, and make the summation over all $\K$. Finally, we
integrate the obtained equality in $s$ over $\R^+$ to get
\begin{equation}\label{ch4:eq:Estimate-with-wt}\begin{split}
&\int_0^{+\infty}\!\sum_\K \mK
\ph_\ptK(s)W_\ptK(s)\,ds=\\
& \int_0^{+\infty}\!\!\int_{s}^{s+\Delt}\sum_\K
\mK\,\ph_\ptK(t)B(s_K(t))\,\biggl(\,\div_\ptK [\Frond^{h}_1(t)] + (f_1^h(t))_\ptK
\biggr)\,dtds\\
+&\int_0^{+\infty}\!\!\int_{s}^{s+\Delt}\sum_\K
\mK\,\ph_\ptK(t)\rho(p_k(t))s_k(t)\beta(s_K(t))\,\biggl(\,\div_\ptK [\Frond^{h}_2(t)] + (f_2^h(t))_\ptK
\biggr)\,dtds.
\end{split}\end{equation}
Denote by $I^h_\delta(\Delt)$ the left-hand side of
\eqref{ch4:eq:Estimate-with-wt}. Denote
$Q''=(0,(N_h+1)\delt)\times\Omega ''$. Using hypothesis the definitions of discrete norms
and the Fubini theorem, we get
\begin{multline*}I^h_\delta(\Delt)\leq C\Delt \;\biggl(\;  \Bigl\|\,
\Frond^{h}\,\Bigr\|_{L^2(Q'')}\,(\max_{t>0}\max_\KIL
|\grad_\ptKIL \ph^h(t)| + \\
|\grad_\ptKIL B(s_h(t))|^2)\;+ \;
\|\ph^h\|_{L^\infty(Q'')}\|f^h\|_{L^1(Q'')}\;\biggr).
\end{multline*}
Now the  the $L^2_{loc}([0,T]\times\Omega)$ bounds on
$(\Frond^h)_h$,$(f^h)_h$, the bounds $|\ph(t,\cdot)|\leq 1$ and
$|\grad\ph(t,\cdot)|\leq C/{\delta^{l+1}}$,  yield the estimate
\begin{equation}\label{ch4:eq:Estimate-with-wt-2}
I^h_\delta(\Delt)\leq C\Delt (1+{\delta^{-l-1}})
\end{equation}
for all $h$ and $\delta$ small enough, uniformly in $h$. Now,
notice that by the definition of $\ph_\ptK(t)$,
\begin{equation}\begin{split}
\mK\,\Bigl(\,|W_\ptK(t)|\!-\!W_\ptK(t)\ph_\ptK(t)\,\Bigr)&=\dsp \mK\,
|W^h(t,x)|-W_\ptK(t)\!\! \int_\ptK \!\!\ph(t,x)\,dx\\
&=\! \int_\ptK\!
\Bigl(\,|W^h(t,x)|\!-\!W^h(t,x)\ph(t,x)\,\Bigr)\,dx;
\end{split}\end{equation}
 therefore
\begin{equation}\label{ch4:eq:I-I-equality}
I^h(\Delt)-I^h_\delta(\Delt)={\displaystyle\int_0^{+\infty}\!\!\!\int_{\Omega}}
\Bigl(|W^h(t,x)|-W^h(t,x)\ph(t,x)\,\Bigr)\,dxdt.
\end{equation}
Starting from this point, the argument of Kruzhkov
Ref.~\cite{Kruzhkov-zametki} applies exactly as for the ``continuous'' case.  Set $U'_\delta:=\Bigl\{x\in
\R^l\,\Bigr|\,\dist(x,\ptl{\Omega '})<\delta\Bigr\}$; notice that
$U'_\delta\subset\Omega ''\subset\Omega$ for all $\delta$ small enough.
Notice that without loss of restriction, the boundary of $\Omega '$ can be
chosen regular enough so that to ensure that $\meas(U'_\delta)$ goes
to zero as $\delta\to 0$. By the result of Step 1 of the lemma and the
Frechet-Kolmogorov theorem, the family
$\Bigl(\displaystyle\int_0^{+\infty} \!\!\!|W^h(t,\cdot)|dt\Bigr)_{h}$
is relatively compact in $L^1_{loc}({\Omega})$. Therefore these functions
are equi-integrable on ${\Omega ''}$, so that
${\displaystyle\int_0^{+\infty}\!\!\!\int_{U'_\delta}} |W^h(t,x)|dxdt
\leq \hat{\omega}(\delta)$ uniformly in $h$, with $\lim_{\delta \to 0}
\hat{\omega}(\delta)=0$. Now by the definition of $\ph$, from formula
\eqref{ch4:eq:I-I-equality} we deduce that
\begin{align*} |I^h(\Delt)\!-\!I^h_\delta(\Delt)|\leq 2
&\int_0^{+\infty}\!\!\!\!\int_{U'_\delta}\!\!|W^h(t,x)| dxdt\\
+&\int_0^{+\infty}\!\!\!\!\int_{\Omega '\setminus U'_\delta}
\biggl||W^h(t,x)| -W^h(t,x) (\rho_\delta*{\rm sign}\,
W^h(t))(x)\biggr|\, dxdt,
\end{align*}
 the first term in the right-hand side accounts for the action of
the truncation $\charu_{\Omega '}$ in the definition of $\ph$. Using the
standard properties of $\rho_\delta$, we infer
\begin{multline*}
|I^h(\Delt)\!-\!I^h_\delta(\Delt)|\leq 2\hat{\omega}(\delta)\\
+\int_0^{+\infty}\!\!\!\int_{\Omega '\setminus
  U'_\delta}\int_{\R^l}\!  \rho_\delta(x\!-\!y)
\biggl||W^h(t,x)|-W^h(t,x) \,{\rm sign}\, W^h(t,y)\biggr|\, dydxdt.
\end{multline*}
 Now note the key inequality:
$$ \text{ $\forall a,b \in \RR$ \quad $\Bigl||a|-a\, {\rm sign}\,
  b\Bigr|\leq 2\,|a-b|$}.
$$ Setting $\sigma:=(x\!-\!y)/\delta$, we
infer \begin{equation}\label{ch4:eq:I-I-inequality-bis}
\begin{array}{l}
|I^h(\Delt)\!-\!I^h_\delta(\Delt)|\,\leq \, 2\hat{\omega}(\delta)
\,+\, 2\displaystyle\int_0^{+\infty}\!\!\!\int_{\Omega '}\int_{\R^l}\!
\rho_\delta(x\!-\!y)|W^h(t,x)\!-\!W^h(t,y)|\, dydxdt \,\leq\, \\[10pt]
\quad \leq\, 2\hat{\omega}(\delta) \,+\,
2\displaystyle\int_{\R^l}\!\rho(\sigma)\int_0^{+\infty}\!\!\!\int_{\Omega '}
|\bar u^h(t,x)\!-\!\bar u^h(t,x\!-\!\delta\sigma)|\, dxdt\,d\sigma
\,\leq\, 2\hat{\omega}(\delta) + 2\omega(\delta),
\end{array}
\end{equation}
where $\omega(\cdot)$ is the modulus of continuity controlling the
space translates of $\bar u^h$ in $\Omega'$. Indeed, by Steps 1 and 2
of the proof, one can choose $\omega(\cdot)$ independent of
$h$. Combining \eqref{ch4:eq:Estimate-with-wt-2} with
\eqref{ch4:eq:I-I-inequality-bis}, we conclude that the function
$$\widetilde{\omega}(\tau):=\inf\limits_{\delta>0} C
\left\{\;\tau\,(1+{\delta^{-l-1}}) + 2\hat{\omega}(\delta) +
2\omega(\delta) \;\right\}$$ upper bounds the quantity $I^h$. Because
$\widetilde{\omega}(\tau)$ tends to $0$ as $\tau\to 0$, this proves
\eqref{ch4:eq:timetransl}.
\end{proof}

\section{Convergence of the finite volume scheme}\label{ch4:sec:conv}
\begin{proposition} \label{ch4:propconvh}
There exists a subsequences, still denoted $(U_{h},s_h)_h$, such that, as $h\to 0$
\begin{align}
\label{ch4:cvhuubar}
&\Vert U^h-\bar U^h\Vert_{L^1(\Omega ')}\longrightarrow 0,\\
%\label{ch4:cvhssbar}
%&\Vert s_{h} -\bar s_{h}\Vert_{L^1(\Om')}\longrightarrow 0,\\
\label{ch4:cvhp}
&U_{h}\longrightarrow U \text{ strongly in }L^p(Q_T) \text{ and a.e. in } Q_T \text{ for all } p\geq1,\\
\label{ch4:cvhsw}
&s_{h}\longrightarrow s \text{ strongly in } L^p(Q_T)  \text{ for all } p>1,\\
\label{ch4:cvhpw}
&\Grad_h \beta(s_{h})\longrightarrow \Grad \beta(s) \text{ weakly in } (L^2(Q_T))^3,\\
\label{ch4:cvhs}
&\Grad_h p_{h}\longrightarrow \Grad p \text{ weakly in } (L^2(Q_T))^3,\\
\label{ch4:cvhsb}
& p_{h}\longrightarrow  p \text{ weakly in } L^2(Q_T).\\
\end{align}
Furthermore,
\begin{align}
\label{ch4:cvpps}
&s_{h}\longrightarrow s \text{ a.e. in } Q_T, \text{ and } 0\le s \le 1 \text{ a.e. in } Q_T,\\
&U=\phi \rho(p)s B(s) \text { a.e. in } Q_T \label{ch4:r=rhos}
\end{align}
Finally, we have,
\begin{align}
\label{ch4:f1f2}
&f_1(p_{h})f_2(s_{h})\longrightarrow f_1(p)f_2(s) \text { a.e. in } Q_T, \forall f_1,f_2 \in {{\mathcal C}}_b^0(\R) \text { such that } f_2(0)=0.
%\label{ch4:tldw1}
%&\phi\partial_t\tilde \bar U^{h} \longrightarrow \phi\partial_t (\rho(p_1)s_1)  \text {  weakly in }  L^2(0,T;(H^1_{\Gamma_w}(\Omega))')\\
%\label{ch4:tldw2}
%&\phi\partial_t\tilde s_{2,h} \longrightarrow \phi\partial_t s_2  \text {  weakly in }  L^2(0,T;(H^1_{\Gamma_w}(\Omega))').
\end{align}
\end{proposition}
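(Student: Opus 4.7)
The plan is to deduce the claimed convergences in a definite order: the weak $L^2$ bounds on $p_h$ and $\beta(s_h)$ from Proposition~\ref{ch4:prop:LPBV}, strong $L^p$ compactness of $U^h$ via Kolmogorov--Riesz--Fr\'echet applied to Lemma~\ref{ch4:sp-ti-trslt}, strong $L^2$ compactness of $\beta(s_h)$ (yielding a.e.\ convergence of $s_h$), identification of the limit $U=\phi\rho(p)sB(s)$ by a strong-times-weak-$*$ product argument, and finally the nonlinear a.e.\ convergence~\eqref{ch4:f1f2} obtained via the recovery of $p_h\to p$ a.e.\ on $\{s>0\}$.

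First I combine Proposition~\ref{ch4:prop:LPBV} with the discrete Poincar\'e inequality (Lemma~\ref{ch4:lem-disc-sob}) to bound $p_h$ and $\beta(s_h)$ uniformly in the discrete $L^2(0,T;H^1_{\Gamma_w}(\Omega))$ norm. Classical identification results for two-point finite volume discretizations (see e.g.~\cite{EyGaHe:book}) then furnish limits $p,\tilde b\in L^2(0,T;H^1_{\Gamma_w}(\Omega))$ and a subsequence along which $p_h\rightharpoonup p$ and $\beta(s_h)\rightharpoonup \tilde b$ weakly in $L^2(Q_T)$, together with $\Grad_h p_h\rightharpoonup \Grad p$ and $\Grad_h\beta(s_h)\rightharpoonup \Grad\tilde b$ weakly in $L^2(Q_T)^d$; this covers~\eqref{ch4:cvhpw}, \eqref{ch4:cvhs} and~\eqref{ch4:cvhsb}. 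The maximum principle yields $0\le s_h\le 1$, so $s_h$ is uniformly $L^\infty$-bounded and $U^h$ is bounded by $\phi_1\rho_M B(1)$ in $L^\infty(Q_T)$.

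Next I apply Kolmogorov--Riesz--Fr\'echet to $(\bar U^h)_h$ using~\eqref{ch4:sp-trslt}--\eqref{ch4:ti-trslt} to extract a subsequence along which $\bar U^h\to U$ in $L^1_{\mathrm{loc}}(Q_T)$ for some $U\in L^1(Q_T)$. Statement~\eqref{ch4:cvhuubar} follows from~\eqref{ch4:ti-trslt} with $\tau=\delta t$, since on each $(t^n,t^{n+1})\times K$ one has $|U^h-\bar U^h|\le|U_K^{n+1}-U_K^n|$. Interpolating against the uniform $L^\infty$ bound upgrades this to $U^h\to U$ in every $L^p(Q_T)$, $p<\infty$, and a further extraction gives a.e.\ convergence, establishing~\eqref{ch4:cvhp}. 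Separately I prove strong $L^2$ compactness of $\beta(s_h)$: its space translates are controlled by~\eqref{ch4:est:grad-norm1} exactly as in the proof of~\eqref{ch4:sp-trslt}, and its time translates are obtained by an Alt--Luckhaus computation on~\eqref{ch4:prob-discr2}, multiplying by a suitable time difference of $\beta(s_h)$ and using the monotonicity of $\beta$. Kolmogorov--Riesz--Fr\'echet then gives $\beta(s_h)\to\tilde b$ in $L^2(Q_T)$, and the H\"older property in (H\ref{ch4:hyp:H4}) yields $s_h=\beta^{-1}(\beta(s_h))\to s:=\beta^{-1}(\tilde b)$ a.e., hence in every $L^p$ by dominated convergence; by continuity of $\beta$ we get $\tilde b=\beta(s)$ and $0\le s\le 1$, settling~\eqref{ch4:cvhsw} and~\eqref{ch4:cvpps}.

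Finally, to identify $U$ and obtain~\eqref{ch4:f1f2}, set $F(\sigma)=\sigma B(\sigma)$, which is continuous and \emph{strictly} increasing on $[0,1]$ because $F'(\sigma)=B(\sigma)+\sigma\beta(\sigma)>0$ for $\sigma>0$. The a.e.\ convergence $s_h\to s$ gives $\phi_hF(s_h)\to\phi F(s)$ in every $L^p$. Since $\rho(p_h)$ is $L^\infty$-bounded, up to extraction $\rho(p_h)\overset{\ast}{\rightharpoonup}\chi$ weak-$*$ in $L^\infty(Q_T)$; passing to the limit in $U^h=\phi_h\rho(p_h)F(s_h)$ (strong times weak-$*$) yields $U=\phi\chi F(s)$ a.e. On the set $\{s>0\}$, $F(s)>0$, so the identity $\rho(p_h)=U^h/(\phi_hF(s_h))$ passes to the a.e.\ limit and gives $\rho(p_h)\to\chi=U/(\phi F(s))$ a.e.\ on $\{s>0\}$; by strict monotonicity of $\rho$ this forces $p_h\to\rho^{-1}(\chi)$ a.e.\ on that set, and compatibility with the weak limit $p$ gives $\chi=\rho(p)$ there. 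On $\{s=0\}$ both sides of~\eqref{ch4:r=rhos} vanish. For~\eqref{ch4:f1f2}: on $\{s>0\}$ the product $f_1(p_h)f_2(s_h)$ converges a.e.\ by continuity of $f_1,f_2$; on $\{s=0\}$ the factor $f_2(s_h)\to f_2(0)=0$ is balanced by the uniform bound $|f_1(p_h)|\le\|f_1\|_\infty$. The hardest step is exactly this recovery of $p_h\to p$ a.e.\ on $\{s>0\}$: outside this set no compactness on $p_h$ is available (this is the degeneracy of the pressure equation on $\{s=0\}$ flagged in the introduction), and the assumption $f_2(0)=0$ in~\eqref{ch4:f1f2} is what renders the lack of compactness harmless.
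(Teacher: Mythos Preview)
Your argument is correct in outline, but it diverges from the paper's proof in two places, and one of your steps is under-justified.

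\textbf{Compactness of $s_h$.} You obtain strong convergence of $\beta(s_h)$ directly, via space translates from the discrete gradient bound and time translates from an Alt--Luckhaus computation on the water equation~\eqref{ch4:prob-discr2}. The paper instead reruns Lemma~\ref{ch4:sp-ti-trslt} with $U^h=s_hB(s_h)$ in place of $\phi_h\rho(p_h)s_hB(s_h)$, obtaining compactness of $s_hB(s_h)$ and then $s_h$ by strict monotonicity of $\sigma\mapsto\sigma B(\sigma)$. Both routes work; yours is closer to the standard continuous-case argument, while the paper's reuses the machinery already in place.

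\textbf{Identification $U=\phi\rho(p)sB(s)$.} The paper uses a Minty monotonicity trick with the nonnegative weight $\phi_h s_h B(s_h)$: from $(\rho(p_h)-\rho(v))(p_h-v)\ge 0$ one passes to the limit using the strong convergence of $U^h$ and $\phi_h s_hB(s_h)$ together with $p_h\rightharpoonup p$, then chooses $v=p+\delta w$ and lets $\delta\to 0$. Your route is to extract a weak-$*$ limit $\chi$ of $\rho(p_h)$, write $U=\phi\chi F(s)$, and then on $\{s>0\}$ recover $\rho(p_h)\to\chi$ a.e.\ from the quotient $U^h/(\phi_hF(s_h))$.

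\textbf{The gap.} Your sentence ``compatibility with the weak limit $p$ gives $\chi=\rho(p)$ there'' is the weak point. Knowing $\rho(p_h)\to\chi$ a.e.\ on $\{s>0\}$ and $p_h\rightharpoonup p$ in $L^2$ does \emph{not} immediately give $\chi=\rho(p)$, because $\rho$ is nonlinear and $p_h$ is not $L^\infty$-bounded. You must argue either: (i) since $p_h$ is bounded in $L^2$, Fatou forces the a.e.\ limit $q:=\rho^{-1}(\chi)$ to be finite a.e.\ and in $L^2(\{s>0\})$, and then an Egorov argument shows that a.e.\ convergence plus an $L^2$ bound implies $p_h\rightharpoonup q$ weakly on $\{s>0\}$, hence $q=p$ there; or (ii) run Minty directly on $\{s>0\}$ using the strong convergence $\rho(p_h)\to\chi$ in $L^2(\{s>0\})$ and the weak convergence $p_h\rightharpoonup p$. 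Either fix is short, but as written the step is not a proof. Note that once (i) is done you get $p_h\to p$ a.e.\ on $\{s>0\}$ for free, which is exactly what you need for~\eqref{ch4:f1f2}; the paper's proof of~\eqref{ch4:f1f2} relies on the same a.e.\ recovery, invoked implicitly through~\eqref{ch4:cvhp} and~\eqref{ch4:r=rhos}.
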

\begin{proof}
For the first convergence (\ref{ch4:cvhuubar}) it is useful to introduce the following inequality, for all $a,b\in\R$,
 $$\int_0^1 |\theta
 a + (1-\theta)b|\,d\theta\geq \frac{1}{2}(|a|+|b|)$$
 Applying this inequality to $a=u_\ptTau^{(n+1)}-u_\ptTau^{n}$,
 $b=u_\ptTau^{n}-u_\ptTau^{(n-1)}$, from the definition of $\bar U^h$
 we deduce
$$ \int_0^T\int_{\Omega '}|U^h(t,x)-\bar U^h(t,x)|\,dxdt \leq
 2\,\int_0^{T\!+\delt_h}\int_{\Omega '} |\bar U^h(t\!+\!\delt_h,x)-\bar
 U^h(t,x)|\,dxdt.
$$ Since $\delt_h$ tends to zero as $h\to 0$, estimate
 \eqref{ch4:eq:timetransl} in Lemma \ref{ch4:sp-ti-trslt} implies that the right-hand side of
 the above inequality converges to zero as $h$ tends to zero, and this established (\ref{ch4:cvhuubar}).

By the Riesz-Frechet-Kolmogorov compactness criterion, the relative compactness of $(\bar U^h)_h$ in
$L^1(Q_T)$ is a consequence of Lemma \ref{ch4:sp-ti-trslt}. Now, the convergence  (\ref{ch4:cvhp}) in $L^1(Q_T)$ and a.e in  $Q_T$ becomes a consequence of (\ref{ch4:cvhuubar}). Due to the fact that $U^h$ is bounded, we establish the convergence in $L^1(Q_T)$.

In order to prove the third convergence (\ref{ch4:cvhsw}), we reproduce the previous lemma \ref{ch4:sp-ti-trslt} for $U^h=s_h B(s_h)$,  and as an application of
the Riesz-Frechet-Kolmogorov compactness criterion  we establish (\ref{ch4:cvhsw}).

For the weak convergence of the discrete gradient  of the global pressure, let us recall the piecewise approximation $\nabla_h p_h$
of $\nabla p_h$ in $Q_t$:
$$
\Grad_h p_{h}(t,x)=\begin{cases}
l \frac{P^n_{L}-P^n_{K}}{d_{K,L}}\eta_{K,L}&\text{ if $(t,x) \in (t^n,t^{n+1})\times T_{K,L}$},\\
0&\text{ if $(t,x) \in (t^n,t^{n+1})\times T^{\text{ext}}_{K,\sigma}$},
\end{cases}
$$
for all $K \in \mathcal{T}$ and $0\leq n\leq N_h$.
It follows from proposition \ref{ch4:prop:LPBV} that, the sequence $(\Grad_h p_{h})_h$ is bounded in $(L^2(Q_T))^3$, and as a consequence of the discrete Poincaré  inequality, the sequence $(p_h)_h$ is bounded in $L^2(Q_T)$. Therefore there exist two functions $p\in L^2(Q_T)$ and $\zeta \in (L^2(Q_T))^3 $ such that (\ref{ch4:cvhsb}) holds and
$$\Grad_h p_{h}\longrightarrow \zeta \text{ weakly in } (L^2(Q_T))^3.$$
It remains to identify $\nabla p$ by $\zeta$ in the sense of distributions. For that, it is enough to show as $h\to 0$:
$$ E_h:=\int \int_{Q_T}\nabla_h p_h \cdot\varphi \,dxdt+\int \int_{Q_T} p_h \cn\varphi \,dxdt \longrightarrow 0,~~~~\forall \varphi\in \mathcal{D}(Q_T)^3.$$
Let $h$ be small enough such that $\varphi$ vanishes in $T^{\text{ext}}_{K,\sigma}$ for all $K\in\mathcal{T}$. In view of $\eta_{K,L}=-\eta{L,K}$ we obtain for all $t\in (t^n,t^{n+1})$
\begin{align*}
 \int_{\O} p_h \cn \varphi(t,x) \,dx&=\sum_{K\in \mathcal{T}} \int_K p_h \cn \varphi(t,x)\,dx\\
&=\sum_{K\in \mathcal{T}}\sum_{L\in N(K)} p^n_K\int_{\sigma_{K,L}}\varphi (t,s)\cdot \eta_{K,L}\,ds\\
&=\frac{1}{2}\sum_{K\in \mathcal{T}}\sum_{L\in N(K)}(p^n_K-p^n_L)\int_{\sigma_{K,L}}\varphi (t,s)\cdot \eta_{K,L}\,ds.
\end{align*}
Now, from the definition of the discrete gradient,
\begin{align*}
 \int_{\O} \nabla_h p_h  \varphi(t,x) \,dx
&=\frac{1}{2}\sum_{K\in \mathcal{T}}\sum_{L\in N(K)}\int_{T_{K,L}}\nabla_h p_h  \varphi(t,x) \,dx\\
&=\frac{1}{2}\sum_{K\in \mathcal{T}}\sum_{L\in N(K)}\frac{l}{d_{K,L}}(p^n_L-p^n_K) \int_{T_{K,L}} \varphi(t,x)\cdot \eta_{K,L} \,dx
\end{align*}
Then,
\begin{equation*}
\begin{split}
E_h= \frac{1}{2}\sum_{K\in \mathcal{T}}\sum_{L\in N(K)}\sigma_{K,L}(p^n_L-p^n_K) \Big(\frac{1}{\sigma_{K,L}} \int_{\sigma_{K,L}}\varphi (t,s)\cdot \eta_{K,L}\,ds \\- \frac{1}{T_{K,L}}\int_{T_{K,L}} \varphi(t,x)\cdot \eta_{K,L} \,dx \Big)
\end{split}
\end{equation*}
Due to the smoothness of $\varphi$, one gets
$$ \Big| \frac{1}{\sigma_{K,L}} \int_{\sigma_{K,L}}\varphi (t,s)\cdot \eta_{K,L}\,ds - \frac{1}{\abs{T_{K,L}}}\int_{T_{K,L}} \varphi(t,x)\cdot \eta_{K,L} \,dx\Big| \leq C\; h,
$$
and the Cauchy-Scharwz inequality with proposition \ref{ch4:prop:LPBV}
\begin{align*}
|E_h|&\leq Ch \sum_{n=0}^{N-1}\Delta t \sum_{K\in \mathcal{T}}\sum_{L\in N(K)} |\sigma_{K,L}||p^n_K-p^n_L|\\
&\leq Ch \sum_{n=0}^{N-1}\Delta t \sum_{K\in \mathcal{T}}\sum_{L\in N(K)} |\sigma_{K,L}|d_{K,L}\\
&\leq Ch |\O| T.
\end{align*}
Now, for the identification of the limit (\ref{ch4:r=rhos}):\\
Due to the monotonicity of the function $\rho$, we have
$$\int_{Q_T}(\phi_h \rho(p_h) s_h B(s_h)-\phi_h \rho(v) s_h B(s_h))\,dxdt \geq 0,~~ \forall v\in L^2(Q_T),$$
this with the strong convergence (\ref{ch4:cvhp}) and the weak convergence (\ref{ch4:cvhsb}) lead to,
$$\int_{Q_T}(U-\phi \rho(v) s B(s))\,dxdt \geq 0,~~ \forall v\in L^2(Q_T).$$
Finally, choose $v=p+\delta w$ with $\delta \in ]0,1]$ and $w\in  L^2(Q_T)$, then
$$\int_{Q_T}(U-\phi \rho(p+\delta w) s B(s))w\,dxdt \geq 0$$
letting $\delta$ goes to zero, we establish (\ref{ch4:r=rhos}).\\
To conclude the a.e. convergence (\ref{ch4:f1f2}),on one hand, when $s_{h}\to s=0$ a.e., $f_1(p_{h})f_2(s_{h})\to 0=f_1(p)f_2(s)$ a.e.
(since $f_2(0)=0$ and $f_1$ is bounded). On the other hand, when $s_{h}\to s\neq 0$, in light of (\ref{ch4:cvhp}) we have $f_1(p_{h})\to f_1(p_1)$ a.e.. Then, $f_1(p_{h})f_2(s_{h})\to f_1(p)f_2(s)$ since $f_1$, $f_2$ are  continuous and this establish (\ref{ch4:f1f2}).\end{proof}

\begin{theorem}
Assume (H\ref{ch4:hyp:H1})-(H\ref{ch4:hyp:H6}) hold. Then the functions p,s defined in proposition \ref{ch4:propconvh} constitute a weak solution of the system (\ref{ch4:gas})-(\ref{ch4:water}).
\end{theorem}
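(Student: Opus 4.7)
The plan is to fix test functions $\varphi,\xi\in \mathcal{D}([0,T)\times\Omega)$, project them on the mesh by setting $\varphi_K^n := \varphi(t^n, x_K)$ and likewise for $\xi$, then multiply the discrete gas equation \eqref{ch4:prob-discr1} by $\Delta t\,\varphi_K^{n+1}$ and the discrete water equation \eqref{ch4:prob-discr2} by $\Delta t\,\xi_K^{n+1}$. Summing over $K\in\mathcal{T}$ and $n\in\{0,\dots,N-1\}$ and applying the discrete integration by parts Lemma \ref{ch4:lemm-integ-parts} (or equivalently Lemma \ref{ch4:lam:stokes}), each discrete equation rewrites as a sum of six contributions that mirror term-by-term the weak formulation in Definition \ref{ch4:thmain}: a discrete time-derivative term (with the initial data coming from an Abel summation in time), a capillary diffusion term, a pressure convection term built from $G_i$, a gravity term built from $F_i^{n+1}$, and source terms.

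I would then pass to the limit $(\Delta t,h)\to (0,0)$ termwise. For the time-derivative term, the standard summation by parts in $n$ produces $-\iint_{Q_T}\phi_h \rho(p_h)s_h\,\partial_t\tilde\varphi\,dxdt$ up to an error coming from the $C^2$-smoothness of $\varphi$, and passing to the limit uses the strong $L^1$ convergence \eqref{ch4:cvhp} together with the identification $U=\phi\rho(p)sB(s)$ from \eqref{ch4:r=rhos} which, combined with strict monotonicity of $s\mapsto sB(s)$ and of $\rho$, yields $\rho(p_h)s_h\to \rho(p)s$ a.e. in $Q_T$. The capillary term is handled by expressing $\sum_L\frac{|\sigma_{K,L}|}{d_{K,L}}(\beta(s^{n+1}_L)-\beta(s^{n+1}_K))\varphi^{n+1}_K$ as a sum on diamonds involving $\nabla_h\beta(s_h)\cdot\nabla_h\varphi_h$, using weak convergence \eqref{ch4:cvhpw} and the a.e. convergence of $\rho(p_h)$ on $\{s>0\}$ given by \eqref{ch4:f1f2} (noting $\nabla\beta(s)=0$ a.e. on $\{s=0\}$ by the chain rule). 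The gravity and source terms are routine: they combine bounded coefficients with the a.e. convergence \eqref{ch4:cvpps} and the uniform bounds of Proposition \ref{ch4:prop:LPBV}, so Lebesgue dominated convergence applies.

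The main obstacle will be the pressure-convection term $\sum_L G_i(s^{n+1}_K,s^{n+1}_L;dp^{n+1}_{K,L})$. The target is $\iint_{Q_T} M_i(s)\nabla p\cdot\nabla\varphi$, but we only have weak convergence of $\nabla_h p_h$ and strong convergence of $s_h$. The strategy is to split the numerical flux into a consistent part $-M_i(s^{n+1}_L)dp^{n+1}_{K,L}$ (resp.\ using the upwind choice described after \eqref{ch4:Hypfluxes}) plus a remainder, then rearrange the sum on diamonds to make $\nabla_h p_h\cdot\nabla_h\varphi_h$ appear with a coefficient built from $M_i$ evaluated at saturations close to $s_K$ and $s_L$. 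Because $s_h\to s$ strongly in every $L^p(Q_T)$, $p\geq 1$, and $M_i$ is continuous and bounded, the coefficient converges strongly in any $L^q$, $q<\infty$, and can be multiplied by the weakly convergent gradient of $p$. The remainder, of the form $M_i(s^{n+1}_L)-M_i(s^{n+1}_K)$ times $dp^{n+1}_{K,L}\,\varphi^{n+1}_K$, is controlled by Cauchy--Schwarz using the uniform $L^2$ bound on $\nabla_h p_h$ from \eqref{ch4:est:grad-norm} and the fact that $\iint|\nabla_h M_i(s_h)|^2\to 0$ in some weak sense coming from the compactness of $(s_h)_h$ together with the uniform $L^2$ bound on $\nabla_h\beta(s_h)$ (via the Hölder property of $\beta^{-1}$ in hypothesis (H\ref{ch4:hyp:H4})); on the degenerate set $\{s=0\}$ the density $\rho(p_h)$ is not known to converge a.e., but this is harmless since $M_1$ vanishes there. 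The same kind of ``pass the strong coefficient against the weak gradient'' argument, together with the identification of the limit density on $\{s>0\}$, closes every nonlinear term and yields \eqref{ch4:pmain}--\eqref{ch4:smain}.
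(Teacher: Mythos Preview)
Your overall strategy matches the paper's: project the test functions onto the mesh, multiply \eqref{ch4:prob-discr1}--\eqref{ch4:prob-discr2} by $\Delta t\,\varphi(t^{n+1},x_K)$, sum, apply Lemma~\ref{ch4:lemm-integ-parts}, and pass to the limit term by term. Your treatment of the time-derivative, convection ($G_i$), gravity and source terms is essentially the paper's; for the convection term the paper also introduces $\underline{s}_h,\overline{s}_h$ and shows $|\overline{s}_h-\underline{s}_h|\to 0$ a.e.\ via the $L^2$ bound on $\nabla_h\beta(s_h)$, which is your remainder estimate.

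The genuine gap is the capillary term in the \emph{gas} equation, i.e.\ the term carrying the interface density $\rho^{n+1}_{K,L}$. After summation by parts it reads $\int \rho^{n+1}_{K,L}\,\nabla_h\beta(s_h)\cdot(\nabla\varphi)_h$. You propose to split $Q_T=\{s>0\}\cup\{s=0\}$, use a.e.\ convergence of $\rho(p_h)$ on $\{s>0\}$ from \eqref{ch4:f1f2}, and invoke $\nabla\beta(s)=0$ on $\{s=0\}$. But on $\{s=0\}$ you only have $\nabla_h\beta(s_h)\rightharpoonup 0$ \emph{weakly}, while the factor $\rho^{n+1}_{K,L}(\nabla\varphi)_h\mathbf{1}_{\{s=0\}}$ is merely bounded in $L^2$ with no strong limit; a product of two weakly convergent sequences cannot be identified this way. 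Knowing that the target vanishes on $\{s=0\}$ does not force the pre-limit integral there to vanish.

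The paper's fix is a discrete product rule. It first replaces $\rho^{n+1}_{K,L}$ by the arithmetic mean $\overline{\rho}^{n+1}_{K,L}=\tfrac12(\rho(p^{n+1}_K)+\rho(p^{n+1}_L))$, the error being $O(h)$ in $L^2$ by the Lipschitz bound on $\rho$ and \eqref{ch4:est:grad-norm}. Then it writes
\[
\overline{\rho}^{n+1}_{K,L}\bigl(\beta(s^{n+1}_L)-\beta(s^{n+1}_K)\bigr)
=\bigl(\rho(p^{n+1}_L)\beta(s^{n+1}_L)-\rho(p^{n+1}_K)\beta(s^{n+1}_K)\bigr)
-\overline{\beta}^{n+1}_{K,L}\bigl(\rho(p^{n+1}_L)-\rho(p^{n+1}_K)\bigr),
\]
with $\overline{\beta}^{n+1}_{K,L}=\tfrac12(\beta(s^{n+1}_K)+\beta(s^{n+1}_L))$. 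The first piece is $\int \nabla_h(\rho(p_h)\beta(s_h))\cdot(\nabla\varphi)_h$; crucially, $\rho(p_h)\beta(s_h)\to\rho(p)\beta(s)$ a.e.\ on \emph{all} of $Q_T$ by \eqref{ch4:f1f2} with $f_2=\beta$ (since $\beta(0)=0$), so its discrete gradient, bounded in $L^2$ by \eqref{ch4:est:grad-norm}--\eqref{ch4:est:grad-norm1}, has the correct weak limit. The second piece pairs the strongly convergent $\overline{\beta}(s_h)\to\beta(s)$ in $L^2$ against the weakly convergent $\nabla_h\rho(p_h)$. Each summand thus has one factor converging strongly on the whole of $Q_T$, which is exactly what your direct splitting lacks.
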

\begin{proof}
Let $T$ be a fixed positive constant and $\varphi \in \mathcal{D}([0,T)\times \overline{\Omega})$.

$\bullet$ Convergence of the discrete water equation\\
For the discrete water equation, we multiply the equation  \eqref{ch4:prob-discr2} by $\Delta t\varphi(t^{n+1},x_K)$ for all
$K \in \TT$ and $n \in  \{0,\ldots,N\}$. This yields

\begin{equation*}
\mathfrak{C}_1^{h}+\mathfrak{C}_2^{h}+\mathfrak{C}_3^{h}+\mathfrak{C}_4^{h}+\mathfrak{C}_5^{h}+\mathfrak{C}_6^{h}=0
\end{equation*}
where
\begin{equation*}\begin{split}
\mathfrak{C}_1^h&=\sn \sum_{K\in \TT}\abs{K}\phi_K(s^{n+1}_K-s^{n}_K)\varphi(t^{n+1},x_K),\\
\mathfrak{C}_2^h&=-\sn \Delta t \,
\sum_{K\in \TT} \sum_{L\in N(K)}
\frac{\abs{\sigma_{K,L}}}{d_{K,L}}
(\beta(s^{n+1}_{L})-\beta(s^{n+1}_{K}))\varphi(t^{n+1},x_K),\\
\mathfrak{C}_3^h&=\sn \Delta t \sum_{K\in \TT}\sum_{L\in N(K)}
 G_2(s^{n+1}_K,s^{n+1}_{L};dp^{n+1}_{K,L})\varphi(t^{n+1},x_K),\\
\mathfrak{C}_4^h&=\sn \Delta t \sum_{K\in \TT}\Big( \rho_2 M_2(s^{n+1}_L)\sum_{L \in N(K) }\int_{K/L}({\bf g}\cdot \eta_{K,L})^+\, d\gamma(x)\\
&- \rho_2 M_2(s^{n+1}_K)\sum_{L \in N(K) }\int_{K/L}({\bf g}\cdot \eta_{K,L})^-\, d\gamma(x)   \Big)\varphi(t^{n+1},x_K),\\
\mathfrak{C}_5^h&=\sn \Delta t \sum_{K\in \TT}\abs{K}(s^{n+1}_K-1) f_{P,K}^{n+1}\varphi(t^{n+1},x_K)\\
\mathfrak{C}_6^h&=\sn \Delta t \sum_{K\in \TT}\abs{K} f_{I,K}^{n+1}\varphi(t^{n+1},x_K).
\end{split}\end{equation*}

Performing integration by parts and keeping in mind
that $\varphi(T,x_K)=0$ for all $K \in \TT$, we obtain
\begin{equation*}
\begin{split}
\mathfrak{C}_1^h&=-\sn \sum_{K\in \TT}\abs{K}\phi_K s^{n+1}_K(\varphi(t^{n+1},x_K)-\varphi(t^n,x_K))
-\sum_{K\in \TT}\abs{K}\phi_K s^{0}_K\varphi(0,x_K)\\
&=-\sn \sum_{K\in \TT} \int_{t^n}^{t^{n+1}}\int_K \phi_K s^{n}_K
\partial_t \varphi(t,x_K)\dx \dt-\sum_{K\in \TT}\int_K \phi_K s_0(x) \varphi(0,x_K)\dx\\
&=:-\mathfrak{C}^h_{1,1}-\mathfrak{C}^h_{1,2}.
\end{split}
\end{equation*}
Let us also introduce
\begin{equation*}
\begin{split}
\mathfrak{C}^{h,*}_1 &=-\sn \sum_{K\in \TT} \int_{t^n}^{t^{n+1}}\int_K \phi_K s^{n}_K
\partial_t \varphi(t,x)\dx \dt-\int_\Omega \phi_h s_0(x)\varphi(0,x)\dx\\
&=:-\mathfrak{C}_{1,1}^{h,*}-\mathfrak{C}_{1,2}^{h,*}.
\end{split}
\end{equation*}
Now and due to the fact that the saturation and the porosity functions are bounded, we have
\begin{equation*}\begin{split}
\abs{\mathfrak{C}_{1,1}^{h}-\mathfrak{C}_{1,1}^{h,*}}
&= \abs{\sn \sum_{K\in \TT}\phi_K s^{n}_{K} \int_{t^n}^{t^{n+1}}\int_K
\Bigl( \partial_t \varphi(t,x_K)-\partial_t \varphi(t,x)\Bigl)\dx\dt}\\
&\leq \phi_1 \sn \sum_{K\in \TT}  \int_{t^n}^{t^{n+1}}\int_K
\Big| \partial_t \varphi(t,x_K)-\partial_t \varphi(t,x)\Big|\dx\dt
\end{split}\end{equation*}
 using that the  function  $\varphi$ is regular enough, we get
 \begin{equation}\label{ch4:C11h}
 \lim_{h \to 0}\abs{\mathfrak{C}_{1,1}^h-\mathfrak{C}_{1,1}^{h,*}}=0.
 \end{equation}
Similarly
\begin{equation*}
\begin{split}
\mathfrak{C}^h_{1,2}-\mathfrak{C}_{1,2}^{h,*}&=\sum_{K\in \TT}\int_K \phi_K s_0(x)(\varphi(0,x_K)-\varphi(0,x))\dx\\
&=\int_{\Omega}\phi_h s_0(x)(\varphi(0,x_K)-\varphi(0,x))\dx.
\end{split}
\end{equation*}
By the regularity of $\varphi$, there exists a positive constant $C$ such that\\
$\abs{\varphi(0,x_K)-\varphi(0,x)}\le C\,h$.
This implies
\begin{equation*}
\begin{split}
\abs{\mathfrak{C}_{1,2}^h-\mathfrak{C}_{1,2}^{h,*}}&\le C \, h \phi_1\sum_{K\in \TT}\int_K s_0(x)\dx.
\end{split}
\end{equation*}
Sending $h\to 0$ in the above inequality, we get
\begin{equation}\label{ch4:C12h}
\disp \lim_{h \to 0}\abs{\mathfrak{C}_{1,2}^h-\mathfrak{C}_{1,2}^{h,*}}=0.
\end{equation}

Combining (\ref{ch4:C11h}) with (\ref{ch4:C12h}), we obtain
\begin{equation}\label{ch4:conv:c1h}
 \lim_{h \to 0}\abs{\mathfrak{C}_{1}^{h}-\mathfrak{C}_{1}^{h,*}}=0,
 \end{equation}
but, $\mathfrak{C}_{1}^{h,*}$ can be written equivalently,
\begin{equation*}
\mathfrak{C}_1^{h,*} =\int_{Q_T} \phi_h s_h
\partial_t \varphi(t,x)\dx \dt-\int_{\Omega} \phi_h s^{0} \varphi(0,x)\dx.
\end{equation*}
Since   the bounded functions $ \phi_h $ and $ s_h$ converge almost everywhere respectively to $\phi$ and $ s$,
and as a consequence of Lebesgue dominated convergence theorem, we get
\begin{equation*}
\lim_{h \to 0}\mathfrak{C}_1^{h}=\lim_{h \to 0}\mathfrak{C}_1^{h,*} =\int_{Q_T} \phi s
\partial_t \varphi(t,x)\dx \dt-\int_{\Omega} \phi s^{0} \varphi(0,x)\dx.
\end{equation*}
Now, let us show that
\begin{equation}\label{ch4:conv:c2}
\lim_{h\to 0}\mathfrak{C}_2^h= \int_0^T\int_\O \nabla \beta(s)\cdot\nabla \varphi\;dxdt.
\end{equation}
Integrating by parts
\begin{align*}
\mathfrak{C}_2^h&=\frac 1 2\sn \Delta t \,
\sum_{K\in \TT} \sum_{L\in N(K)}
\abs{T_{K,L}}
l\,\frac{\beta(s^{n+1}_{L})-\beta(s^{n+1}_{K})}{d_{K,L}}\frac{\varphi(t^{n+1},x_L) - \varphi(t^{n+1},x_K)}{d_{K,L}}\\
& =
\frac 1 2\sn \Delta t \,
\sum_{K\in \TT} \sum_{L\in N(K)} \abs{T_{K,L}}\nabla_{K,L}\beta(s_h^{n+1})\cdot\eta_{K,l}\nabla\varphi(t^{n+1},x_{K,L})\cdot \eta_{K,L},
\end{align*}
where $x_{K,L}=\theta x_K+(1-\theta)x_L$, $0<\theta < 1$, is some point on the segment $]x_K,x_L[$. Recall that the value of $\nabla_{K,L}$ is directed by $\eta_{K,L}$, so
$$
\nabla_{K,L}\beta(s_h^{n+1})\cdot\eta_{K,l}\nabla\varphi(t^{n+1},x_{K,L})\cdot \eta_{K,L} = \nabla_{K,L}\beta(s_h^{n+1})\cdot \nabla\varphi(t^{n+1},x_{K,L}).
$$
since each term corresponding to the diamond $T_{K,L}$ appears twice,
$$
\mathfrak{C}_2^h = \int_0^T\int_\O \nabla_h \beta(s_h)\cdot(\nabla \varphi)_h\;dxdt,
$$
where
$$
(\nabla \varphi)_h|_{(t^n,t^{n+1}]\times T_{K,L}} :=  \nabla\varphi(t^{n+1},x_{K,L})
$$
Observe that from the continuity of $\varphi$ we get $(\nabla \varphi)_h\rightarrow \nabla \varphi $  in $L^\infty(Q_T)$. Hence the convergence (\ref{ch4:conv:c2})
is a consequence of (\ref{ch4:cvhpw}).\\
Now, we show the convergence of the flux,
\begin{equation}\label{ch4:conv:c3}
\lim_{h\to 0}\mathfrak{C}_3^h= -\int_0^T\int_\O M_2(s)\nabla p \cdot\nabla \varphi\;dxdt.
\end{equation}
Perform integration by parts (\ref{ch4:pppF}), thanks to the consistency of the fluxes, we obtain
$$ \mathfrak{C}_3^h=-\frac{1}{2}\sn \Delta t \sum_{K\in \TT}\sum_{L\in N(K)}
 G_2(s^{n+1}_K,s^{n+1}_{L};dp^{n+1}_{K,L})\Big( \varphi(t^{n+1},x_L)-\varphi(t^{n+1},x_K)\Big).$$
 For each couple of neighbours $K,L$ we denote $s^{n+1}_{K,L}$ the minimum of $s^{n+1}_K$ and $s^{n+1}_L$ and we introduce,

$$\mathfrak{C}_3^{h,*}=-\frac{1}{2}\sn \Delta t \sum_{K\in \TT}\sum_{L\in N(K)}
M_2(s^{n+1}_{K,L}) dp^{n+1}_{K,L}\Big( \varphi(t^{n+1},x_L)-\varphi(t^{n+1},x_K)\Big).$$
Define $\overline{s}_h$ and $\underline{s}_h$ by
$$\overline{s}_h|_{(t^n,t^{n+1}]\times T_{K,L}}:=\max\{s^{n+1}_K, s^{n+1}_L\},~~~~\underline{s}_h|_{(t^n,t^{n+1}]\times T_{K,L}}:=\min\{s^{n+1}_K, s^{n+1}_L\}.$$

Now, $\mathfrak{C}_3^{h,*}$ can be written under the following continues form,
$$\mathfrak{C}_3^{h,*}=-\int_0^T\int_\O M_2(\underline{s}_h)\nabla_h p_h \cdot(\nabla \varphi)_h\;dxdt.$$
By the monotonicity of $\beta$ and thanks to the estimate (\ref{ch4:est:grad-norm1}), we have
\begin{align*}
\int_0^T\int_\O |\beta(\overline{s}_h)-\beta(\underline{s}_h)|^2\,dxdt
&\leq \sn \Delta t \sum_{K\in \TT}\sum_{L\in N(K)}|T_{K,L}|\Big(\beta({s^{n+1}_L})-\beta({s^{n+1}_K})\Big)^2\\
&\leq Ch^2\sn \Delta t \sum_{K\in \TT}\sum_{L\in N(K)}\frac{|\sigma_{K,L}|}{d_{K,L}}|\beta({s^{n+1}_L})-\beta({s^{n+1}_K})|^2\\
&\leq Ch^2.
\end{align*}
Since $\beta^{-1}$ is continues, we deduce up to a subsequence,
\begin{equation}\label{ch4:conv:nbh}
|\overline{s}_h- \underline{s}_h|\rightarrow 0 \text{  a.e. on } Q_T.
\end{equation}
Moreover, we have; $\underline{s}_h\leq s_h \leq \overline{s}_h $ and $s_h\rightarrow s \text{ a.e. on } Q_T$. Consequently, and due to the continuity of the mobility function $M_2$ we have $M(\underline{s}_h)\rightarrow M(s) \text{ a.e. on } Q_T $  and in $L^p(Q_T)\text{ for } p<+\infty$. Using proposition \ref{ch4:propconvh} (\ref{ch4:cvhs}) and the strong convergence of $(\nabla \varphi)_h$ to $\nabla \varphi$, we obtain that
$$ \lim_{h\rightarrow 0} \mathfrak{C}_3^{h,*}=-\int_0^T\int_\O M_2(s)\nabla p \cdot\nabla \varphi\;dxdt.$$
It remains to show that
\begin{equation}\label{ch4:conv:c3*}
\lim_{h\rightarrow 0}| \mathfrak{C}_3^{h}-\mathfrak{C}_3^{h,*}|=0
\end{equation}
By the properties of the numerical flux function  (\ref{ch4:Hypfluxes}) we have
\begin{align*}
&|G_2(s^{n+1}_K,s^{n+1}_{L};dp^{n+1}_{K,L})- M_2(s^{n+1}_{K,L})dp^{n+1}_{K,L}|\\
=&|G_2(s^{n+1}_K,s^{n+1}_{L};dp^{n+1}_{K,L})- G_2(s^{n+1}_{K,L},s^{n+1}_{K,L};dp^{n+1}_{K,L})|\\
\leq &|dp^{n+1}_{K,L}|~~\omega(2|s^{n+1}_{L}-s^{n+1}_{K}|).
\end{align*}
Consequently,
$$ | \mathfrak{C}_3^{h}-\mathfrak{C}_3^{h,*}|\leq \int_0^T\int_\O \omega(2|s^{n+1}_{L}-s^{n+1}_{K}|)\nabla_hp_h\cdot(\nabla\varphi)_h\,dxdt $$
Applying the Cauchy-Schwarz inequality, and thanks to the uniform bound on $\nabla_hp_h$ and the convergence (\ref{ch4:conv:nbh}), we establish (\ref{ch4:conv:c3*}).
Now, we treat the convergence of the gravity term
\begin{equation}\label{ch4:conv:c4}
\lim_{h\to 0}\mathfrak{C}_4^h= -\int_0^T\int_\O \rho_2 M_2(s){\bf g} \cdot\nabla \varphi\;dxdt.
\end{equation}
Perform integration by parts (\ref{ch4:pppF}),
\begin{align*}
\mathfrak{C}_4^h&=\sn \Delta t \sum_{K\in \TT}\sum_{L\in N(K)}F^{n+1}_{2,K,L}\varphi(t^{n+1},x_K)\\
&=-\frac{1}{2}\sn \Delta t \sum_{K\in \TT}\sum_{L\in N(K)}F^{n+1}_{2,K,L}\Big(\varphi(t^{n+1},x_L) -\varphi(t^{n+1},x_K) \Big)
\end{align*}
We introduce,
\begin{multline*}
\mathfrak{C}_4^{h,*}=-\frac{1}{2}\sn \Delta t \sum_{K\in \TT}\sum_{L\in N(K)}M_2(s^{n+1}_{K,L})\int_{K/L}({\bf g}\cdot \eta_{K,L})\, d\gamma(x)\\
\Big(\varphi(t^{n+1},x_L) -\varphi(t^{n+1},x_K) \Big)
\end{multline*}
where $s^{n+1}_{K,L}:=\min \{s^{n+1}_{K},s^{n+1}_{L}\}$. We have
\begin{multline*}
\mathfrak{C}_4^{h}-\mathfrak{C}_4^{h,*}=\sn \Delta t \sum_{K\in \TT}\sum_{L\in N(K)}\Big( F^{n+1}_{2,K,L}- M_2(s^{n+1}_{K,L})\int_{K/L}({\bf g}\cdot \eta_{K,L})\, d\gamma(x)  \Big)\\
\Big(  \varphi(t^{n+1},x_L) -\varphi(t^{n+1},x_K) \Big)
\end{multline*}
relying on the assumption (H\ref{ch4:hyp:H3}), that  the mobility functions are Lipschitz, and  $\beta^{-1}$ is a Holder function, we deduce that
\begin{align*}
&\Big|( F^{n+1}_{2,K,L}- M_2(s^{n+1}_{K,L})\int_{K/L}({\bf g}\cdot \eta_{K,L})\, d\gamma(x) \Big|\\
&=\Big| \Big(M_2(s^{n+1}_{L})-M_2(s^{n+1}_{K,L})\Big)\int_{K/L}({\bf g}\cdot \eta_{K,L})^+\, d\gamma(x)\\
&-\Big(M_2(s^{n+1}_{K})-M_2(s^{n+1}_{K,L})\Big)\int_{K/L}({\bf g}\cdot \eta_{K,L})^-\, d\gamma(x)\Big| \\
&\leq C |{\bf g}| |\sigma_{K,L}| \Big| s^{n+1}_{L}-s^{n+1}_{K}\Big|\\
&\leq C |{\bf g}| |\sigma_{K,L}| \Big| \beta(s^{n+1}_{L})-\beta(s^{n+1}_{K})\Big|^\theta,
 \end{align*}
this yields to
\begin{align*}
|\mathfrak{C}_4^{h}-\mathfrak{C}_4^{h,*}|\leq C \sn \Delta t \sum_{K\in \TT}\sum_{L\in N(K)}|\sigma_{K,L}| d_{K,L}\big| \beta(s^{n+1}_{L})-\beta(s^{n+1}_{K})\big|^\theta
\big|\nabla_h \varphi_h  \big|.
\end{align*}
Using estimate \eqref{ch4:est:grad-norm1}, and Cauchy Schwarz inequality, then
$$
|\mathfrak{C}_4^{h}-\mathfrak{C}_4^{h,*}| \to 0 \text{ when }h\to 0.
$$
For the convergence of the source terms, $\mathfrak{C}_5^{h}+\mathfrak{C}_6^{h}$ can be written equivalently,
\begin{multline*}
\mathfrak{C}_5^{h}+\mathfrak{C}_6^{h}=\sn  \sum_{K\in \TT}\int_{t^n}^{t^{n+1}}\int_K  (s^{n+1}_K -1)f_P(t,x) \varphi(t^{n+1},x_K) \,dxdt\\
+\sn  \sum_{K\in \TT}\int_{t^n}^{t^{n+1}}\int_K f_I(t,x) \varphi(t^{n+1},x_K)   \,dxdt
\end{multline*}

Now, we introduce

\begin{multline*}
\mathfrak{C}_5^{h,*}+\mathfrak{C}_6^{h,*}=\sn  \sum_{K\in \TT}\int_{t^n}^{t^{n+1}}\int_K  (s^{n+1}_K -1)f_P(t,x) \varphi(t,x) \,dxdt\\
+\sn  \sum_{K\in \TT}\int_{t^n}^{t^{n+1}}\int_K f_I(t,x) \varphi(t,x) \,dxdt
\end{multline*}
 Due to regularity of $\varphi$ we obtain,
 $$
 \abs{\mathfrak{C}_5^{h}+\mathfrak{C}_6^{h}-\mathfrak{C}_5^{h,*}-\mathfrak{C}_6^{h,*}}\leq C (\Delta t +h)(\parallel f_P\parallel_{L^1(Q_T)}+\parallel f_I\parallel_{L^1(Q_T)})
 $$
 and this ensures,
 $$
 \lim_{h\to 0}\abs{\mathfrak{C}_5^{h}+\mathfrak{C}_6^{h}-\mathfrak{C}_5^{h,*}-\mathfrak{C}_6^{h,*}}=0.
 $$
We can write equivalently,
\begin{equation*}
\mathfrak{C}_5^{h,*}+\mathfrak{C}_6^{h,*}=\int_{Q_T} (s_h -1)f_P(t,x) \varphi(t,x) \,dxdt
+\int_{Q_T} f_I(t,x) \varphi(t,x) \,dxdt
\end{equation*}
Finally, by the convergence of the saturation function we get,
\begin{align*}
 \lim_{h\to 0}(\mathfrak{C}_5^{h}+\mathfrak{C}_6^{h})
 &=\lim_{h\to 0}(\mathfrak{C}_5^{h,*}+\mathfrak{C}_6^{h,*})\\
&=\int_{Q_T} (s -1)f_P(t,x) \varphi(t,x) \,dxdt
+\int_{Q_T} f_I(t,x) \varphi(t,x) \,dxdt
 \end{align*}

$\bullet$ Convergence of the discrete gas equation\\

Multiplying the discrete gas  equation  \eqref{ch4:prob-discr1} by $\Delta t\varphi(t^{n+1},x_K)$ for all
$K \in \TT$ and $n \in  \{0,\ldots,N\}$.
Summing the result over $K$ and $n$ yields
\begin{equation*}%\label{ch4:conv:1}
\C_1^h+\C_2^h+\C_3^h+\C_4^h+\C_5^h=0,
\end{equation*}
where
\begin{equation*}\begin{split}
\C_1^h&=\sn \sum_{K\in \TT}\abs{K}\phi_K(\rho(p^{n+1}_K)s^{n+1}_K-\rho(p^{n}_K)s^{n}_K)\varphi(t^{n+1},x_K),\\
\C_2^h&=-\sn \Delta t \,
\sum_{K\in \TT} \sum_{L\in N(K)}
\frac{\abs{\sigma_{K,L}}}{d_{K,L}} \rho^{n+1}_{K,L}
(\beta(s^{n+1}_{L})-\beta(s^{n+1}_{K}))\varphi(t^{n+1},x_K),\\
\C_3^h&=\sn \Delta t \sum_{K\in \TT}\sum_{L\in N(K)}
 \rho^{n+1}_{K,L}G_1(s^{n+1}_K,s^{n+1}_{L};dp^{n+1}_{K,L})\varphi(t^{n+1},x_K),\\
\C_4^h&=\sn \Delta t \sum_{K\in \TT}\Big( \rho^2(p^{n+1}_K) M_1(s^{n+1}_K)\sum_{L \in N(K) }\int_{K/L}({\bf g}\cdot \eta_{K,L})^+\, d\gamma(x)\\
&- \rho^2(p^{n+1}_L) M_1(s^{n+1}_L)\sum_{L \in N(K) }\int_{K/L}({\bf g}\cdot \eta_{K,L})^-\, d\gamma(x)   \Big)\varphi(t^{n+1},x_K),\\
\C_5^h&=\sn \Delta t \sum_{K\in \TT}\abs{K}\rho(p^{n+1}_K)s^{n+1}_K f_{P,K}^{n+1}\varphi(t^{n+1},x_K).
\end{split}\end{equation*}

Performing integration by parts and keeping in mind
that $\varphi(T,x_K)=0$ for all $K \in \TT$, we obtain
\begin{equation*}
\begin{split}
\C_1^h=&-\sn \sum_{K\in \TT}\abs{K}\phi_K \rho(p^{n+1}_K)s^{n+1}_K(\varphi(t^{n+1},x_K)-\varphi(t^n,x_K))\\
&-\sum_{K\in \TT}\abs{K}\phi_K \rho(p^{0}_K)s^{0}_K\varphi(0,x_K)\\
=&-\sn \sum_{K\in \TT} \int_{t^n}^{t^{n+1}}\int_K \phi_K \rho(p^{n+1}_K)s^{n+1}_K \partial_t \varphi(t,x_K)\dx \dt\\
&-\sum_{K\in \TT}\int_K \phi_K \rho(p^{0}_K)s^{0}_K \varphi(0,x_K)\dx\\
=:&-\C^h_{1,1}-\C^h_{1,2}.
\end{split}
\end{equation*}
Let us also introduce
\begin{equation*}
\begin{split}
\C_1^{h,*} =&-\sn \sum_{K\in \TT} \int_{t^n}^{t^{n+1}}\int_K \phi_K \rho(p^{n+1}_K)s^{n+1}_K
\partial_t \varphi(t,x)\dx \dt\\
&-\sum_{K\in \TT}\int_K \phi_K \rho(p^{0}_K)s^{0}_K \varphi(0,x)\dx\\
=:&-\C_{1,1}^{h,*}-\C_{1,2}^{h,*}.
\end{split}
\end{equation*}
Then
\begin{equation*}
\begin{split}
\C^h_{1,2}-\C_{1,2}^{h,*}&=\sum_{K\in \TT}\int_K \phi_K \rho(p^{0}_K)s^{0}_K(\varphi(0,x_K)-\varphi(0,x))\dx.
\end{split}
\end{equation*}
By the regularity of $\varphi$, there exists a positive constant $C$ such that\\
$\abs{\varphi(0,x_K)-\varphi(0,x)}\le C\,h$.
This implies
\begin{equation*}
\begin{split}
\abs{\C_{1,2}^h-\C_{1,2}^{h,*}}&\le C \, h \abs{\Omega}.
\end{split}
\end{equation*}
Sending $h\to 0$ in the above inequality, we get
\begin{equation}\label{ch4:conv:c12h}
 \lim_{h \to 0}\abs{\C_{1,2}^{h}-\C_{1,2}^{h,*}}=0.
 \end{equation}

Now, due to the fact that the saturation function is bounded and the assumptions  (H\ref{ch4:hyp:H1}),(H\ref{ch4:hyp:H6})on the porosity and the density, we have
\begin{equation*}\begin{split}
\abs{\C_{1,1}^{h}-\C_{1,1}^{h,*}}
&= \abs{\sn \sum_{K\in \TT} \phi_K \rho(p^{n+1}_K)s^{n+1}_K \int_{t^n}^{t^{n+1}}\int_K
\Bigl( \partial \varphi(t,x_K)-\partial \varphi(t,x)\Bigl)\dx\dt}\\
&\leq \phi_1 \rho_M \sn \sum_{K\in \TT}  \int_{t^n}^{t^{n+1}}\int_K
\Big| \partial \varphi(t,x_K)-\partial \varphi(t,x)\Big|\dx\dt
\end{split}\end{equation*}
 using that the  function  $\varphi$ is regular enough, we get
 \begin{equation}\label{ch4:conv:c11h}
  \lim_{h \to 0}\abs{\C_{1,1}^h-\C_{1,1}^{h,*}}=0.
 \end{equation}
Combining (\ref{ch4:conv:c11h}) with (\ref{ch4:conv:c12h}), we obtain
\begin{equation}\label{ch4:conv:c1h}
 \lim_{h \to 0}\abs{\C_{1}^{h}-\C_{1}^{h,*}}=0
 \end{equation}
but, $\C_{1}^{h,*}$ can be written equivalently,
\begin{equation*}
\C_1^{h,*} =\int_{Q_T} \phi_h \rho(p_h)s_h
\partial_t \varphi(t,x)\dx \dt-\int_{\Omega} \phi_h \rho(p^{0}_h)s^{0}_h \varphi(0,x)\dx.
\end{equation*}
Since   $ \phi_h \rho(p_h)s_h$ and $\phi_h \rho(p^{0}_h)s^{0}_h$ converge almost everywhere respectively to $\phi \rho(p)s$ and $\phi \rho(p^{0})s^{0}$,
and as a consequence of Lebesgue dominated convergence theorem, we get
\begin{equation*}
\lim_{h \to 0}\C_1^{h}=\lim_{h \to 0}\C_1^{h,*} =\int_{Q_T} \phi \rho(p)s
\partial_t \varphi(t,x)\dx \dt-\int_{\Omega} \phi \rho(p^{0})s^{0} \varphi(0,x)\dx.
\end{equation*}
Now, let us show that
\begin{equation}
\lim_{h\to 0}\C_2^h= \int_0^T\int_\O \rho(p)\nabla \beta(s)\cdot\nabla \varphi\;dxdt.
\end{equation}
Integrating by parts
\begin{align*}
\C_2^h&=\frac 1 2\sn \Delta t \,
\sum_{K\in \TT} \sum_{L\in N(K)}
\abs{T_{K,L}} \rho^{n+1}_{K,L}
l\frac{\beta(s^{n+1}_{L})-\beta(s^{n+1}_{K})}{d_{K,L}}\frac{\varphi(t^{n+1},x_L) - \varphi(t^{n+1},x_K)}{d_{K,L}}\\
& =
\frac 1 2\sn \Delta t \,
\sum_{K\in \TT} \sum_{L\in N(K)} \abs{T_{K,L}}\rho^{n+1}_{K,L}\nabla_{K,L}\beta(s_h^{n+1})\cdot\eta_{K,l}\nabla\varphi(t^{n+1},x_{K,L})\cdot \eta_{K,L},
\end{align*}
where $x_{K,L}=\theta x_K+(1-\theta)x_L$, $0<\theta < 1$, is some point on the segment $]x_K,x_L[$. Recall that the value of $\nabla_{K,L}$ is directed by $\eta_{K,L}$, so
$$
\nabla_{K,L}\beta(s_h^{n+1})\cdot\eta_{K,l}\nabla\varphi(t^{n+1},x_{K,L})\cdot \eta_{K,L} = \nabla_{K,L}\beta(s_h^{n+1})\cdot \nabla\varphi(t^{n+1},x_{K,L}).
$$
since each term corresponding to the diamond $T_{K,L}$ appears twice,
\begin{equation}\label{ch4:c2h}
\C_2^h = \int_0^T\int_\O \rho(p_h)\nabla_h \beta(s_h)\cdot(\nabla \varphi)_h\;dxdt,
\end{equation}
where
$$
(\nabla \varphi)_h|_{(t^n,t^{n+1}]\times T_{K,L}} :=  \nabla\varphi(t^{n+1},x_{K,L})
$$
Define
\begin{equation}
D_2^h=-\sn \Delta t \,
\sum_{K\in \TT} \sum_{L\in N(K)}
\frac{\abs{\sigma_{K,L}}}{d_{K,L}} \overline{\rho}^{n+1}_{K,L}
(\beta(s^{n+1}_{L})-\beta(s^{n+1}_{K}))\varphi(t^n,x_K),
\end{equation}
where $\overline{\rho}^{n+1}_{K,L} = (\rho(P^{n+1}_{K}) + \rho(P^{n+1}_{L})/2$.
We have :
\begin{align*}
(\beta(s^{n+1}_{L})-\beta(s^{n+1}_{K})) \overline{\rho}^{n+1}_{K,L} &= (\beta(s^{n+1}_{L})\rho(P^{n+1}_{L})- \beta(s^{n+1}_{K})\rho(P^{n+1}_{K})) \\
&+ \beta(s^{n+1}_{L}) (\overline{\rho}^{n+1}_{K,L} - \rho(P^{n+1}_{L})) -
\beta(s^{n+1}_{K}) (\overline{\rho}^{n+1}_{K,L} - \rho(P^{n+1}_{K}))\\
&=(\beta(s^{n+1}_{L})\rho(P^{n+1}_{L})- \beta(s^{n+1}_{K})\rho(P^{n+1}_{K}))\\
&+(\beta(s^{n+1}_{L})+\beta(s^{n+1}_{K}) )( \rho(P^{n+1}_{K}) - \rho(P^{n+1}_{L}))/2.
\end{align*}
Then, $D_2^h$ can be rewritten
$$
D_2^h= D_3^h + D_4^h
$$
where
\begin{multline*}
D_3^h = \sn \Delta t \,
\sum_{\sigma_{K,L}\in E}\tau_{K,L}
 \Big(\beta(s^{n+1}_{L})\rho(p^{n+1}_{L})- \beta(s^{n+1}_{K})\rho(p^{n+1}_{K})\Big) \\
 \Big(\varphi(t^{n+1},x_K)  -\varphi(t^{n+1},x_L) \Big)
 \end{multline*}
 \begin{align*}
 D_4^h = \sn \Delta t \,
\sum_{\sigma_{K,L}\in E}\tau_{K,L}  \beta^{n+1}_{K,L} \Big( \rho(p^{n+1}_{K}) - \rho(p^{n+1}_{L})\Big) \Big(\varphi(t^{n+1},x_K)  -\varphi(t^{n+1},x_L) \Big)
\end{align*}

where $\beta^{n+1}_{K,L} = (\beta(s^{n+1}_{L})+\beta(s^{n+1}_{K}) )/2$,  recall that $\tau_{K,L} =\frac{\abs{\sigma_{K,L}}}{d_{K,L}}$. Follow the same lines as in \eqref{ch4:c2h},
\begin{align*}
&D_3^h =  \int_0^T\int_\O \nabla_h (\rho(s_h)\beta(s_h))\cdot(\nabla \varphi)_h\;dxdt,\\
& D_4^h = \int_0^T\int_\O \overline{\beta}(s_h)\nabla_h \rho(p_h)\cdot(\nabla \varphi)_h\;dxdt
\end{align*}
Using \eqref{ch4:est:grad-norm} and \eqref{ch4:est:grad-norm1}, we have
$$
\nabla_h (\rho(p_h)\beta(s_h)) \longrightarrow \nabla (\rho(s)\beta(s)) \text{ weakly } in L^2(Q_T),
$$
and using the fact that $(\nabla \varphi)_h$ converges strongly in $L^2(Q_T)$, we have
$$
D_3^h \longrightarrow   \int_0^T\int_\O \nabla (\rho(s)\beta(s))\cdot\nabla \varphi\;dxdt.
$$
In order to handle the convergence of $D_4^h$ we are going to show
\begin{equation}\label{ch4:betabar}
\overline{\beta}(s_h)\longrightarrow \beta(s) \text{ strongly in } in L^2(Q_T),
\end{equation}
and
\begin{equation} \label{ch4:gradrhoh}
\nabla_h \rho(p_h) \longrightarrow \nabla \rho^\star \text{ weakly in } in L^2(Q_T).
\end{equation}

The sequence $(\rho(p_h))_h$ is bounded then
\begin{equation}\label{ch4:rhostar}
\rho(p_h) \longrightarrow \rho^\star \text{ weakly in } L^2(Q_T).
\end{equation}
Using the fact that $\rho'(.)$ is bounded, we have
\begin{align*}
\|\nabla_h\rho(p_h)\|_{L^2(Q_T)}^2 &= \sn \Delta t \,\sum_{\sigma_{K,L}\in E}\tau_{K,L} | \rho(p^{n+1}_{L}))-\rho(p^{n+1}_{K}))|^2\\
& \le  \sn \Delta t \,\sum_{\sigma_{K,L}\in E}\tau_{K,L} |\rho'(p_{K,L}) (p^{n+1}_{L}-p^{n+1}_{K})|^2,
\end{align*}
and using the estimate \eqref{ch4:cvhs}, we deduce that $\nabla_h\rho(p_h)$ is bounded in $L^2(Q_T)$ and converges weakly to a function $\xi$ in $L^2(\Omega)$; and from \eqref{ch4:rhostar} we deduce that $\xi=\nabla \rho^\star$ weakly.

Recall that
$$
\beta(s_h)= \sn \sum_K \beta(s_K^{n+1})1_{K\times]t^n,t^{n+1}]} \longrightarrow \beta(s) \text{ strongly in } L^2(Q_T).
$$
Let us show for $\overline{\beta}(s_h)= \sn\sum_{\sigma_{K,L}\in E} \beta_{K,L}^{n+1}1_{T_{K,L}\times]t^n,t^{n+1}]}$,\\
 where $\beta_{K,L}^{n+1}=\frac{\beta(s_L^{n+1})+\beta(s_K^{n+1})}{2}$, that
$$
\overline{\beta}(s_h) -\beta(s_h) \longrightarrow 0 \text{ strongly in } L^2(Q_T)
$$
In fact,
\begin{align}
&\|\overline{\beta}(s_h) -\beta(s_h) \|^2_{L^2(Q_T)} \notag\\
&= \sn \Delta t \,\sum_{\sigma_{K,L}\in E}\big( | T_{K,L}\cap K| | \beta_{K,L}^{n+1}-\beta_{K}^{n+1}|^2 + | T_{K,L}\cap L| | \beta_{K,L}^{n+1}-\beta_{L}^{n+1}|^2\big) \notag\\
&\le  \frac 1 2 \sn \Delta t \,\sum_{\sigma_{K,L}\in E}  |T_{K,L}|(|\beta_{K}^{n+1}-\beta_{L}^{n+1}|^2),
\end{align}
and from estimate \eqref{ch4:est:grad-norm1}, there exists a positive constant $C$ such that
$$
|\overline{\beta}(s_h) -\beta(s_h) |^2_{L^2(Q_T)}  \le C h^2
$$
which establish the desired limit.
Then, the convergences \eqref{ch4:gradrhoh}, leads to
\begin{equation}\label{ch4:D_4}
\lim_{h\to 0}D_4^h = \int_0^T\int_\O \beta(s)\nabla \rho^\star\cdot\nabla \varphi\;dxdt.
\end{equation}
Finally, let us show $C_2^h-D_2^h\to 0$. We have
$$
C_2^h-D_2^h =\sn \Delta t \,
\sum_{\sigma_{K,L}} \tau_K,L(\tilde{\rho}^{n+1}_{K,L}
\big(\beta(s^{n+1}_{L})-\beta(s^{n+1}_{K})\big)\big(\varphi(t^{n+1},x_L) - \varphi(t^{n+1},x_K)\big),
$$
where $ \tilde{\rho}^{n+1}_{K,L} =  \rho^{n+1}_{K,L}- \overline{\rho}^{n+1}_{K,L}$.
This expression can be rewritten as
\begin{equation}\label{ch4:c2d2}
C_2^h-D_2^h = \int_{Q_T}\tilde{\rho}(p_h)\nabla_h\beta (s_h)\cdot (\nabla \phi)_h\;dxdt
\end{equation}
Let us show that $\tilde{\rho}(p_h)\to 0$ strongly in $L^2(Q_T)$.

We have
$$
\tilde{\rho}^{n+1}_{K,L} =  \rho^{n+1}_{K,L}- \overline{\rho}^{n+1}_{K,L} = \frac{1}{p_L^{n+1}-p_K^{n+1}}\int_{p_K^{n+1}}^{p_L^{n+1}} \rho(\psi)\,d\psi- \frac{\rho(p_K^{n+1})+\rho(p_L^{n+1})}{2},
$$
and from hypothesis  (H\ref{ch4:hyp:H6}), the function $\rho$ is monotone and uniformly Lipschitz, then there exists a positive such that
$$
\tilde{\rho}^{n+1}_{K,L} \le C |p_L^{n+1}-p_K^{n+1}|.
$$
So,
\begin{align}
\|\tilde{\rho}(p_h) \|^2_{L^2(Q_T)} &= \sn \Delta t \,\sum_{\sigma_{K,L}\in E}|T_{K,L}| | \tilde{\rho}^{n+1}_{K,L}|^2 \\
&\le  C\sn \Delta t \,\sum_{\sigma_{K,L}\in E}  |T_{K,L}|(|p_L^{n+1}-p_K^{n+1}|^2),
\end{align}
using \eqref{ch4:est:grad-norm}, we deduce
$$
\|\tilde{\rho}(p_h) \|^2_{L^2(Q_T)} \le C h^2,
$$
which goes to zero when $h$ goes to zero. This convergence combined with the weak convergence \eqref{ch4:cvhs} and the strong convergence of $(\nabla \phi)_h$  in $L^\infty(Q_T)$ shows that
$$
C_2^h-D_2^h \longrightarrow 0, \text{ when } h\to 0.
$$

\end{proof}

\begin{remark}\label{ch4:remarqueK}

In the case where ${\bf K}(x)$, the permeability tensor of the porous medium  at a point $x$, considered to be
$${\bf K}(x)=k(x)\mathcal{I}_d$$
 where $k$ is a scalar bounded  function of the space, $k(x)\ge k_0>0$ and $\mathcal{I}_d$  is the identity matrix. 
  The main part is the approximation of the dissipative terms (capillary terms) on each interface $\sigma_{K,L}$ as follows:

 Denote by,
 \begin{align*}
  k_{K}=\frac{1}{\abs{K}}\int_K k(x) \,dx.
%  &k_{K,L}=\Big|\frac{1}{\abs{K}}\int_K k(x) \,dx \,\, \eta_{K,L} \Big|.
  \end{align*}
  Now, we consider the following approximation
 \begin{align*}
 \int_{\sigma_{K,L}}k(x)\rho(p)\nabla \beta(s) \cdot \eta_{K,L}\, d\gamma &\approx
d^{*}_{K,L} \rho_{K,L} \frac{\abs{\sigma_{K,L}}}{d_{K,L}}(\beta(s_{L})-\beta(s_{K}))
\end{align*}

\begin{align*}
\int_{\sigma_{K,L}}k(x)\rho(p) M_1(s) \nabla p \cdot \eta_K \, d\gamma &\approx
d^{*}_{K,L} \rho_{K,L}\big(-M_1(s_L)(dp_{K,L})^{+} + M_1(s_K)(dp_{K,L})^-  \big)
\end{align*}

\begin{align*}
\int_{\sigma_{K,L}}k(x)\rho^2(p)M_1(s){\bf g}\cdot \eta_K\, d\gamma &\approx
d^{*}_{K,L} \Big(\rho^2(p^{n+1}_K) M_1(s^{n+1}_K){\bf g}_{K,L}
- \rho^2(p^{n+1}_L) M_1(s^{n+1}_L){\bf g}_{L,K}\Big)
\end{align*}
where,
 \begin{equation}\label{ch4:dharm}
 d^{*}_{K,L}=\frac{k_{K,L}\, k_{L,K}}{d(x_K,\sigma_{K,L})k_{K,L}+d(x_L,\sigma_{K,L})k_{L,K}} \,d(x_K,x_L),
 \end{equation}
 $$
dp_{K,L}= \frac{\abs{\sigma_{K,L}}}{d_{K,L}}\big( p_L-p_K   \big)=(dp_{K,L})^+ -  (dp_{K,L})^-,
$$
$${\bf g}_{K,L}:=\int_{K/L}({\bf g}\cdot \eta_{K,L})^+\, d\sigma=\int_{K/L}({\bf g}\cdot \eta_{L,K})^-\, d\gamma(x)
$$
and,
$$
\rho_{K,L}=
\begin{cases}\displaystyle
\frac{1}{p_L^{}-p_K^{}}\int_{p_K^{}}^{p_L^{}} \rho(\xi)\, d\xi & \text{ if } p_L^{}-p_K^{}\ne 0\\
\rho(p_K^{}) & \text{ otherwise }
\end{cases}
$$
In \eqref{ch4:dharm}, we take the harmonic average on the interfaces in order to ensure the conservation of numerical fluxes. 

\end{remark}

%%

% iciciciic

\nocite{*}

\end{document}